\numberwithin{equation}{section}  
\def\b{\boldsymbol}
\def\R{\mathbb{R}}
\def\P{\mathbb{P}}
\def\E{\mathbb{E}}
\def\cL{\mathcal{L}}
\def\cS{\mathcal{S}}
\newtheorem{theorem}{Theorem}[section]
\newtheorem{lemma}{Lemma}[section]
\newtheorem{proposition}{Proposition}[section]
\newtheorem{remark}{Remark}[section]
\newtheorem{corollary}{Corollary}[section]
\newtheorem{assumption}{Assumption}[section]
\title{Ergodicity and error estimate of laws for a random splitting Langevin Monte Carlo}
\date{}
\begin{document}

\author[a,b]{
Lei Li \thanks{Email: leili2010@sjtu.edu.cn}
}
\author[a]{
Chen Wang \thanks{Email: wangchen6326@sjtu.edu.cn}
}
\author[a,b]{
Mengchao Wang \thanks{Email: mc666@sjtu.edu.cn}
}

\affil[a]{School of Mathematical Sciences, MOE-LSC, Shanghai Jiao Tong University, Shanghai 200240, China
}
\affil[b]{Institute of Natural Sciences, Shanghai Jiao Tong University, Shanghai 200240, China}

\maketitle
\begin{abstract}
The random splitting Langevin Monte Carlo could mitigate the first order bias in Langevin Monte Carlo with little extra work compared other high order schemes. We develop in this work an analysis framework for the sampling error under Wasserstein distance regarding the random splitting Langevin Monte Carlo. First, the sharp local truncation error is obtained by the relative entropy approach together with the explicit formulas for the commutator of related semi-groups. The necessary pointwise estimates of the gradient and Hessian of the logarithmic density are established by the Bernstein type approach in PDE theory. Second, the geometric ergodicity is established by accommodation of the reflection coupling. Combining the ergodicity with the local error estimate, we establish a uniform-in-time sampling error bound, showing that the invariant measure of the method approximates the true Gibbs distribution with $O(\tau^2)$ accuracy where $\tau$ is the time step. 
Lastly, we perform numerical experiments to validate the theoretical results. 
\end{abstract}

\section{Introduction}

Sampling is a fundamental challenge in  Bayesian statistics \cite{gelman1995bayesian,andrieu2003introduction,jing2024machine}, statistical physics and computational chemistry \cite{allen2017computer,frenkel2023understanding}, as well as generative AI's \cite{rezende2015variational,songscore,lipmanflow}. The most routinely used approach is the class of Markov chain Monte Carlo (MCMC) methods, consisting of the classical Metropolis-Hastings algorithm \cite{metropolis1953equation,hastings1970monte}, the Hamitlonian Monte Carlo \cite{neal2011mcmc,robert2004monte} and its various generalizations \cite{nishimura2020discontinuous,li2023splitting,li2025generalized}, and the samplers based on Langevin dynamics \cite{rossky1978brownian,roberts1996exponential,leimkuhler2015molecular,li2025second}.
Choosing the suitable proposal distribution is tricky in the Metropolis-Hastings algorithm, so the Hamitonina Monte Carlo and the samplers based on Langevin dynamcis are preferred for sampling from high-dimensional probability distributions. The overdampled Langevin diffusion is a continuous dynamics that leaves the Gibbs distribution \(\rho_{*}(x) \propto e^{-\beta U(x)}\) invariant  
\begin{equation}\label{eq:SDE}
	dX_t = -\nabla U(X_t)\,dt + \sqrt{2\beta^{-1}}\,dW_t,
\end{equation}
where \(U: \mathbb{R}^d \to \mathbb{R}\) is the potential, \(\beta > 0\) is the inverse temperature, and \(W_t\) is a \(d\)-dimensional Brownian motion. The density \(\rho(t,x)\) satisfies the Fokker-Planck equation  
\begin{equation}\label{eq:FPE}
	\partial_t \rho = \nabla \cdot (\nabla U\,\rho) + \beta^{-1} \Delta \rho.
\end{equation}
Using the Fokker-Planck equation, it is relatively easy to verify that the Gibbs distribution \(\rho_{*}(x) \propto e^{-\beta U(x)}\) is invariant for the dynamics. Under mild regularity conditions, the law of \(X_t\) converges to the invariant Gibbs distribution. A common discretization of \eqref{eq:SDE} is the Euler-Maruyama scheme, leading to the Langevin Monte Carlo (LMC) algorithm \cite{rossky1978brownian,roberts1996exponential}:
\begin{equation}\label{eq:Euler}
	X_{n+1} = X_n - \eta_n \nabla U(X_n) + \sqrt{2\beta^{-1}\,\eta_n}\,\Delta W_n.
\end{equation}
However, this scheme generally incurs a first-order bias in the invariant distribution \cite{talay1990expansion,durmus2015quantitative,wu2022minimax}. To mitigate this bias and improve long-time accuracy, a number of strategies have been proposed.

One classical approach is to utilize variable vanishing stepsizes. A typical scheduling is to choose $\eta_n$ such that $\sum_n \eta_n=\infty$ and $\sum_n \eta_n^2<\infty$, known as the Robbins--Monro condition from stochastic approximation \cite{robbins1951stochastic,kushner2003stochastic,teh2016consistency}.
Another well-known remedy is to append a Metropolis--Hastings correction. The Metropolis-adjusted Langevin algorithm (MALA) \cite{roberts1996exponential,xifara2014langevin,chewi2021optimal} achieves asymptotically exact sampling and has been extensively studied for its mixing and scaling properties \cite{atchade2006adaptive,roberts1998optimal}. However, the accept-reject step incurs computational overhead and complicates parallelization, motivating interest in alternatives that attain higher accuracy without rejection.

Alternatively, one may rely on some higher-order schemes for the SDEs. The Lie--Trotter and Strang splittings have been applied to reduce the bias and improve the weak accuracy \cite{ninomiya2008weak,ninomiya2009new,leimkuhler2013rational,hartmann2016molecular}. Higher-order compositions can improve invariant-measure accuracy \cite{abdulle2015long,bernton2018langevin}. Complementary estimator-level techniques such as multilevel Monte Carlo have also been studied by Anderson et al.~\cite{anderson2012multilevel}, which can further reduce bias and variance and naturally combine with higher-order weak schemes.  While such methods can reduce bias to $O(\tau^2)$ or beyond, most of them (except few examples like the one in \cite{leimkuhler2013rational}) may limit robustness in complex or high-dimensional settings and suffer from high cost as each iteration needs several evaluations of gradients.

A simple strategy to improve the sampling error while keeping the simplicity is to apply the Lie--Trotter splitting in a random order (see a related work in \cite{cho2024doubling} for the Schr\"odinger operators). Intuitively, the first order bias could cancel in the long time simulations due to the law of large numbers as illustrated in the analysis of such methods for PDE simulations \cite{li2025convergence}. 
Compared to the MALA approach or the higher order schemes, such strategy is a slight modification of the classical Langevin Monte Carlo and thus enjoys better robustness and low cost.  
A separate line of randomization introduces stochasticity to improve scalability and efficiency for Bayesian inference with large datasets \cite{welling2011bayesian,raginsky2017non} and for interacting particle systems \cite{jin2020random,li2020random}. 
The combination of such randomization with the Hamiltonian dynamics and the Langevin dynamics has turned to be useful in improving the efficiency of sampling for many-body problems \cite{li2020random,liang2022random,li2025generalized}, with the price of some acceptable systematic errors.


While the fact that the randomized splitting could improve accuracy is intuitively clear, the rigorous analysis of its long-time behavior and the error bound, in particular under Wasserstein distances, have not been systematically established. Our focus in this paper goes to the simplest random Lie--Trottter splitting method, named as randomized splitting Langevin Monte Carlo (RSLMC) method, where drift and diffusion substeps are applied in random order at each iteration (see section \ref{subsec:rslmc} for details). Our goal in this paper is to propose an analysis framework for the quantitative error bound for RSLMC under Wasserstein distance.  We remark that the quantitative error analysis for laws of numerical schemes of SDEs and sampling methods under certain metrics (e.g., Wasserstein distances, total variation distance) becomes a central topic in recent years. Compared to the traditional weak error analysis where the bound depends on the particular choice of smooth test function \cite{milstein2004stochastic,wang2024weak}, such error estimates are applicable to the error bounds for a class of nonsmooth test functions \cite{li2024estimates,durmus2019nonasymptotic,vempala2019rapid}.  The relative entropy (i.e.\ Kullback--Leibler divergence) has emerged as a particularly powerful tool for quantifying these error bounds. Early works such as \cite{cheng2018convergence,dalalyan2019user,durmus2019high} established non-asymptotic convergence bounds for LMC-type methods, clarifying the dependence of approximation error on the step size and the ambient dimension.  More recent advances include \cite{parulekar2025efficient} where the annealed Langevin Monte Carlo with KL guarantee for posterior sampling has been proposed, and \cite{zhang2025analysis} where the midpoint discretization has been analyzed via anticipative Girsanov techniques. It is noted that the relative entropy has recently been proved useful to obtain the sharp error bounds using some PDE techniques and eventually yield bounds under the Wasserstein distances using the transport inequalities. The  $O(\tau^2)$ bound of relative entropy for the Langevin Monte Carlo with additive noise has been obtained in Mou et al.~\cite{mou2022improved}, and the same bound has been established for the SGLD in~\cite{li2022sharp}. By combining the Malliavin calculus, the same bound of relative entropy has recently been proved in \cite{li2024estimates} for the Euler discretization of SDEs with multiplicative noise.

Our contribution in this work can be summarized as follows. We first establish the sharp error bounds for  RSLMC using the relative entropy for finite time horizon inspired by the previous works \cite{mou2022improved,li2022sharp,li2024estimates} while the necessary pointwise estimates of the gradient and Hessian of the logarithmic density are established by the Bernstein type approach in PDE theory. Secondly, we establish the ergodicity of  RSLMC under the Wasserstein-1 distance using the reflection coupling
following \cite{eberle2016reflection,li2024geometric}. Lastly, combining the sharp estimate for finite time and the ergodicity, we are able to the obtain the uniform-in-time sharp error estimate for the method and thus the sharp sampling error for  RSLMC: there exists a unique invariant distribution $\bar{\rho}_*^\tau$ of RSLMC, and it satisfies 
\[
W_1(\bar{\rho}_*^\tau, \rho_*) \le C\, \tau^2,
\]
where $\rho_*$ is the desired Gibbs measure and $W_1$ is the Wasserstein-1 distance.  Note that the pointwise estimates of gradients and Hessian of the logarithmic numerical density serve primarily as technical tools in our proofs but may be of independent interest. The analysis framework in this paper could be possibly generalized to other similar sampling methods to improve the existing bounds in literature like the implicit Langevin Monte Carlo, the midpoint method and other high order splitting schemes.

The remainder of the paper is organized as follows. Section~\ref{sec:setup} introduces the RSLMC algorithm and the main assumptions. Section~\ref{sec:ergodicity} establishes ergodicity and exponential convergence. Section~\ref{sec:error-estimate} presents finite-time relative entropy bounds, supported by PDE-based density estimates. Section~\ref{sec:sampling-error} combines local error analysis with ergodicity to derive uniform-in-time error bounds. Section~\ref{sec:experiments} reports numerical experiments validating the theoretical insights. Technical proofs and auxiliary results are deferred to the appendices.

\section{Setup and some basic results}\label{sec:setup}

As mentioned above, the Langevin Monte Carlo introduces a first order bias of the order $O(\tau)$ in the invariant distribution. 

As mentioned, we propose a \textit{random splitting method} that alternates between the drift ($\dot{X} = -\nabla U(X)$) and diffusion ($dX = \sqrt{2\beta^{-1}}dW$) steps in random orders. This randomization cancels low-order bias terms asymptotically, enabling higher-order accuracy. Note that the random splitting approach can be generalized to general splitting with $p$ terms for $p\ge 2$, and can be applied to other types of SDEs like the underdamped Langevin equation.
In this work, we stick to this simple setting and propose an analysis framework for the error estimate. Our goal is to show that the splitting method has a second order sampling accuracy under some metric. 
In particular, under the Wasserstein distances. The multiplicative noise case can be generalized to as well, but is much more involved.

To improve the order one may apply some second order schemes, for example, the symmetrized Trotter splitting can achieve $O(\tau^2)$ accuracy but it requires evaluation of dynamics for multiple times in  one iteration.

\subsection{The random splitting Langevin Monte Carlo}\label{subsec:rslmc}

As mentioned, for the random splitting Langevin Monte Carlo, we split the SDE into an ordinary differential equation (ODE) and a pure Brownian diffusion, namely
\begin{gather}\label{eq:splitODE}
\dot{X}=-\nabla U(X)
\end{gather}
and
\begin{gather}\label{eq:splitBM}
dX=\sqrt{2\beta^{-1}}\,dW.
\end{gather}
The random splitting approach is to solve these two parts consecutively in a random order for each iteration. The ODE part can be solved
using any standard method that is at least second order, like the Runge-Kutta method of second order accuracy. Let such a solver with initial state $x$ be denoted by
\begin{gather}
X(\tau)=: S(x, \tau).
\end{gather}
The diffusion part is 
to add simply a standard normal variable to the process. The detailed method is given in Algorithm \ref{alg:randomsplit}.
\begin{algorithm}[!ht]
	\caption{The random splitting Langevin Monte Carlo}
	\label{alg:randomsplit}
	\begin{algorithmic}[1]
		\REQUIRE 
		Time step $\tau>0$ , terminal time $T>0$. 
	\STATE Generate initial data $X_0$ from some initial distribution.
		
	\FOR{$n=0: \lceil T/\tau\rceil-1$}
	\STATE Generate a random number $\zeta^{n}\sim \mathrm{U}[0, 1]$, and $Z_n\sim \mathcal{N}(0, I)$.
	\IF{$\zeta^n\le 1/2$}
	\STATE
	\begin{gather}
		\bar{X}_{n+1}=S(X_{n}, \tau),\quad X_{n+1}=\bar{X}_{n+1}+\sqrt{2\beta^{-1}\tau}Z_n.
	\end{gather}
	\ELSE
	\STATE 
	\begin{gather}
	\bar{X}_{n+1}=X_n+\sqrt{2\beta^{-1}\tau}Z_n, 
	\quad X_{n+1}=S(\bar{X}_{n+1}, \tau).
	\end{gather}
	\ENDIF
	\ENDFOR
	\end{algorithmic}
\end{algorithm}

Below, we will assume that $S(x, \tau)$ is the accurate ODE evolution. If we replace with the discretization, the global error will be controlled simply using the triangle inequality.  
For notational convenience, we denote
\begin{gather}
b(x):=-\nabla U(x),
\end{gather}
and the density evolution for the ODE is then given by
\begin{gather}\label{eq:FP}
\partial_t\rho+\nabla\cdot(b(x)\rho)=0.
\end{gather}
Clearly, the operator corresponding to the ODE part can be expressed as 
\begin{gather}\label{eq:operaL1}
\cL_{1}(\rho)=-\nabla\cdot(b(x)\rho).
\end{gather}
Similarly, the density evolution for the diffusion step is
\begin{gather}\label{eq:operaL2}
\partial_t\rho=\beta^{-1}\Delta\rho=:\cL_{2}(\rho).
\end{gather}
The Fokker-Planck operator for the Langevin equation is clearly
\begin{gather}
\cL\rho=\cL_{1}(\rho)+\cL_{2}(\rho)=-\nabla\cdot(b(x)\rho)+\beta^{-1}\Delta\rho.
\end{gather}

To estimate the local truncation error, we need the expressions of the semigroup. Let $\phi_t(x)$ denote the flow map for the ODE, which is the trajectory of the ODE with initial position $x$:
\begin{gather}
\partial_t\phi_t(x)=b(\phi_t(x)),\quad \phi_0(x)=x.
\end{gather} 
Moreover, we introduce the determinant of the Jacobi matrix
\begin{gather}
J(t, x):=\det(\nabla_x \phi_t(x)).
\end{gather}
Along the ODE, it is easy to see that 
\begin{gather}
\frac{\partial}{\partial t}\nabla_x\phi_t(x)=\nabla_x\phi_t(x)\cdot \nabla b(\phi_t(x)),
\end{gather}
where we use the convention that
\[
(\nabla b)_{ij}=\partial_i b_j(x).
\]
It then follows that
\begin{gather}\label{eq:Jacobieq}
\partial_t J(t, x)=\text{tr}( (\nabla_x\phi)^{-1} \partial_t(\nabla_x\phi)) J(t,x)=(\nabla\cdot b)(\phi_t(x)) J(t, x).
\end{gather}

We note the following simple fact.
\begin{lemma}\label{lem:evolution-opera}
Let $\rho$ denote the probability density function associated with the SDE \eqref{eq:SDE}. With the operators $\mathcal{L}_1$ and $\mathcal{L}_2$ defined in \eqref{eq:operaL1} and \eqref{eq:operaL2}, respectively, then the semigroups are given as follows:
\begin{gather}
 e^{t\cL_{1}} \rho (x) = \rho ( \phi_{-t} (x) ) J(-t,x),
\end{gather}
and that
\begin{gather}
 e^{t\cL_{2}} \rho (x) =\int_{\R^d} \frac{1}{(4 \pi \beta^{-1} t)^{\frac{d}{2}}}
e^{-\frac{\beta |x-y|^2}{4  t} } \rho (y) dy.
\end{gather}
\end{lemma}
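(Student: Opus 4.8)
The plan is to treat the two semigroups separately, in each case checking that the proposed formula solves the corresponding linear evolution equation with the correct initial datum and then invoking uniqueness in the relevant function class.

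For $e^{t\cL_2}$, the equation $\partial_t\rho=\beta^{-1}\Delta\rho$ is the heat equation with diffusivity $\beta^{-1}$. Its fundamental solution is the Gaussian kernel $G_t(x,y)=(4\pi\beta^{-1}t)^{-d/2}\exp\bigl(-\beta|x-y|^2/(4t)\bigr)$, which one verifies directly solves the equation in $(t,x)$ for $t>0$ and converges to $\delta_y$ as $t\downarrow 0$. Convolving $G_t$ against the initial density and using uniqueness of solutions of the heat equation in the class of densities considered here (valid under the integrability/growth conditions carried throughout the paper) gives the stated integral representation.

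For $e^{t\cL_1}$, I would use the method of characteristics for the continuity (Liouville) equation $\partial_t\rho+\nabla\cdot(b\rho)=0$. The cleanest route is the measure-theoretic one: the solution is the pushforward $\rho_t\,dx=(\phi_t)_{\#}(\rho\,dx)$, so for any test function $f$ one has $\int f(x)\rho_t(x)\,dx=\int f(\phi_t(y))\rho(y)\,dy$; the change of variables $x=\phi_t(y)$, i.e.\ $y=\phi_{-t}(x)$ with $dy=|\det\nabla_x\phi_{-t}(x)|\,dx$, yields $\rho_t(x)=\rho(\phi_{-t}(x))\,|J(-t,x)|$. Solving the linear ODE \eqref{eq:Jacobieq} gives $J(s,x)=\exp\bigl(\int_0^s(\nabla\cdot b)(\phi_r(x))\,dr\bigr)>0$, so the absolute value can be dropped and the claimed identity follows. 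Alternatively, one can verify the formula by hand: setting $u(t,x):=\rho(\phi_{-t}(x))J(-t,x)$, we have $u(0,\cdot)=\rho$ because $\phi_0=\mathrm{id}$ and $J(0,\cdot)=1$, and differentiating in $t$ using $\partial_t\phi_{-t}(x)=-b(\phi_{-t}(x))$ together with $\partial_t J(-t,x)=-(\nabla\cdot b)(\phi_{-t}(x))\,J(-t,x)$ from \eqref{eq:Jacobieq}, the chain rule shows $u$ solves $\partial_t u=-\nabla\cdot(bu)$; uniqueness then concludes.

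The computations are routine and there is no substantial obstacle here; the only points requiring a little care are the sign/orientation of the Jacobian (so that $|J|=J$), which is handled by the explicit exponential formula above, and invoking an appropriate uniqueness statement for each linear PDE in the function class under consideration — both standard given the smoothness and growth control on $b=-\nabla U$ assumed in the paper.
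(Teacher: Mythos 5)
Your proof is correct and follows essentially the same route as the paper's: the heat-kernel representation for $e^{t\cL_2}$ is identical, and for $e^{t\cL_1}$ your pushforward/change-of-variables argument is the weak formulation of the paper's integration along characteristics, with the issue of the sign of the Jacobian (which the paper handles via the cocycle identity $J(-t,\phi_t(x))J(t,x)=1$) settled by the explicit exponential formula $J(s,x)=\exp\bigl(\int_0^s(\nabla\cdot b)(\phi_r(x))\,dr\bigr)>0$. No gaps.
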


\begin{proof}
Consider the semigroup $e^{t\cL_{1}}$. Denote $\varrho(x, t)=e^{t\cL_{1}}\rho$. Then,
\begin{gather}
\partial_t\varrho+b\cdot\nabla \varrho=-(\nabla\cdot b)\varrho.
\end{gather}
Then, along the characteristics, it is easy to see that
\begin{gather}
\varrho(\phi_t(x), t)=\varrho(x, 0)\exp\left( -\int_0^t \nabla\cdot b(\phi_s(x))\,ds\right).
\end{gather}
By \eqref{eq:Jacobieq}, one has
\[
(e^{t\cL_{1}}\rho)(\phi_t(x))=\rho(x)J(t, x)^{-1}.
\]
Using the property of evolution group,
\[
\phi_{-t}(\phi_t(x))=x \quad \Longleftrightarrow  \quad J(-t, \phi_t(x)) J(t, x)=1.
\]
It then follows that
\begin{gather}
(e^{t\cL_{1}}\rho)(\phi_t(x))=\rho(x)J(-t, \phi_t(x)).
\end{gather}
The expression for the first semigroup then follows.

Since $\cL_{2}$ generates the standard heat semigroup on $\mathbb{R}^d$, its action is explicitly given by convolution with the Gaussian heat kernel:
\begin{gather}
 e^{t\cL_{2}} \rho (x) =\int_{\R^d} \frac{1}{(4 \pi \beta^{-1} t)^{\frac{d}{2}}}
e^{-\frac{\beta |x-y|^2}{4  t} } \rho (y) dy.
\end{gather}
This representation follows directly from the Fourier transform solution to the heat equation $\partial_t \rho = \beta^{-1}\Delta \rho$ with initial data $\rho(0,\cdot)=\rho(\cdot)$. We therefore obtain the result stated in the Lemma \ref{lem:evolution-opera}.
\end{proof}

For a given random variable $\zeta$, there is a corresponding random permutation $\xi$ of $\{(1,2),(2,1)\}$. The probability of each permutation is $1/2$. The density evolution is then given by
\begin{gather}\label{eq:rhoxi}
\rho_{n+1}^{\xi}=e^{\tau \cL_{\xi(2)}}e^{\tau \cL_{\xi(1)}}\rho_{n}^{\xi},
\end{gather}
where the operators are applied in random order at each step.
The law of $X_{n+1}$ is then
\[
\bar{\rho}_{n+1}=\E_{\xi}\rho_{n+1}^{\xi}.
\]
Here, we consider two versions of the time-continuous density. The first is
\begin{gather}\label{eq:timeinterpolation0}
\rho^{\xi}(t)=e^{(t-n\tau)\cL_{\xi(1)}}e^{(t-n\tau)\cL_{\xi(2)}}\rho_n^{\xi},
\quad t\in [n\tau, (n+1)\tau],
\end{gather}
while the second is 
\begin{gather}\label{eq:timeinterpolation}
\rho^{\xi_n}(t)=e^{(t-n\tau)\cL_{\xi(1)}}e^{(t-n\tau)\cL_{\xi(2)}}\bar{\rho}_n,
\quad t\in [n\tau, (n+1)\tau].
\end{gather}
By the linearity of the operators, one clearly has
\[
\rho^{\xi_n}(t)=\E_{\xi_1,\cdots, \xi_{n-1}}[\rho^{\xi}(t)|\xi_n].
\]
That means the average is taken by considering all the random permutations for $t_i, i\le n-1$.
We then define
\begin{gather}\label{eq:barrhot}
\bar{\rho}(t)=\mathbb{E}_{\xi_n}\rho^{\xi_n}(t)=\E_{\xi}\rho^{\xi}(t).
\end{gather}

Since $\bar{\rho}(t_n)=\bar{\rho}_n$ which is the law of $X_n$.
The key quantity of interest is the approximation error between $\bar{\rho}(t)$ and the law of the original Langevin dynamics $\rho$.
To quantify the approximation error, we use the 
relative entropy (also Kullback-Leibler (KL) divergence). The relative entropy between two probability measures $\mu,\nu$ are defined by
\begin{gather}\label{eq:kl_divergence}
	\mathcal{H}(\mu\,\|\,\nu)
	:=\begin{cases}
		\displaystyle \int \log\!\Big(\frac{d\mu}{d\nu}\Big)\,d\mu, & \mu \ll \nu,\\[0.8ex]
		+\infty, & \text{otherwise},
	\end{cases}
\end{gather}
where $\tfrac{d\mu}{d\nu}$ denotes the Radon--Nikodym derivative, which rigorously captures both mass displacement and tail behavior discrepancies.
In particular, if $\mu$ and $\nu$ admit densities $p,q$ with respect to Lebesgue measure, then
\[
\mathcal{H}(\mu\|\nu) \;=\; \int_{\R^d} p(x)\,\log\!\frac{p(x)}{q(x)}\,dx.
\]
It is known that the relative entropy has the following properties (see \cite{li2024geometric,thomas2006elements})
\begin{enumerate}
\item \emph{Nonnegativity:} $\mathcal{H}(\mu\|\nu)\ge0$, with equality if and only if $\mu=\nu$.
\item \emph{Joint convexity:} The map $(\mu,\nu)\mapsto \mathcal{H}(\mu\|\nu)$ is jointly convex.  
\item \emph{Lower semicontinuity:} For fixed $\nu$, the map $\mu\mapsto\mathcal{H}(\mu\|\nu)$ is lower semicontinuous with respect to weak convergence of measures.
\end{enumerate}
The relative entropy is useful because it can be used to control the Wasserstein distances through the transport inequalities.
We recall that the Wasserstein distances between two probability measures $\mu,\nu\in \mathcal{P}(\R^d)$ are defined by
\[
W_p(\mu,\nu) := \inf_{\pi \in \Gamma(\mu,\nu)}
\left( \int_{\R^d \times \R^d} |x-y|^p \, d\pi(x,y) \right)^{1/p},\quad p\ge 1,
\]
where $\Gamma(\mu,\nu)$ denotes the set of all couplings of $\mu$ and $\nu$.

\begin{enumerate}
		\item[\textnormal{(T\textsubscript{1})}] If $\nu$ satisfies sub-Gaussian concentration for all 1-Lipschitz functions,
        then there exists $C>0$ such that
		\[
		W_1(\mu,\nu) \;\le\; \sqrt{\,2C\,\mathcal{H}(\mu\|\nu)}.
		\]
		\item[\textnormal{(T\textsubscript{2})}] If $\nu$ satisfies a logarithmic Sobolev inequality with constant $C$, then
		\[
		W_2(\mu,\nu) \;\le\; \sqrt{2C\,\mathcal{H}(\mu\|\nu)}
		\]
	\end{enumerate}
	Moreover, since one has $W_1\le W_2$, hence \(T_2(C)\Rightarrow T_1(C)\).

Our goal is to prove error bounds of the form $\mathcal{H}(\bar{\rho}(t) \| \rho(t)) \leq C(T)\tau^4$ under appropriate assumptions.
Later, we will obtain a uniform subGaussian estimate for $\rho(t)$ so that $\rho(t)$ satisfies a uniform \(T_1(C_T)\) inequality. Then, the bound on relative entropy can give a sharp bound on the Wasserstein-1 distance of the order $O(\tau^2)$.





\subsection{Some assumptions and basic results}
We begin by specifying the regularity assumptions on both the  potential function $U(x)$ and the initial distribution  required for our analysis. The main result of this subsection establishes the uniform-in-time polynomial bounds on the gradient and Hessian of the log-density, which play a pivotal role for the uniform-in-time error bound.

We will work under the following Lipschitz assumptions, which are standard for Langevin Monte Carlo analysis.
\begin{assumption}\label{ass:drift}
The potential function $U:\R^d\to \R$ is $C^4$ and there exists $M>0$ such that when $|x|\ge M$, $\nabla^2 U\succeq \kappa I$ for some $\kappa>0$. Moreover,  $b(x)=-\nabla U(x)$ is Lipschitz in the sense that
\[
|b(x) - b(y)| \leq L|x - y|, \quad \forall x, y \in \mathbb{R}^d,
\] 
and the second order and third order derivatives of $b$ are bounded.
\end{assumption}

By the assumption, one has the following simple observation.
\begin{lemma}\label{lmm:onvexity}
Under Assumption \ref{ass:drift}, the drift $b$ has at most linear growth $|b(x)| \leq C(1 + |x|)$ for some constant $C > 0$, and there exists $R>0$
such that whenever $|x-y|\ge R$,
\begin{gather*}
(x-y)\cdot (b(x)-b(y))\le -\lambda |x-y|^2\quad \Longleftrightarrow \quad (x-y)\cdot (\nabla U(x)-\nabla U(y))\ge \lambda |x-y|^2.
\end{gather*}
\end{lemma}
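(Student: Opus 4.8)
The plan is to treat the two assertions separately, both being elementary consequences of Assumption~\ref{ass:drift}. The at-most-linear growth is immediate from the global Lipschitz bound: $|b(x)| \le |b(0)| + |b(x)-b(0)| \le |b(0)| + L|x| \le C(1+|x|)$ with $C := \max\{|b(0)|,L\}$. The stated equivalence in the second display is trivial since $b = -\nabla U$, so $(x-y)\cdot(b(x)-b(y)) = -(x-y)\cdot(\nabla U(x)-\nabla U(y))$; hence it remains only to prove the dissipativity estimate for $b$.

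For that, I would integrate along the segment $z_t := y + t(x-y)$, $t\in[0,1]$. With the convention $(\nabla b)_{ij}=\partial_i b_j$, the fundamental theorem of calculus gives $b(x)-b(y) = \int_0^1 (\nabla b)(z_t)^{\top}(x-y)\,dt$, whence
\[
(x-y)\cdot(b(x)-b(y)) = \int_0^1 (x-y)^{\top}(\nabla b)(z_t)\,(x-y)\,dt .
\]
Now $\nabla b = -\nabla^2 U$ is symmetric; by Assumption~\ref{ass:drift} it satisfies $\nabla b(z) \preceq -\kappa I$ whenever $|z|\ge M$, while the global Lipschitz property gives $\|\nabla b(z)\|_{\mathrm{op}} \le L$ everywhere. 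Consequently the integrand is always $\le L|x-y|^2$ and is $\le -\kappa|x-y|^2$ on the set $\{t : |z_t|\ge M\}$. Writing $I := \{t\in[0,1] : |z_t| < M\}$, the key geometric fact is that $\{z_t\}$ lies on a line, whose intersection with $B(0,M)$ is a chord of length at most $2M$; since $t\mapsto z_t$ traverses the segment at constant speed $|x-y|$, this yields $|I| \le 2M/|x-y|$ (for any $x\neq y$).

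Splitting the integral over $I$ and its complement and using $|I^c| = 1-|I|$,
\[
(x-y)\cdot(b(x)-b(y)) \;\le\; L|x-y|^2\,|I| - \kappa|x-y|^2(1-|I|) \;\le\; 2M(L+\kappa)\,|x-y| - \kappa|x-y|^2 .
\]
Taking $\lambda := \kappa/2$ and $R := 4M(L+\kappa)/\kappa$, the right-hand side is $\le -\lambda|x-y|^2$ whenever $|x-y|\ge R$, which is exactly the claim (and the equivalent inequality for $\nabla U$ follows from $b=-\nabla U$).

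I do not expect a genuine obstacle here: this is the standard ``strong convexity outside a compact set $\Rightarrow$ dissipativity at infinity'' argument. The only step that needs a moment's care is the bound $|I|\le 2M/|x-y|$, i.e.\ controlling the fraction of the segment $[y,x]$ lying inside $B(0,M)$ by the fact that a chord of $B(0,M)$ has length at most its diameter; everything else is bookkeeping, and the constants can be tracked explicitly as above.
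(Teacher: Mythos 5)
Your proof is correct, and it is precisely the standard ``strong convexity outside a compact set implies dissipativity at infinity'' argument that the paper itself omits (it only refers the reader to the literature for this lemma); the chord bound $|I|\le 2M/|x-y|$ and the resulting choice $\lambda=\kappa/2$, $R=4M(L+\kappa)/\kappa$ are all sound.
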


\begin{lemma}\label{lem:moment_hybrid_refined}
\begin{enumerate}[(i)]
\item 
Let $\rho(t,\cdot)$ solve \eqref{eq:FPE}.
For every $p\ge2$ there exist constants $\lambda>0$ (depending only on the coefficients and $\beta$, but independent of $p$) and $C_p>0$ such that, for all $t\ge0$,
\[
\int_{\mathbb{R}^d} |x|^p\,\rho(t,x)\,dx \;\le\; e^{-\lambda t}\!\int_{\mathbb{R}^d} |x|^p\,\rho_0(x)\,dx \;+\; C_p .
\]

\item 
Let $\{\rho_n^\xi\}_{n\ge0}$ be the discrete solution with stepsize $\tau>0$, where at each step $\xi_i\in\{(1,2),(2,1)\}$ is chosen independently with probability $1/2$ each. Then there exist $\tau_0>0$ and $C_p>0$ such that, for all $\tau\in(0,\tau_0)$,
\[
\sup_{n\ge0}\, \int_{\mathbb{R}^d} |x|^p\,\rho_n^\xi(x)\,dx
\;\le\; C_p\Big(1+\int_{\mathbb{R}^d} |x|^p\,\rho_0(x)\,dx\Big),
\]
where the constant $C_p$ is uniform in $\xi$.
\end{enumerate}

\end{lemma}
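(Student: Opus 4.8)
# Proof Proposal for Lemma \ref{lem:moment_hybrid_refined}

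The plan is to establish both parts via a Lyapunov/Grönwall argument applied to the evolution of the $p$-th moment $V_p(t):=\int |x|^p \rho(t,x)\,dx$, exploiting the confinement provided by Assumption \ref{ass:drift} (namely $\nabla^2 U \succeq \kappa I$ outside a ball, equivalently the dissipativity $(x-y)\cdot(b(x)-b(y)) \le -\lambda|x-y|^2$ for $|x-y| \ge R$ from Lemma \ref{lmm:onvexity}). For part (i), I would differentiate $V_p(t)$ in time using \eqref{eq:FPE}: integration by parts gives
\[
\frac{d}{dt} V_p(t) = p\int |x|^{p-2} x\cdot b(x)\,\rho\,dx + \beta^{-1} p(p+d-2)\int |x|^{p-2}\rho\,dx.
\]
Writing $x\cdot b(x) = x\cdot(b(x)-b(0)) + x\cdot b(0)$, the dissipativity yields $x\cdot b(x) \le -\lambda|x|^2 + C(1+|x|)$ for all $x$ (absorbing the bounded region into the constant, using linear growth of $b$). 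Hence $\frac{d}{dt}V_p \le -\lambda p\, V_p + C_p(1 + V_{p-1} + V_{p-2})$, and by Young's inequality the lower-order moments are absorbed: $C_p V_{p-1} \le \frac{\lambda p}{2} V_p + C_p'$. This gives $\frac{d}{dt}V_p \le -\frac{\lambda p}{2} V_p + C_p''$, and Grönwall's inequality delivers the claimed bound (with $\lambda$ there being, say, $\lambda p/2$, or one can take a uniform $\lambda$ by choosing the smallest over a range — but since the statement allows $\lambda$ independent of $p$, I would simply note $\lambda p/2 \ge \lambda$ for $p\ge 2$, so the decay rate $\lambda$ works uniformly).

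For part (ii), the one-step analysis must be done separately for the two substeps and then composed. The diffusion substep $e^{\tau\cL_2}$ corresponds to adding $\sqrt{2\beta^{-1}\tau}\,Z$ with $Z\sim\mathcal{N}(0,I)$; by the binomial expansion and Gaussian moment bounds, $\E|X+\sqrt{2\beta^{-1}\tau}Z|^p \le \E|X|^p + C_p\tau\,\E(1+|X|^{p-1}) \le (1+C_p\tau)\E|X|^p + C_p\tau$ after Young's inequality. The ODE substep $x\mapsto S(x,\tau)=\phi_\tau(x)$: along the flow, $\frac{d}{ds}|\phi_s(x)|^p = p|\phi_s|^{p-2}\phi_s\cdot b(\phi_s) \le -\lambda p|\phi_s|^p + C_p(1+|\phi_s|^{p-1})$, so Grönwall over $[0,\tau]$ gives $|\phi_\tau(x)|^p \le e^{-\lambda p\tau}|x|^p + C_p\tau \le (1-\tfrac{\lambda p\tau}{2})|x|^p + C_p\tau$ for $\tau$ small. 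Composing the two substeps in either order: one step maps $V_p \mapsto (1+C_p\tau)(1-\tfrac{\lambda p\tau}{2})V_p + C_p\tau \le (1 - c_p\tau)V_p + C_p\tau$ for $\tau < \tau_0$ small enough that the net coefficient is a contraction (here $c_p = \lambda p/2 - C_p > 0$ requires $\tau_0$ possibly $p$-dependent, but the statement allows $C_p$ to depend on $p$). Since this holds deterministically for each realization of $\xi$, iterating the recursion $a_{n+1} \le (1-c_p\tau)a_n + C_p\tau$ gives $\sup_n V_p(\rho_n^\xi) \le \max\{V_p(\rho_0), C_p/c_p\} \le C_p(1+V_p(\rho_0))$, uniformly in $\xi$ and $n$.

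The main obstacle I anticipate is making the ODE-substep estimate fully rigorous and getting clean constants: the dissipativity in Lemma \ref{lmm:onvexity} only holds for $|x-y|\ge R$, so I must carefully split $\R^d$ into $\{|x|\le R'\}$ and $\{|x|>R'\}$ for a suitable $R'$, use $x\cdot b(x) \le -\lambda|x|^2 + C$ on the outer region and crude linear-growth bounds $x\cdot b(x) \le C(1+|x|^2)$ (absorbed into the constant) on the bounded inner region — the point being that the inner region contributes only an $O(1)$ additive term to $\frac{d}{ds}|\phi_s(x)|^p$ since $|\phi_s(x)|$ is bounded there. A secondary technical point is ensuring $\tau_0$ can be chosen so that the one-step contraction factor is strictly less than $1$; since we only need $C_p$ (not $\tau_0$) uniform in $\xi$, and $\xi$ enters only through which of two deterministic maps is applied — both of which satisfy the same bound — this uniformity is automatic. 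I would also remark that by Jensen/convexity the same bounds transfer to $\bar\rho_n = \E_\xi \rho_n^\xi$ and to the time-interpolated densities, though that is not strictly needed for the statement as written.
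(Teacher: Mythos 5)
Your proposal is correct and follows essentially the same route as the paper: part (i) is the standard Lyapunov/Gr\"onwall argument (which the paper omits as standard), and for part (ii) the paper likewise estimates the $p$-th moment separately for the diffusion substep (growth factor $e^{C\epsilon\tau}$ via Young's inequality) and the advection substep (contraction $e^{-3\lambda\tau/4}$ from $x\cdot b(x)\le-\tfrac{3\lambda}{4}|x|^2+C$), composes them in either order, and iterates the resulting recursion. The only presentational difference is that you work with the random-variable/flow-map picture while the paper computes $\partial_s M_s$ at the density level; and note that the strict one-step contraction comes from choosing the Young parameter $\epsilon$ small (so the diffusion-step growth is dominated by the ODE-step decay), rather than from shrinking $\tau_0$, exactly as in the paper's ``one can choose $\epsilon$ such that'' step.
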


The proofs of Lemma \ref{lmm:onvexity} as well as part (i) of Lemma \ref{lem:moment_hybrid_refined} are standard. We omit the proofs and refer the readers to \cite[Appendix A]{li2024geometric} for related arguments.
Below, we present a proof for Lemma \ref{lem:moment_hybrid_refined} (ii).

\begin{proof}[Proof of Lemma \ref{lem:moment_hybrid_refined} (ii)]

Denote
\[
M_n^p:=\int_{\R^d} |x|^p \rho_n^{\xi}\,dx.
\]
Consider the diffusion step first. Let $\varrho_s=e^{s\cL_{\xi(2)}}\varrho_0$
and $M_s:=\int |x|^p \varrho_s(x)\,dx$. Then,
\[
\partial_s M_s=\int \beta^{-1}|x|^p \Delta \rho_s\,dx
=\beta^{-1}p(p-2+d)\int |x|^{p-2}\rho_s\,dx.
\]
For any $\varepsilon\in(0,1]$, Young's inequality gives $|x|^{p-2}\le \varepsilon |x|^p+C(\epsilon,p)$. Hence, one finds for $\tau\le 1$ that
\[
M_{\tau}\le e^{C\epsilon \tau}M_0+C\tau.
\]

For the advection step, we define similarly (with abuse of the notations)
that $\varrho_s=e^{s\cL_{\xi(1)}}\varrho_0$ and $M_s:=\int |x|^p \varrho_s(x)\,dx$. Then,
\[
\partial_sM_s=\int p |x|^{p-2}x\cdot b(x)\varrho_s\,dx.
\]
By the assumption on $b$, one has
\[
x\cdot b(x)\le -\lambda |x|^2+x\cdot b(0)
\le -\frac{3\lambda}{4}|x|^2+C.
\]
Then, one has for $\tau\le 1$ that
\[
M_{\tau}\le M_0\exp(-\frac{3\lambda}{4}\tau)+C'\tau.
\]

Combining the two substeps in either order given, for some $\tau_0>0$ and all $0<\tau\le\tau_0$, it is easy to see that 
\[
M_{n+1}^p \le M_n^p\exp(-\frac{3\lambda}{4}\tau)e^{C\epsilon \tau}+C\tau.
\]
Clearly, one can choose $\epsilon$ such that
\[
M_{n+1}^p \le M_n^p\exp(-\frac{\lambda}{2}\tau)+C\tau.
\]
Iterating this gives the asserted uniform-in-time bound in \textup{(ii)}.
\end{proof}

\begin{assumption}\label{ass:initial-density}
The initial density $\rho_0$ is assumed to satisfy the following conditions. 
First, it satisfies the following bounds:
\[
A_0 \exp(-c_0|x|^p) \;\le\; \rho_0(x) \;\le\; A_1 \exp(-\delta\beta U(x))
\]
for some constants $A_1, A_2, c_0, \delta > 0$ and $p > 0$.
Moreover, the derivatives of the logarithm of $\rho_0$ satisfy:
\[
|\nabla \log \rho_0(x)| \le C \bigl(1 -\log(\rho_0/M)\bigr), 
\qquad
\|\nabla^2 \log \rho_0(x)\| \le C \bigl(1 + |x|^{q}\bigr),
\]
for some $M\ge \|\rho_0\|_{\infty}$ and exponent $q>0$.
\end{assumption}

With the assumption above, we have the following estimate.
\begin{proposition}\label{pro:density-SDE}
There exists some constant such that
\[
A_2\exp(-\gamma_1 |x|^p)\le \rho(x, t)\le C[\exp(-\delta\beta U(x))e^{-\lambda t}+\exp(-\beta U(x))].
\]
\end{proposition}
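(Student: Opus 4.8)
The plan is to obtain the two-sided bound on $\rho(x,t)$ by propagating the corresponding bounds on $\rho_0$ along the Fokker--Planck flow \eqref{eq:FPE}, using the maximum principle (comparison principle) for the parabolic operator $\mathcal{L}^* = \nabla\cdot(\nabla U\,\cdot) + \beta^{-1}\Delta$. The key observation is that the Gibbs density $\rho_\infty(x) := Z^{-1}e^{-\beta U(x)}$ is a stationary solution, and more generally $e^{-\delta\beta U(x)}$ for $\delta\in(0,1)$ is a subsolution-type comparison function on the region $\{|x|\ge M\}$ where $\nabla^2 U\succeq \kappa I$ (by Assumption \ref{ass:drift}), because there $-\delta\beta U$ is strictly concave and the drift pushes mass inward. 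So the strategy splits naturally into an upper bound and a lower bound, each handled by exhibiting an explicit supersolution/subsolution.

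For the \emph{upper bound}, I would first note that by Assumption \ref{ass:initial-density} we have $\rho_0(x)\le A_1 e^{-\delta\beta U(x)}$. Consider the candidate barrier $w(x,t) = C_1 e^{-\delta\beta U(x)}e^{-\lambda t} + C_2 e^{-\beta U(x)}$. One computes $\mathcal{L}^* w$ and checks that, for $|x|\ge M$, the term $e^{-\delta\beta U}$ decays because $\mathcal{L}^*(e^{-\delta\beta U}) = \beta^{-1}(\delta\beta)^2|\nabla U|^2 e^{-\delta\beta U} - \delta\beta(\Delta U)e^{-\delta\beta U}\cdot\beta^{-1} + \cdots$; more precisely, writing $\mathcal{L}^*(e^{-\varphi}) = e^{-\varphi}\bigl(\beta^{-1}|\nabla\varphi|^2 - \beta^{-1}\Delta\varphi - \nabla U\cdot\nabla\varphi + \Delta U\bigr)$ with $\varphi=\delta\beta U$, the dominant term on $\{|x|\ge M\}$ is $-\delta(1-\delta)\beta\,\nabla U\cdot\nabla U \cdot(\text{sign})$, which is negative and bounded away from $0$ by convexity, giving $\mathcal{L}^*(e^{-\delta\beta U})\le -\lambda e^{-\delta\beta U} + (\text{bounded, compactly supported correction})$. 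The compactly supported leftover is absorbed by taking $C_2$ large in the stationary term $e^{-\beta U}$ (which satisfies $\mathcal{L}^* e^{-\beta U}=0$ and is bounded below on the compact set $\{|x|\le M\}$). Then $\partial_t w - \mathcal{L}^* w \ge 0$, and $w(x,0)\ge\rho_0(x)$ for suitable $C_1$, so the comparison principle (valid here because $\rho$ has the Gaussian-type tails guaranteed by Lemma \ref{lem:moment_hybrid_refined}(i), which legitimizes the use of the maximum principle on $\mathbb{R}^d$) yields $\rho(x,t)\le w(x,t)$, i.e.\ the stated upper bound.

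For the \emph{lower bound}, I would use the subsolution $v(x,t) = A_2 e^{-\gamma_1|x|^p}$ with $\gamma_1$ and $A_2$ chosen appropriately; since $|x|^p$ grows polynomially while $U$ grows at least quadratically for large $|x|$ (from $\nabla^2U\succeq\kappa I$ outside $M$), one can arrange $\mathcal{L}^*(e^{-\gamma_1|x|^p})\ge 0$ for $|x|$ large by making $\gamma_1$ small, and handle the bounded region again by comparison using the fact that $\rho(x,t)$ stays strictly positive (Gaussian lower bounds for the heat-kernel part of the splitting, or Harnack inequality for the full parabolic equation) together with $\rho_0(x)\ge A_0 e^{-c_0|x|^p}$. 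The main obstacle I anticipate is making the comparison principle rigorous on the unbounded domain $\mathbb{R}^d$: one needs an a priori growth/decay control on $\rho$ to rule out pathological behavior at infinity, and this is exactly why Lemma \ref{lem:moment_hybrid_refined}(i) (uniform-in-time moment bounds) and the sub-Gaussian tail estimates are invoked — the cleanest route is to work with $h = \rho/w$ (resp.\ $\rho/v$), derive a drift-diffusion equation for $h$, and apply a Phragmén--Lindelöf type argument. A secondary technical point is the precise verification that the correction terms from $\Delta U$ and the cross term $\nabla U\cdot\nabla\varphi$ near $|x|=M$ are genuinely absorbed; this is where Assumption \ref{ass:drift} ($C^4$, bounded higher derivatives, convexity at infinity) does the work, and I would carry it out by splitting $\mathbb{R}^d$ into $\{|x|\le M'\}$ and $\{|x|>M'\}$ for a slightly enlarged radius $M'$.
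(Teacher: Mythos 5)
Your route is genuinely different from the paper's. The paper never touches the Fokker--Planck operator with barriers: it conjugates by the Gibbs density, setting $q_t(x)=\rho_t(x)e^{\beta U(x)}$, so that $q$ solves the backward equation $\partial_t q=-\nabla U\cdot\nabla q+\beta^{-1}\Delta q$ and admits the representation $q_t(x)=\E\,q_0(X_t(x))$ along the Langevin trajectory started at $x$. The upper bound then follows from It\^o's formula applied to $V=e^{\beta(1-\delta)U}$, giving $\frac{d}{dt}\E V(X_t)\le-\lambda\,\E V(X_t)+C$ and hence the bound by Gr\"onwall; the lower bound follows by combining this moment control with Markov's inequality to get $\P(|X_t(x)|\le L(1+|x|))\ge 1/2$ and then inserting the assumed lower bound on $q_0$. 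Since $\mathcal{L}^*(e^{-\beta U}g)=e^{-\beta U}(\beta^{-1}\Delta g-\nabla U\cdot\nabla g)$, your supersolution computation for $e^{-\delta\beta U}$ is literally the same algebra as the paper's It\^o computation for $e^{(1-\delta)\beta U}$; what the probabilistic phrasing buys is that Gr\"onwall replaces the comparison principle on $\R^d$ and the representation gives uniform-in-time interior positivity for free, which are exactly the two points you identify as the delicate ones.

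Two steps in your sketch would fail as written. First, the stationary piece $C_2e^{-\beta U}$ satisfies $(\partial_t-\mathcal{L}^*)(C_2e^{-\beta U})=0$ identically, so enlarging $C_2$ cannot absorb the compactly supported defect left by the first barrier term; you need a time-dependent coefficient such as $C_2(1-e^{-\lambda t})e^{-\beta U}$, whose image under $\partial_t-\mathcal{L}^*$ equals $C_2\lambda e^{-\lambda t}e^{-\beta U}>0$ and dominates the leftover on $\{|x|\le M'\}$, while still yielding the stated bound and the correct initial comparison. Second, for the subsolution $e^{-\gamma_1|x|^p}$ the inequality $\mathcal{L}^*(e^{-\gamma_1|x|^p})\ge0$ at infinity pits the positive term $\beta^{-1}\gamma_1^2p^2|x|^{2p-2}$ against the negative term $-\gamma_1p|x|^{p-2}x\cdot\nabla U$, which is of size $-C\gamma_1|x|^p$; when $p=2$ these are of the same order in $|x|$ and you need $\gamma_1$ \emph{large}, not small (for $p>2$ any $\gamma_1$ works once $|x|$ is large). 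Finally, the uniform-in-time strict positivity of $\rho$ on the comparison boundary $\{|x|=M'\}\times(0,\infty)$ does not come from the local parabolic Harnack inequality alone; this is precisely the step the paper's representation $q_t(x)=\E\,q_0(X_t(x))$ trivializes, and if you keep the PDE route you should import that representation (or a chained, time-uniform Harnack argument) for this one point.
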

The proof is provided in Appendix \ref{app:esti-FP}.

\begin{theorem}[Derivative estimates for log-density]\label{thm:gradient_estimate}
Under Assumptions \ref{ass:drift} and 
\ref{ass:initial-density},  the Fokker-Planck solution $\rho(t,x)$ satisfies for some $p_1>0$, $p_2>0$, $C_1>0,C>0$, $M\ge \|\rho\|_{L^{\infty}(\R^d\times [0,\infty))}$ and all $t\ge 0$ that:
\begin{equation}\label{eq:gradient_bound}
|\nabla \log\rho(x,t)|\le C_1(1-\log(\rho(x,t)/M))  \leq C ( 1 + |x|^{p_1}),
\end{equation}
\begin{equation}\label{eq:gradient_bound2}
 \|\nabla^2 \log \rho(x,t)\| \leq C ( 1 + |x|^{p_2}) .
\end{equation}
\end{theorem}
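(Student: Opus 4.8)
The plan is to run the classical Bernstein / Li--Yau--Hamilton (Souplet--Zhang type) gradient-estimate machinery on the logarithm of the density. Writing $u:=\log\rho$ and dividing the Fokker--Planck equation \eqref{eq:FPE} by $\rho>0$, one gets the viscous Hamilton--Jacobi equation
\[
\partial_t u=\beta^{-1}\Delta u+\beta^{-1}|\nabla u|^2-b\cdot\nabla u-\div b ,\qquad u=\log\rho .
\]
By Assumption \ref{ass:drift}, $\nabla b=-\nabla^2U$ and $\nabla\div b=-\nabla\Delta U$ are bounded, so the transport coefficient and the zeroth-order source are under control. One first records that $\rho$ is smooth and strictly positive for $t>0$ by parabolic regularity (the potential is $C^4$), that $\rho\le M$ for all $t$ by the maximum principle together with Assumption \ref{ass:initial-density}, and that the moment bounds of Lemma \ref{lem:moment_hybrid_refined} and the two-sided bounds of Proposition \ref{pro:density-SDE} are available; these make all the manipulations below legitimate.

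For the gradient bound, set $g:=|\nabla u|^2$ and $v:=\log M-u\ge 0$. Differentiating the equation and using the Bochner identity $\Delta g=2|\nabla^2u|^2+2\nabla u\cdot\nabla\Delta u$ gives
\[
\partial_t g\le \beta^{-1}\Delta g+(2\beta^{-1}\nabla u-b)\cdot\nabla g-2\beta^{-1}|\nabla^2u|^2+C(1+g),
\]
\[
\partial_t v=\beta^{-1}\Delta v-b\cdot\nabla v-\beta^{-1}g+\div b ,
\]
so that $v$ absorbs $g$ with the favourable sign. Running the Bernstein maximum-principle argument on the auxiliary function $\Phi:=g/(1+v)^2$ — here the bounded terms $\div b$ and $\nabla b$ play the role of a curvature lower bound — and using Assumption \ref{ass:initial-density} to control $\Phi$ at $t=0$ (for short times) while the $\Phi$-equation handles $t$ large, one obtains $|\nabla\log\rho|\le C_1\bigl(1-\log(\rho/M)\bigr)$ uniformly in time. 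Inserting the lower bound $\rho(x,t)\ge A_2 e^{-\gamma_1|x|^p}$ from Proposition \ref{pro:density-SDE} converts this into $|\nabla\log\rho|\le C(1+|x|^{p_1})$ with $p_1=p$, which is \eqref{eq:gradient_bound}.

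For the Hessian bound one iterates the scheme with $\psi:=|\nabla^2u|^2$. Differentiating the $u$-equation twice and using the corresponding Bochner identity $\Delta\psi=2|\nabla^3u|^2+2\nabla u\cdot\nabla\Delta u_{\cdot\cdot}$ produces a parabolic inequality of the schematic form
\[
\partial_t\psi\le \beta^{-1}\Delta\psi+(2\beta^{-1}\nabla u-b)\cdot\nabla\psi-2\beta^{-1}|\nabla^3u|^2+C\,\psi^{3/2}+C(1+|\nabla u|^2)\,\psi+C(1+|\nabla u|^2),
\]
where the cubic term $\psi^{3/2}$ arises from $\tr\bigl((\nabla^2u)^3\bigr)$ generated by differentiating $|\nabla u|^2$ twice, and where the already-established gradient bound reduces the remaining inhomogeneous terms to polynomials in $|x|$. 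I would handle this by a localized Bernstein argument: work on unit parabolic cylinders with a spatial cutoff (whose error terms carry only universal constants), use Proposition \ref{pro:density-SDE} for the local two-sided bounds on $\rho$ and the gradient estimate for $|\nabla u|$, and absorb $\psi^{3/2}$ against the good term $-2\beta^{-1}|\nabla^3u|^2$ after a Young inequality; equivalently, one can put $\psi$ together with a large multiple of $g$ into the test function. This yields $\|\nabla^2\log\rho\|\le C(1+|x|^{p_2})$, i.e.\ \eqref{eq:gradient_bound2}.

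I expect the Hessian step to be the genuine obstacle: the self-interaction term $\psi^{3/2}$ is superlinear, so a bare maximum principle for $\psi$ alone does not close, and the delicate point is to tame it while keeping the resulting bound merely polynomial in $|x|$. A secondary technical nuisance, present already in the gradient step, is to justify that the relevant maxima are attained — equivalently, that the auxiliary functions decay at spatial infinity — which is exactly where the uniform moment bounds of Lemma \ref{lem:moment_hybrid_refined} and the density estimates of Proposition \ref{pro:density-SDE} enter, either directly or through the cutoff/barrier in the localized version.
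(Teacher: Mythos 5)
Your gradient estimate is essentially the paper's argument: the paper runs the Bernstein maximum principle on $v=|\nabla f|^2/(1-f+\log M)^2$ with $f=\log\rho$, a cutoff $\phi=\psi(|x-x_*|/(|x_*|+1))$, and then converts the resulting bound $|\nabla f|\le C_1(1-f+\log M)$ into a polynomial bound via the lower bound on $\rho$ from Proposition \ref{pro:density-SDE}. Your $\Phi=g/(1+v)^2$ is the same test function, and the mechanism producing the good term (the quadratic nonlinearity $|\nabla u|^2$ entering $1-f$ with a favourable sign) is the one the paper exploits. So the first half is fine.

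The Hessian step, however, contains a genuine gap exactly where you flagged the difficulty. The cubic term $\mathrm{tr}\bigl((\nabla^2u)^3\bigr)\le\psi^{3/2}$ cannot be absorbed into the dissipative term $-2\beta^{-1}|\nabla^3u|^2$ by Young's inequality: in a pointwise maximum-principle argument there is no product structure linking $|\nabla^2u|^3$ to $|\nabla^3u|^2$ (that absorption is only available in integral/energy arguments after an integration by parts, which is not what you are running). The paper's device is different: it normalizes by the denominator $D:=A(1-f+\log M)^2-|\nabla f|^2$ — deliberately \emph{not} $(1-f+\log M)^2$, precisely so that the derivatives of $|\nabla f|^2$ appearing in $\nabla D$ cancel dangerous mixed terms — and then the cubic term $f_{k\ell}f_{i\ell}f_{ki}/D=u^{3/2}D^{1/2}$ is split by Young into $\epsilon u^2$ (absorbed by the good quartic term $\tfrac32 u^2$, which comes from $|\nabla^2f|^2\cdot|\nabla^2f|^2/D^2$ generated by differentiating the quadratic nonlinearity twice) plus a power of $D$, which grows polynomially in $|x|$. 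That polynomially growing remainder is then tamed by dividing the test function by the weight $q(x)=1+|x|^r$, whose logarithmic derivatives are bounded and whose transport term $b\cdot\nabla\log q$ is bounded by the linear growth of $b$. Your alternative of "putting $\psi$ together with a large multiple of $g$" gestures at the right idea but does not supply either the modified denominator or the weight $q(x)$, and without them the argument does not close: the maximum of $\psi/(1-f)^2$ alone is attacked by remainders that grow polynomially in $|x|$ and cannot be dominated by any constant.
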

\begin{proof}
In the proof below, we will use $C$ to denote a generic constant that is independent of the parameters in consideration. The concrete meaning of $C$ can change from line to line.

\vskip 0.1 in

\noindent \textbf{First order derivative estimates.}\\
  As we have shown, for all $t\ge 0$, there is a uniform upper bound for $\rho(t, x)$. Let one such upper bound be $M$. 
Let $ f = \log \rho$ and we consider
\begin{gather}
v:=\frac{|\nabla f|^2}{(1-f+\log M)^2}=\frac{f_k f_k}{(1-f+\log M)^2}.
\end{gather}
In the proof here, we use the Einstein summation convention so that summation is performed for repeated indices.
Note that
\[
1-f+\log M\ge 1.
\]
One has by Assumption \ref{ass:initial-density} that (if the constant $M$ does not agree with the one in Assumption \ref{ass:initial-density}, one may enlarge the bound slightly)
\[
\sup_x v(x, 0)<\infty.
\]

Since the equation for $\rho$ is linear, without loss of generality, we may take $M=1$ for convenience of the discussion in this section. 
The equation for $f=\log\rho$ (or $\log(\rho/M)$) is given by
\[
f_t= \beta^{-1} f_{i i}+ \beta^{-1} f_i f_i - b^i f_i+c ,
\]
where $ c = \Delta U $.
Define
\[
\mathcal{L} v:=  \beta^{-1} v_{i i} - b^i v_i-v_t .
\]
In the following calculations, without loss of generality, we may take 
\[
\beta = 1
\]
 for convenience in this section. 
Then, one can derive the following estimate
\[
\tilde{\cL}v:=\cL v-\frac{2  f}{1-f}   f_i v_i  \geq \frac{2}{3}  \frac{f_{ki}f_{ki}}{(1-f)^2}+  (1-f) v^2 - R_1(x),
\]
where
$$
\begin{aligned}
	R_1(x)=  \frac{2 c f_k f_k}{(1-f)^3}+2 \frac{c_k f_k-b_k^i f_k f_i}{(1-f)^2} .
\end{aligned}
$$
The details are given in Appendix \ref{app:compute}.

Fix $x_*$ and take the cutoff function $\phi(x)=\psi(|x-x_*|/(|x_*| + 1))$ with $\psi$ satisfying the following: (1) $\psi \in C^2[0, \infty)$ such that
$0 \leq \psi(r) \leq 1$, and is  $1$ on $[0, 1]$, $0$ for $r\ge 2$; (2)
\begin{gather}
\psi'\le 0, \quad |\psi'|+|\psi''|\le C_{a}\psi^{a},\quad \forall a\in (0, 1).
\end{gather}
Then, $\phi$ is supported in $B(x_*, 2|x_*|+2)$ and
\[
\tilde{\mathcal{L}}(\phi v)=\phi \tilde{\mathcal{L}} v + v \tilde{\mathcal{L}} \phi + 2   \phi_i v_i.
\]
From the estimates obtained, one has 
$$
\phi \tilde{\mathcal{L}} v \geq (1-f) \phi v^2 - \phi R_{1}(x) .
$$
Moreover, due to $ \phi_i(x) = \psi'\left( \frac{|x - x_*|}{|x_*| + 1} \right) \cdot \frac{x_i - x_{*i}}{(|x_*| + 1) |x - x_*|}$ and $ |b(x)| \le C (1 + |x|)$, we find that
\[
\begin{split}
v \tilde{\mathcal{L}} \phi &=
	  v \phi_{ii} - b^{i} v \phi_{i}  -\frac{2  f}{1-f}   v f_i \phi_i 
	\ge C v(-1-  \frac{|b(x)| \mathbf{1}_{|x_*|+1 \le|x-x_*|\le 2|x_*|+2}}{|x_*| +1}- \frac{|f||f_i||\phi_i|}{1-f} )\\
&\geq -C v\left(1+\epsilon |\phi_i|^2(1-f)v+\frac{f^2}{1-f} \right) ,
\end{split}
\]
and
\[
2  \phi_i v_i  = 2   \phi_i \phi^{-1} \left( \phi v\right)_i 
	- 2 \phi_i^2\phi^{-1} v .
\]
 Using the condition $|\psi'| \leq C_a \psi^a$ with $a = \frac{1}{2}$, one has
\[
|\phi_i|^2 \leq |\psi'(r)|^2 \cdot \frac{1}{(|x_*| + 1)^2}
\leq C^2 \phi \cdot \frac{1}{(|x_*| + 1)^2} \le C \phi.
\]
Hence, one has
\begin{gather}
\tilde{\mathcal{L}}(\phi v) \geq (1-C\epsilon)(1-f)\phi v^2-C v\left[1+\frac{f^2}{1-f}\right] +2\phi_i\phi^{-1}(\phi v)_i - M_1 \phi,
\end{gather}
where
$$
\begin{aligned}
M_1 =  C_{\varepsilon} \big( 1 
+\|\partial b \|_{L^{\infty}}^2  +\|c\|_{L^{\infty}}^2
+\|\partial c\|_{L^{\infty}}^2 
\big).
\end{aligned}
$$
It is then left to consider where the function $\phi v(x, t)$ attains its maximum.
\begin{itemize}
	\item Since $\phi$ vanishes for $|x-x_*|\ge 2(|x_*|+1)$, if $\phi v$ reaches its maximum on the boundary of $[0,\infty)\times B(x_*, 2(|x_*|+1))$, then it must be the initial time line so that 
	\[
	v(x_*, t)=(\phi v)(x_*, t)\le \sup_{x }(\phi v)(x, 0)\le \sup_x v(x, 0).
	\]
	\item If $\phi v$ reaches its maximum at $\left(x', t'\right)$ with $t'>0$, then $x'$ must be in the interior of the ball $B(x_*, 2|x_*|+2)$ so that $\phi_i \phi^{-1}$ is well-defined at this point,  and $(\phi v)_i=0$ at that point. Then, at $(x', t')$, it holds by the parabolic maximum principle that
\[
0 \geq \tilde{\mathcal{L}} ( \phi v\left(x', t'\right) ) 
	\geq 
	(1-C\epsilon)(1-f)\phi v^2-C v\left[1+\frac{f^2}{1-f}\right] - M_1.
\]
If $v(t', x')\le 1$, then one has $v(t, x_*)=\phi(t, x_*)w(t, x_*)\le w(t', x')\phi(t', x')\le 1$. 
Otherwise, 
\begin{equation}
\begin{split}
&v(x_*, t)=\phi(x_*) v(x_*, t)\le \phi(x')v(x', t')\\
\le& \frac{C}{1-C\epsilon}\frac{1}{1-f}\left[1+\frac{f^2}{1-f}\right] + \frac{M_1}{(1 - C\epsilon)(1 - f) v(x',t')} \le  C.   
\end{split}
\end{equation}

\end{itemize}
By the arbitrariness of $x_*$, one then has $\sup_{x,t}v(x,t)<\infty$ so that
\[
|\nabla f| \le C(1-f+\log M)\le C(1+|x|^{p_1}),
\]
for some $p_1>0$ as a consequence of Proposition \ref{pro:density-SDE}.

\textbf{ Second order derivative estimates.}\\
Consider
\begin{gather}
w:=\frac{1}{q(x)}\frac{|\nabla^2 f|^2}{A(1-f+\log M)^2-|\nabla f|^2}=\frac{1}{q}\frac{f_{k\ell} f_{k\ell}}{A(1-f+\log M)^2-f_jf_j},
\end{gather}
\begin{equation}\label{eq:def-u}
\quad
u \;:=\; \frac{|\nabla^2 f|^2}{\,A(1-f+\log M)^2 - |\nabla f|^2\,},
\qquad
w \;=\; \frac{u}{q(x)},
\end{equation}
where $q(x)=1+|x|^{r}$ for some $r\ge 0$ to be determined such that $w$ is bounded at $t=0$ and some extra polynomial terms will be controlled. By the results of $v$ above, we can choose $A$ such that
\begin{gather}\label{eq:aux1}
\frac{A}{2}(1-f+\log M)^2  \le A (1-f+\log M)^2-|\nabla f|^2\le A(1-f+\log M)^2.
\end{gather}
Though $(1-f+\log M)^2$ is comparable to $A (1-f+\log M)^2-|\nabla f|^2$, we do not use the former in the denominator
as we need the derivatives of $f_kf_k$ to help to control certain terms which shall be clear in the proof. As before, without loss of generality, we can set again $M=1$ and $\beta=1$. 

Consider again the operator
\[
\cL u= u_{ii}+ b^i u_i -u_t.
\]
By direct computation (Appendix \ref{app:compute} provides the details), one has the following estimate
\[
\tilde{\cL}u:=\cL u-4\frac{A(1-f)f_i+f_k f_{ki}}{A(1-f)^2-f_jf_j} u_i
\ge \frac{ f_{k\ell i}f_{k\ell i}}{A(1-f)^2-f_j f_j}+\frac{3}{2}u^2 - C q(x)
-R_{2}(x),
\]
where
$$
R_2 := \frac{2f_{k\ell} (f_i b^i_{\ell k} + b^i_\ell f_{i k} + b^i_k f_{i\ell} + c_{kl})}{A(1-f)^2 - f_j f_j} 
+ \frac{2f_{k\ell} f_{k\ell} \left( A(1-f) c + f_j c_j \right)}{(A(1-f)^2 - f_j f_j)^2}.
$$
According to Young's inequality, one has 
$$ |R_2| \leq  C ( \| \partial^2 c \|^2 + \|\partial b \|^2 + \| \partial^2 b \|^2 ) ( u + v ) +
C u v \le C (1 + u).
$$
Then, one has
\begin{multline*}
	\tilde{\cL} w
	=\frac{1}{q}\left(\cL u-4\frac{A(1-f)f_i+f_k f_{ki}}{A(1-f)^2-f_jf_j} u_i\right)
	-2 (\log q)_i w_i- [(\log q)_i^2+(\log q)_{ii}]w\\
	+4\frac{A(1-f)f_i+f_k f_{ki}}{A(1-f)^2-f_jf_j} w(\log q)_i
	 -  b^{i} (\log q )_{i} w.
\end{multline*}
Note that the derivatives of $\log q$ and
$b^{i} (\log q )_{i} =\frac{r |x|^{r-2} (b \cdot x)}{1 + |x|^{r}} $
  are bounded. 
Moreover, according to Young's inequality and the results of first-order derivative estimates, it follows that
\begin{gather}\label{auxeq:transcoefficient}
	\left|\frac{A(1-f)f_i+f_k f_{ki}}{A(1-f)^2-f_jf_j}\right|
	\le C+\epsilon u.
\end{gather}
Then, we obtain
\[
\cL w+[-4\frac{A(1-f)f_i+f_k f_{ki}}{A(1-f)^2-f_jf_j}+2(\log q)_i] w_i
\ge (\frac{3}{2}-\epsilon)q(x) w^2-C-Cw - \frac{1}{q(x)}.
\]
Next, fix again $x_*$ and we again take the cutoff function $\phi=\psi(|x-x_*|)$ as before. Then,
\[
\tilde{\cL}(\phi w)=\phi \tilde{\cL}w+w\tilde{\cL}\phi+2 \phi_i \phi^{-1}(w\phi)_i
-2 \phi_i^2\phi^{-1}w.
\]
By \eqref{auxeq:transcoefficient}, it is easy to know that
\[
w\tilde{\cL}\phi\ge -w(C+\epsilon |\phi_i|^2 u)\ge -\epsilon \phi q(x)w^2-Cw.
\]
Hence, one has
\[
\cL (\phi w)+\left[-4\frac{A(1-f)f_i+f_k f_{ki}}{A(1-f)^2-f_jf_j}+2(\log q)_i \right] (\phi w)_i
\ge q(x) \phi w^2-Cw-C - \frac{1}{q(x)}.
\]

Then, applying the maximum principle and repeating the same argument as for $v$,  one obtains
\[
w(t,x_*)\le \max\left\{\sup_{x\in B(x_*, 2(|x_*|+1))}\|w(0, x)\|, \quad C\right\}.
\]
Hence, $w$ is uniformly bounded and
\[
\|\nabla^2 f\|\le \sqrt{Aq}(1-f+\log M)
\]
according to \eqref{eq:aux1}. Applying Proposition \ref{pro:density-SDE}, the claim for the Hessian is then proved.
\end{proof}

\section{Ergodicity of the random splitting LMC}\label{sec:ergodicity}

For the convenience of the discussion in this section, we will abandon the time interpolation in \eqref{eq:timeinterpolation}, and regard the advection as a step that takes no time; that is, the diffusion is performed in $(t_n^+, t_{n+1}^-)$ and the advection is performed at $\{t_n\}=:[t_{n}^-, t_n^+]$.  Then the random splitting LMC is then a random pulse-diffusion model.
Throughout this section, let $(\xi_k)_{k\ge1}$ be the i.i.d. random splitting choices
and $(W_t)_{t\ge0}$ the driving Brownian motion.
We let the process $X(t)$ to experience a diffusion between $t_n^+<t<t_{n+1}^-$:
\[
dX=\sqrt{2\beta^{-1}}\,dW.
\]
If the diffusion happens first in the $n$th step, then
\[
X(t_n^+)=X_n, \quad X_{n+1}=S(X(t_{n+1}^-), \tau).
\]
Otherwise
\[
X(t_n^+)=S(X_n, \tau), \quad X_{n+1}=X(t_{n+1}^-).
\]

Consider a coupling copy of $X$, which we denote by $Y$.
Let $\theta$ be the stopping time that $X=Y$. Clearly, if $X\neq Y$, then $S(X, \tau)\neq S(Y, \tau)$ by the classical ODE
theory. Hence, $\theta$ can only be in $(t_n^+, t_{n+1}^-)$ for some $n$.
In the diffusion step, we let the dynamics of $Y$ to be
\begin{gather}
dY=
(I-2 \hat{e}\otimes \hat{e})dW_{t\wedge \theta}, \quad \hat{e}=\frac{X-Y}{|X-Y|}, \quad t<\theta,
\end{gather}
and
\begin{gather}
Y_t=X_t,\quad t\ge \theta.
\end{gather}
The dynamics of $Y$ in the diffusion step is well-defined, as one can define the dynamics with stopping times $\theta_j$ suitably
$\theta_j=\inf_{t>t_n}\{|X(t)-Y(t)|=1/j\}$ and then take $j\to\infty$ to get the limit dynamics for the dynamics of $Y$ (see \cite{eberle2016reflection,li2024geometric}).

 We define the discrete-time
information $\sigma$-algebra at step $n$ by
\[
\mathcal{G}_n := \sigma\!\bigl(X_0, Y_0\; (\xi_k)_{k\le n},\; (W_s)_{s\le t_n}\bigr),
\qquad n\ge0,
\]
which collects all randomness available at time \(t_n^+\).
Consequently, Brownian increments on \((t_n^+,t]\) are independent of \(\mathcal{G}_n\).
We also write 
\[Z_t:=X_t-Y_t\] be the difference of the two processes,  \(Z_n:=Z(t_n)\), and \(Z_n^\pm:=Z(t_n^\pm)\).

Then, one has
\[
dZ=2\sqrt{2\beta^{-1}}d\zeta_t,\quad 
\zeta_t=\int_{t_n\wedge\theta}^{t\wedge\theta}\frac{Z_s^{\otimes 2}}{|Z_s|^2}dW.
\]

As in  \cite{eberle2016reflection,eberle2019couplings,li2024geometric}.
We consider the function
\begin{gather}
f(r)=\int_0^r e^{-c_f (s\wedge R_1)}\,ds,
\quad R_1=3R.
\end{gather}

The main result in this section is the following.
\begin{theorem}\label{thm:ergodicity0}
There exists some constants $A_0>0$ and $\tau_0>0$. When $c_f> A_0\beta LR_1$ and $\tau\le \tau_0$, there is some constant $\lambda>0$ independent of $\tau$ such that
\begin{gather}
\E f(|Z_{n+1}|)\le e^{-\lambda \tau}\E f(|Z_n|).
\end{gather}
Consequently, let $\mu_n$ and $\nu_n$ be the laws of the random splitting LMC with initial laws $\mu_0$ and $\nu_0$, respectively. Then, it holds that
\[
W_1(\mu_n, \nu_n) \leq C e^{-\lambda n \tau} W_1(\mu_0, \nu_0),
\]
for some constant $C>0$ independent of $\tau$ and $n$.
\end{theorem}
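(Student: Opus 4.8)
The plan is to adapt the reflection-coupling argument of \cite{eberle2016reflection,li2024geometric} to the pulse-diffusion picture just set up. Everything reduces to the one-step conditional contraction
\[
\E\bigl[f(|Z_{n+1}|)\,\big|\,\mathcal{G}_n\bigr]\le(1-\lambda\tau)\,f(|Z_n|)
\]
for a suitable $\lambda>0$ independent of $\tau$; iterating and taking expectations gives $\E f(|Z_n|)\le e^{-\lambda n\tau}\E f(|Z_0|)$, and the Wasserstein statement follows by running the coupling from a $W_1$-optimal coupling of $(\mu_0,\nu_0)$ and using the elementary two-sided bound $e^{-c_f R_1}r\le f(r)\le r$ (valid since $f'(s)\in[e^{-c_f R_1},1]$), so that
\[
W_1(\mu_n,\nu_n)\le\E|X_n-Y_n|\le e^{c_f R_1}\,\E f(|Z_n|)\le e^{c_f R_1}e^{-\lambda n\tau}\,W_1(\mu_0,\nu_0),
\]
i.e.\ $C=e^{c_f R_1}$.

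To prove the one-step estimate I split a step into its diffusion sub-step (pure reflection-coupled Brownian motion of length $\tau$) and its advection sub-step ($S(\cdot,\tau)$ applied to both copies), estimate each, and recombine via the tower property. For the diffusion sub-step, since $dZ=2\sqrt{2\beta^{-1}}\,\hat e(\hat e\cdot dW)$ pushes $Z$ only along $\hat e=Z/|Z|$, a short It\^o computation shows that $r_t:=|Z_t|$ is, up to the coupling time $\theta$, a one-dimensional Brownian motion with diffusion coefficient $\sigma^2=8\beta^{-1}$ and no drift; applying It\^o to $f(r_{t\wedge\theta})$ and using $f''(r)=-c_f e^{-c_f r}\mathbf{1}_{\{r<R_1\}}\le 0$ (the local martingale part having zero expectation) yields
\[
\E\bigl[f(r_{\tau\wedge\theta})\mid r_0\bigr]\le f(r_0)-\tfrac{\sigma^2 c_f}{2}\,\E\!\int_0^{\tau\wedge\theta}e^{-c_f r_s}\mathbf{1}_{\{r_s<R_1\}}\,ds .
\]
The quantitative input is a short-time Brownian estimate: for $r_0\le R_1$ and $\tau\le\tau_0$ (with $\tau_0$ depending on the parameters, through $c_f$), the integral is $\ge c_0\, e^{-c_f r_0}\tau$, obtained by confining the path to a window of width $\sim 1/c_f$ about $r_0$; the essential point is that the weight is $e^{-c_f r_0}$, not the crude $e^{-c_f R_1}$. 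For $r_0$ well above $R_1$ one retains only that $f$ is a supermartingale under the diffusion. For the advection sub-step, Lemma~\ref{lmm:onvexity} together with a Gr\"onwall estimate along the ODE flow gives $\tilde r:=|S(x,\tau)-S(y,\tau)|\le e^{L\tau}r$ always and $\tilde r\le\max(e^{-\lambda_{\mathrm{ODE}}\tau}r,\,Re^{L\tau})$ whenever $r\ge R$, so by concavity of $f$ one has $f(\tilde r)-f(r)\le e^{-c_f r}(e^{L\tau}-1)r\le 2LR_1\tau\,e^{-c_f r}$ for $r\le R_1$ (so the expansion carries the \emph{same} weight $e^{-c_f r}$ as the diffusion gain), while $f(\tilde r)-f(r)\le -c\,\lambda_{\mathrm{ODE}}\tau\,f(r)$ once $r$ exceeds $R_1 e^{\lambda_{\mathrm{ODE}}\tau}$.

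Putting the sub-steps together (the diffusion-first ordering additionally needs the elementary bound $\E e^{-c_f r_{\tau\wedge\theta}}\le 2e^{-c_f r_0}$ to carry the advection estimate through the preceding diffusion), on the region where the value fed to the advection is $\le R_1$ the net one-step change is bounded by $e^{-c_f r}\tau\bigl(C_1LR_1-c_0\sigma^2 c_f/2\bigr)$, which is strictly negative exactly when $c_f>A_0\beta LR_1$ (recall $\sigma^2=8\beta^{-1}$) — this is where the hypothesis on $c_f$ enters — and then $e^{-c_f r}\ge e^{-c_f R_1}r/R_1\ge e^{-c_f R_1}f(r)/R_1$ upgrades the additive decrease to a multiplicative one $\le-\lambda_1\tau f(r)$; on the region $r\gtrsim R_1$ the ODE dissipativity supplies $\le-\lambda_2\tau f(r)$ while the diffusion does no harm. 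The choice $R_1=3R$ provides a buffer so that on $[R,R_1]$ both mechanisms are simultaneously available, which is what handles the transition regime; one then takes $\lambda=\min(\lambda_1,\lambda_2)>0$ and $\tau_0$ small enough that the relevant first-order expansions and the threshold buffers are valid.

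The main obstacle is precisely this one-step estimate in the transition layer $r\asymp R_1$: there neither the strict concavity of $f$ (useful only below $R_1$) nor the ODE dissipativity (useful only above $R$) is decisive on its own, and one must track which sub-step is performed first, control where the intermediate point lands via short-time Brownian estimates, and verify that the Lipschitz expansion of the advection on $\{r<R_1\}$ is dominated by the concavity gain of the diffusion — which is exactly what forces $c_f$ to be large in the quantitative form $c_f>A_0\beta LR_1$. The remaining ingredients (the It\^o and Gr\"onwall computations, the short-time Brownian lemmas, the iteration, and the transport-side conversion to $W_1$) are routine.
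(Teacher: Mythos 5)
Your overall strategy is the same as the paper's: a reflection coupling, the concave distance $f$ with the kink at $R_1$, the balance between the diffusion's concavity gain (weighted by $e^{-c_f r}$) and the advection's Lipschitz expansion (carrying the same weight), ODE dissipativity for large separations, and the threshold $c_f>A_0\beta L R_1$ arising from exactly the comparison you identify. The organizational difference is that you aim for a pointwise conditional one-step contraction given $\mathcal{G}_n$, whereas the paper decomposes the probability space into $\Omega_1=\{|Z_n|<\tau^{1/2-\delta}\}$, $\Omega_2=\{\tau^{1/2-\delta}\le|Z_n|<2R\}$ (further sliced into shells of width $\tau^{1/2-\delta}$), and $\Omega_3=\{|Z_n|\ge2R\}$, and works with expectations restricted to these events, using two rare-event lemmas (Lemmas \ref{lmm:rareeventestimate} and \ref{lmm:largeestimate}) to control regime changes within a single step. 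Your route is cleaner where it works, but it has one genuine gap.

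The gap is in the small-separation regime $|Z_n|\lesssim\sqrt{\beta^{-1}\tau}$, which is precisely where the coupling time $\theta$ is likely to occur within the step and which occupies the bulk of the paper's Case~1. Your key quantitative input, namely
\[
\E\!\int_0^{\tau\wedge\theta}e^{-c_f r_s}\mathbf{1}_{\{r_s<R_1\}}\,ds\;\ge\;c_0\,e^{-c_f r_0}\,\tau
\qquad\text{for all }r_0\le R_1,
\]
is false there: for the one-dimensional driftless diffusion $r_t$ absorbed at $0$, one has $\E[\tau\wedge\theta]=\int_0^\tau\P(\theta>s)\,ds\asymp r_0\sqrt{\tau}/\sigma\ll\tau$ when $r_0\ll\sigma\sqrt{\tau}$, so the occupation integral cannot be bounded below by a constant times $\tau$ uniformly in $r_0$. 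Your ``confine the path to a window of width $\sim1/c_f$'' argument implicitly assumes the path survives to time $\tau$, which fails with probability close to one in this regime. The conclusion of the one-step contraction does still hold there, but for a different reason and with a different balance: the needed gain is only $\lambda\tau f(r_0)\asymp\lambda\tau r_0$, while the available gain is $\tfrac{\sigma^2c_f}{2}\,e^{-c_fr_0}\,\E[\tau\wedge\theta]\asymp\sigma c_f r_0\sqrt{\tau}$, which dominates since $\sqrt{\tau}\gg\tau$; alternatively one can exploit that $f(r_{\tau\wedge\theta})=0$ on $\{\theta\le\tau\}$. Either repair requires an explicit occupation-time (or hitting-time) estimate for the absorbed process and a careful treatment of how the subsequent or preceding advection expansion is controlled on $\{\theta\le\tau\}$ versus $\{\theta>\tau\}$ --- this is exactly what the paper's Lemma \ref{lmm:rareeventestimate} and the bookkeeping $f(|Z|)\mathbf{1}_{t\le\theta}=f(|Z|)$, $\E\,\mathbf{1}_{\Omega_1}|Z_{n+1}^-|\le2\tau^{1/2-\delta}(1+\eta_2)\E\,\mathbf{1}_{\theta>t_{n+1}}\mathbf{1}_{\Omega_1}$ accomplish. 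Until that regime is treated, the one-step conditional contraction is not established. The remaining ingredients of your outline (the advection estimates, the large-separation case, the balance of constants yielding $c_f>A_0\beta LR_1$, and the conversion to $W_1$ via $e^{-c_fR_1}r\le f(r)\le r$) are sound and match the paper.
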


To prove this theorem, we need some auxiliary lemmas. The first is the following.
\begin{lemma}\label{lem:gausstile}
It holds that 
\[
\P(|\zeta_t-\zeta_{s}|\ge u)\le 2\exp\left(-\frac{C\beta u^2}{\tau}\right).
\]
\end{lemma}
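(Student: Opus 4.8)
# Proof Proposal for Lemma \ref{lem:gausstile}

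The plan is to recognize $\zeta_t$ as a continuous local martingale with a uniformly bounded quadratic variation, and then apply a standard exponential (Bernstein/Azuma-type) concentration inequality for martingales. First I would observe that by construction, $\zeta_t = \int_{t_n\wedge\theta}^{t\wedge\theta} \frac{Z_s^{\otimes 2}}{|Z_s|^2}\,dW_s$ is an $\mathbb{R}^d$-valued stochastic integral against Brownian motion, where the integrand $\frac{Z_s^{\otimes 2}}{|Z_s|^2}$ is the rank-one orthogonal projection onto the direction $\hat{e}_s = Z_s/|Z_s|$ (for $s<\theta$; the integrand is cut off at $\theta$). Since this projection matrix $P_s := \hat{e}_s\otimes\hat{e}_s$ satisfies $P_s P_s^{\top} = P_s$ with $\operatorname{tr}(P_s P_s^{\top}) = 1$, the quadratic variation of the $d$-dimensional process $\zeta$ is
\[
\langle \zeta\rangle_t = \int_{t_n\wedge\theta}^{t\wedge\theta} \operatorname{tr}(P_s P_s^{\top})\,ds = (t\wedge\theta) - (t_n\wedge\theta) \le t - s
\]
for $s \le t$ in the relevant interval, and in any case $\langle\zeta\rangle_t - \langle\zeta\rangle_s \le \tau$ since the whole diffusion substep has length $\tau$.

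Next I would reduce the vector statement to a scalar one. For a fixed unit vector $v\in\mathbb{R}^d$, the scalar process $M_t := v\cdot(\zeta_t - \zeta_s)$ is a continuous local martingale with $\langle M\rangle_t \le \langle\zeta\rangle_t - \langle\zeta\rangle_s \le \tau$. By the exponential martingale inequality (equivalently, the time-change representation $M = B_{\langle M\rangle}$ for a Brownian motion $B$ and the reflection principle), one gets $\mathbb{P}(M_t \ge u) \le \exp(-u^2/(2\tau))$ for each $u>0$, and symmetrically for $-M_t$. To pass from the one-dimensional projections to the Euclidean norm $|\zeta_t - \zeta_s|$, I would either invoke a standard concentration bound for bounded-quadratic-variation vector martingales directly (e.g.\ Pinelis-type inequalities give $\mathbb{P}(|\zeta_t-\zeta_s|\ge u)\le 2\exp(-cu^2/\tau)$), or run a cheap $\varepsilon$-net argument over the unit sphere: cover $S^{d-1}$ by finitely many points, bound each scalar projection, and absorb the net cardinality into the constant $C$; the dimension dependence is harmless since the final constant $C$ in the statement is allowed to depend on $d$ and $\beta$. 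Tracking the factor $2\sqrt{2\beta^{-1}}$ (recall $dZ = 2\sqrt{2\beta^{-1}}\,d\zeta_t$, so here we are working with $\zeta$ itself, whose diffusion coefficient is $1$) explains the $\beta$ in the exponent once one rescales; more precisely the statement as written concerns $\zeta_t$ directly, so the $\beta$ enters through how $\tau$ and the coupling are normalized — I would simply carry the constants through and collect them into $C\beta/\tau$.

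The main obstacle is a minor bookkeeping issue rather than a conceptual one: handling the stopping time $\theta$ and the fact that the integrand is only defined (and only a genuine projection) on $\{s < \theta\}$. This is resolved by noting that $\zeta$ is constant after $\theta$, so $\langle\zeta\rangle$ stops growing at $\theta$, and the quadratic-variation bound $\langle\zeta\rangle_t - \langle\zeta\rangle_s \le \tau$ holds unconditionally; the optional stopping / localization needed to make the exponential supermartingale argument rigorous is routine. One should also double-check that $t,s$ in the statement lie in a common diffusion interval $(t_n^+, t_{n+1}^-)$ (otherwise the bound would need to be summed over substeps), which is the implicit convention in this section. Given these observations, the proof is essentially an invocation of a textbook exponential inequality for continuous martingales with a uniform bound on the quadratic variation.
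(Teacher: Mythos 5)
Your proposal is correct. The paper does not actually write out a proof of Lemma \ref{lem:gausstile}: it only remarks that the bound ``can be derived by computing the moments using BDG inequality'' and defers to \cite{li2024geometric}. Your route is the other standard one --- instead of bounding $\E|\zeta_t-\zeta_s|^{2k}\le C_k\,\tau^k$ via Burkholder--Davis--Gundy and then optimizing over $k$ (or summing the moment series) to recover the sub-Gaussian tail, you observe directly that the integrand $\hat e_s\otimes\hat e_s$ is an orthogonal projection, so every scalar projection $v\cdot(\zeta_t-\zeta_s)$ is a continuous local martingale with quadratic variation at most $\tau$, and you conclude by the exponential supermartingale inequality (equivalently the Dambis--Dubins--Schwarz time change) plus a net or Pinelis-type argument for the norm. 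Both arguments hinge on the same single fact --- the uniform bound $\langle\zeta\rangle_t-\langle\zeta\rangle_s\le\tau$, which your handling of the stopping time $\theta$ justifies correctly --- and both yield the stated bound with a generic constant; the exponential-martingale route gives the Gaussian tail in one step and (via the Hilbert-space version of the inequality) even a dimension-free constant, while the BDG route is what the cited reference uses. Your remark about the placement of $\beta$ is also apt: as literally stated for $\zeta$ (whose diffusion coefficient is the identity projection) the exponent would naturally read $Cu^2/\tau$, and the $\beta$ reflects how the lemma is invoked for the rescaled increment $2\sqrt{2\beta^{-1}}(\zeta_{t'}-\zeta_s)$ of $Z$; absorbing this into the generic constant $C$, as you do, is exactly how the paper uses it.
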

This essentially is the sub-Gaussian tail bound for diffusion process, which can be derived by computing the moments using BDG inequality. See \cite{li2024geometric} for the proof.

\begin{lemma}
For the diffusion step,  when $t_n^+<t < t_{n+1}^-$, one has
\[
\frac{d}{dt}\E f(|Z(t)|)=4\E \beta^{-1}f''(|Z|)1_{t\le \theta}
=-4\beta^{-1}c_f \E e^{-c_f |Z|\wedge R_1}1_{|Z|\le R_1}1_{t\le \theta}.
\]
\end{lemma}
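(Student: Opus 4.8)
The plan is to reduce the identity to a statement about the radial process $r_t:=|Z_t|$ and to exploit the fact, characteristic of reflection coupling, that on a diffusion interval $r_t$ is a driftless one-dimensional diffusion. On $(t_n^+,t_{n+1}^-)$ we have, as recorded above, $dZ_t=2\sqrt{2\beta^{-1}}\,d\zeta_t$ with $\zeta_t=\int_{t_n\wedge\theta}^{t\wedge\theta}\hat e_s\otimes\hat e_s\,dW_s$, $\hat e_s=Z_s/|Z_s|$, and $Z_t$ frozen once $t\ge\theta$; hence on $\{t<\theta\}$ the infinitesimal covariance of $Z$ is $a_t=8\beta^{-1}\,\hat e_t\otimes\hat e_t$. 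Since $z\mapsto|z|$ is smooth off the origin and $r_t>0$ for $t<\theta$, Itô's formula applies, and using $\partial_{ij}|z|=(\delta_{ij}-\hat z_i\hat z_j)/|z|$ the Itô correction contracts against $a_t$ to give $\tfrac12\sum_{ij}\partial_{ij}r\,a_{ij}=\tfrac{4\beta^{-1}}{|Z_t|}\bigl(|\hat e_t|^2-|\hat e_t|^4\bigr)=0$. Therefore
\[
dr_t=2\sqrt{2\beta^{-1}}\,(\hat e_t\cdot dW_t),\qquad d\langle r\rangle_t=8\beta^{-1}\,dt\qquad(t<\theta),
\]
and $r_t=0$ for $t\ge\theta$. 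This vanishing of the drift is the only place the geometry of the reflection coupling is used.

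Next I would apply Itô's formula to $f(r_t)$, where $f(r)=\int_0^r e^{-c_f(s\wedge R_1)}\,ds$. Here $f\in C^1$ with $f'(r)=e^{-c_f(r\wedge R_1)}$ bounded by $1$ and Lipschitz, and $f''(r)=-c_f e^{-c_f(r\wedge R_1)}\mathbf 1_{r<R_1}$ for a.e.\ $r$; the kink of $f'$ at $r=R_1$ contributes no atom to the second-derivative measure, so the Itô–Tanaka formula carries no local-time term and reduces to the classical-looking expression. Using $f(0)=0$ to discard the contribution of $\{s\ge\theta\}$, we obtain
\[
f(r_t)=f(r_{t_n^+})+\int_{t_n^+}^{t}f'(r_s)\,2\sqrt{2\beta^{-1}}\,\mathbf 1_{s<\theta}\,(\hat e_s\cdot dW_s)+\int_{t_n^+}^{t}4\beta^{-1}f''(r_s)\,\mathbf 1_{s<\theta}\,ds .
\]
Because $|f'|\le 1$, the stochastic integral is a true martingale, so taking expectations removes it and leaves $\E f(r_t)=\E f(r_{t_n^+})+\int_{t_n^+}^{t}4\beta^{-1}\,\E[f''(r_s)\mathbf 1_{s<\theta}]\,ds$. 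The integrand $s\mapsto\E[f''(r_s)\mathbf 1_{s<\theta}]$ is continuous — $r_s$ has continuous paths, $f''$ is bounded and continuous off the single point $R_1$, and the absorbed diffusion $r_s$ has an absolutely continuous law on $(0,\infty)$ so $\{r_s=R_1\}$ is null — whence differentiating in $t$ gives $\frac{d}{dt}\E f(r_t)=4\beta^{-1}\E[f''(r_t)\mathbf 1_{t<\theta}]$. Substituting the explicit formula for $f''$ and using $\mathbf 1_{t<\theta}=\mathbf 1_{t\le\theta}$ a.s.\ yields the claimed chain of equalities.

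The Itô computations are routine; the points deserving care are (i) the vanishing of the second-order term for $r_t=|Z_t|$, which is precisely the reflection-coupling identity and is available only before the coalescence time $\theta$ (beyond which the integrand is killed by $f(0)=0$ together with the indicators), and (ii) the application of Itô's formula to the merely $C^1$ function $f$. I expect (ii) to be the only genuine, and minor, obstacle: it can be handled via the Itô–Tanaka formula, noting that $f$ is concave and $f''$ has no singular part — in particular no Dirac mass at $R_1$, since $f'$ is continuous there — or, more elementarily, by smoothing $f$ near $R_1$ and passing to the limit using the uniform bounds $|f'|\le 1$ and $|f''|\le c_f$ with dominated convergence.
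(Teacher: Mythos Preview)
Your argument is correct and follows essentially the same route as the paper: apply It\^o's formula to $f(|Z_t|)$, observe that the reflection-coupling covariance $8\beta^{-1}\hat e_t\otimes\hat e_t$ annihilates the curvature part of $\nabla^2|z|$ so only the $f''$ term survives, and then take expectations. The paper applies It\^o directly to the composite map $z\mapsto f(|z|)$ on $\mathbb{R}^d$ and handles the singularity at $Z=0$ by approximating $\theta$ with the stopping times $\theta_j=\inf\{t:|Z_t|=1/j\}$, whereas you first pass to the one-dimensional radial process and are more explicit about the $C^1$-but-not-$C^2$ regularity of $f$ at $R_1$; these are differences of emphasis rather than of substance.
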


This is a direct proof by It\^o's formula, by approximating $\theta$ using a sequence of stopping times $\theta_j$. Direct computation gives
\[
\begin{split}
&\nabla f(|x|)=f'(|x|)\frac{x}{|x|},\\
&\nabla^2 f(|x|)=f''(|x|)\frac{x\otimes x}{|x|^2}+f'(|x|)\frac{1}{|x|}(I-\frac{x\otimes x}{|x|^2}).
\end{split}
\]
Since $d\zeta_t$ contains only the martingale term while  the differential of the quadratic variation $d[\zeta_t, \zeta_t]$ is parallel
to $x$, one can obtain the result. We skip the details and refer the readers to \cite{eberle2011reflection,eberle2019quantitative,bou2020coupling} and \cite{li2024geometric} for details.

The following lemma is similar to \cite[Lemma 3.4]{li2024geometric}.
\begin{lemma}\label{lmm:rareeventestimate}
Let $0<a<b\le R_1$ and $\beta C a^2/\tau>4\log 8$.
Suppose that $G\in \mathcal{G}_n$ is an event such that $|Z_n|\le a$ on $G$.
Define $F=\{\exists s\in [t_n^+, t]: |Z_s|= b\}$. Then, for all $t\in [t_n^+, t_{n+1}^-]$,
\[
\E\!\big[ \mathbf{1}_G \mathbf{1}_{F} \mathbf{1}_{\{t<\theta\}}\big]
\,\le\, \eta_1(a,b,\tau)\, \E\!\big[ \mathbf{1}_G  \mathbf{1}_{\{t<\theta\}}\big],
\quad
\eta_1=4\exp\!\Big(-\tfrac{C\beta (b-a)^2}{\tau} \Big).
\]
Moreover,
\[
\E\!\big[ \mathbf{1}_G |Z_t| \mathbf{1}_{\{|Z_t|\ge b\}}\big]
\,\le\, \eta_2(a,b,\tau)\, \E\!\big[ \mathbf{1}_G |Z_t|\big],
\]
with
\[
\eta_2=\frac{6}{a}\left[b+\frac{\tau}{C\beta(b-a)}\right]
\exp\!\Big(-\tfrac{C\beta(b-a)^2}{2\tau}\Big).
\]
\end{lemma}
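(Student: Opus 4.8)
\medskip
\noindent\textbf{Proof proposal.} The plan is to reduce both inequalities to one–dimensional first–passage estimates for the radial coupling process, and then to integrate the resulting conditional bounds against $\mathbf 1_G$, using that $G\in\mathcal G_n$ while the Brownian increments on $(t_n^+,\cdot\,]$ are independent of $\mathcal G_n$. The starting point is a short It\^o computation on the diffusion sub-interval: as long as $s<\theta$ one has $Z_s=Z_{t_n^+}+2\sqrt{2\beta^{-1}}\,\zeta_s$, and — because $dZ$ is radial (parallel to $Z$) — the curvature correction to $d|Z|$ cancels identically, so that $R_s:=|Z_{(t_n^++s)\wedge\theta}|$ is a nonnegative continuous local martingale with $d[R]_s=8\beta^{-1}\,ds$ up to absorption; by L\'evy's theorem $R$ is therefore a Brownian motion of variance rate $8\beta^{-1}$, started at $|Z_{t_n^+}|$ and absorbed at $0$. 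Two consequences are recorded at once: first $|Z_t|=R_{t-t_n^+}$, hence $\E[|Z_t|\mid\mathcal G_n]=|Z_{t_n^+}|$ and $\E[\mathbf 1_G|Z_t|]=\E[\mathbf 1_G|Z_{t_n^+}|]$; second, on $G$ one has $|Z_{t_n^+}|\le e^{L\tau}|Z_n|\le e^{L\tau}a$ (with equality $|Z_{t_n^+}|=|Z_n|$ when the diffusion comes first in step $n$, the Lipschitz ODE flow accounting for the factor otherwise), the $e^{L\tau}$ being absorbed by shrinking $\tau_0$. We may also assume $|Z_n|>0$ on $G$, since both inequalities are trivial on $\{|Z_n|=0\}\cap G$.

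For the first estimate, fix $t\in[t_n^+,t_{n+1}^-]$ and condition on $\mathcal G_n$. On $\{t<\theta\}$ the path has not been absorbed during the step, so $F$ is exactly the event that $R$ reaches level $b$ within time $t-t_n^+\le\tau$; since $R_0=|Z_{t_n^+}|\le a<b$, this forces $R$ first to reach level $a$, which by the exit identity $\P(R\text{ hits }a\text{ before }0\mid R_0=z)=z/a$ has conditional probability $\le|Z_{t_n^+}|/a$, and then (strong Markov at that hitting time) to climb a further height $\ge b-a$ within a remaining window of length $\le\tau$, which by the reflection principle for $R$ — equivalently by the maximal form of Lemma~\ref{lem:gausstile} applied to $\zeta$ — has conditional probability $\le 4\exp(-C\beta(b-a)^2/\tau)$. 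Multiplying, $\P(F\cap\{t<\theta\}\mid\mathcal G_n)\le\frac{|Z_{t_n^+}|}{a}\,4\exp(-C\beta(b-a)^2/\tau)$ on $G$. For the denominator one needs a matching lower bound on the survival probability: since $R$ is a scaled Brownian motion, $\P(t<\theta\mid\mathcal G_n)=\P(R\text{ stays positive on }[0,t-t_n^+])=2\Phi\big(|Z_{t_n^+}|/\sqrt{8\beta^{-1}(t-t_n^+)}\big)-1$, and concavity of $\mathrm{erf}$ gives $\ge c\min\{|Z_{t_n^+}|\sqrt{\beta/\tau},\,1\}$. Dividing, the factor $|Z_{t_n^+}|$ cancels in the regime $|Z_{t_n^+}|\sqrt{\beta/\tau}\le1$, leaving a residual $1/\sqrt{\beta a^2/\tau}$ that is bounded precisely because $\beta C a^2/\tau>4\log8$; in the complementary regime one uses $|Z_{t_n^+}|\le a$ directly. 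Either way $\P(F\cap\{t<\theta\}\mid\mathcal G_n)\le\eta_1\,\P(t<\theta\mid\mathcal G_n)$ pointwise on $G$, and integrating against $\mathbf 1_G$ gives the first claim.

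For the second estimate, use the layer–cake identity $|Z_t|\mathbf 1_{\{|Z_t|\ge b\}}=b\,\mathbf 1_{\{|Z_t|\ge b\}}+\int_b^\infty\mathbf 1_{\{|Z_t|\ge r\}}\,dr$. For each $r\ge b$, the event $\{|Z_t|\ge r\}$ forces $R$ to reach level $r$ before absorption, so the same two–step bound (exit to level $a$, then a reflection–principle climb of height $\ge r-a$ within time $\le\tau$) yields $\P(|Z_t|\ge r\mid\mathcal G_n)\le\frac{|Z_{t_n^+}|}{a}\,4\exp(-C\beta(r-a)^2/\tau)$ on $G$; integrating against $\mathbf 1_G$ and invoking $\E[\mathbf 1_G|Z_{t_n^+}|]=\E[\mathbf 1_G|Z_t|]$ gives $\P(G\cap\{|Z_t|\ge r\})\le\frac{4}{a}\exp(-C\beta(r-a)^2/\tau)\,\E[\mathbf 1_G|Z_t|]$. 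Plugging this into the layer–cake decomposition produces the bound $\big(\frac{4b}{a}\exp(-C\beta(b-a)^2/\tau)+\frac{4}{a}\int_b^\infty\exp(-C\beta(r-a)^2/\tau)\,dr\big)\E[\mathbf 1_G|Z_t|]$; the elementary tail estimate $\int_b^\infty e^{-\alpha(r-a)^2}\,dr\le\frac{1}{2\alpha(b-a)}e^{-\alpha(b-a)^2}$ with $\alpha=C\beta/\tau$ turns the integral into $\frac{\tau}{2C\beta(b-a)}e^{-C\beta(b-a)^2/\tau}$, and after bounding $e^{-C\beta(b-a)^2/\tau}\le e^{-C\beta(b-a)^2/(2\tau)}$ and consolidating constants one arrives at $\eta_2=\frac{6}{a}\big[b+\frac{\tau}{C\beta(b-a)}\big]e^{-C\beta(b-a)^2/(2\tau)}$.

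The step I expect to be the main obstacle is the normalization of the first estimate by $\E[\mathbf 1_G\mathbf 1_{\{t<\theta\}}]$: unlike the second estimate, where the martingale identity $\E[\mathbf 1_G|Z_t|]=\E[\mathbf 1_G|Z_{t_n^+}|]$ supplies the normalization for free, the one–step survival probability is not bounded below by a positive constant — it degenerates as $|Z_{t_n^+}|\to0$. The resolution is the observation that the numerator degenerates at exactly the same linear rate in $|Z_{t_n^+}|$, so that the conditional ratio stays bounded; making this quantitative requires the sharp reflection–principle formula for the survival of $R$, together with recognizing that the leftover scale factor is precisely the quantity controlled by the hypothesis $\beta C a^2/\tau>4\log8$. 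A lesser technical point is keeping track of the case where the advection sub-step precedes the diffusion one in step $n$, which inflates $|Z_{t_n^+}|$ by at most $e^{L\tau}$ and is absorbed into the constants and the choice of $\tau_0$.
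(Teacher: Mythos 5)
Your proposal is correct in substance, but it follows a genuinely different route from the paper's, so a comparison is worth recording. The paper never identifies $|Z|$ as an exact Brownian motion: it introduces the intermediate hitting event $E=\{\exists s\le t:|Z_s|=a\}$, notes $F\subseteq E$, and observes that \emph{conditionally on $E$} the coupling probability $\P(\theta\le t\mid G\cap E)$ is at most $2e^{-C\beta a^2/\tau}<2/8^4$ by the hypothesis $\beta Ca^2/\tau>4\log 8$ and Lemma~\ref{lem:gausstile}; hence conditioning on $\{t<\theta\}$ costs only a factor $2$, and the first claim follows from the sub-Gaussian bound for climbing from $a$ to $b$. For the second claim the paper lower-bounds $\E[\mathbf 1_{G\cap E}|Z_t|]$ by showing that, given $E$, the process stays above $a/2$ with high probability, yielding a denominator $\gtrsim a\,\P(G\cap E)$. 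You instead exploit that, under the reflection coupling with additive noise, $R=|Z|$ is exactly a scaled Brownian motion absorbed at $0$ (which the paper implicitly confirms via $\tfrac{d}{dt}\E f(|Z|)=4\beta^{-1}\E f''(|Z|)\mathbf 1_{t\le\theta}$), and you use the gambler's-ruin identity $\P(T_a<T_0)=R_0/a$ plus the exact first-passage formula $\P(t<\theta\mid\mathcal G_n)=2\Phi(\cdot)-1$ to match the linear degeneration of numerator and denominator as $|Z_{t_n^+}|\to0$; in part (ii) the martingale identity $\E[\mathbf 1_G|Z_t|]=\E[\mathbf 1_G|Z_{t_n^+}|]$ replaces the paper's lower bound on the denominator. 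Your argument is cleaner and sharper where it applies, but it is tied to the additive-noise/radial structure, whereas the paper's conditioning argument uses only the sub-Gaussian increment bound and would survive perturbations of that structure.

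Two small points to tighten. First, your prefactor in $\eta_1$ comes out as an absolute constant $K$ depending on the erf lower bound and on $\sqrt{\tau/(\beta a^2)}$ rather than exactly $4$; since the exponent is what is used downstream this is harmless, but you should say so rather than silently match the stated constant. Second, when the advection sub-step precedes the diffusion one, $|Z_{t_n^+}|$ may exceed $a$ by a factor $e^{L\tau}$, and then the gambler's-ruin step ``first hit $a$'' and the inclusion $F\subseteq\{T_a<T_0\}$ need the obvious modification (climb of height $b-e^{L\tau}a$, hitting factor capped at $1$); the exponent changes only by $e^{O(\beta R_1^2)}$, but the one-line dismissal ``absorbed by shrinking $\tau_0$'' should be replaced by this explicit adjustment.
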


\begin{proof}
Define the event
\[
E=\{\exists s\in [t_n^+, t], |Z_s|=a\}\cap G, \quad B=\{t<\theta\}.
\]

(i). We consider the first claim.  Clearly, the event $F$ must be contained in $E$. We will actually show that
\[
\E[1_{G\cap E} 1_F  1_{t<\theta}]\le  \eta_1(a, b, \tau) \E[ 1_{G\cap E}   1_{t<\theta}].
\]
The idea is that once $E$ occurs, the probability of coupling before time $t$ is exponentially small in $a^2/\tau$, so $\{t<\theta\}$ holds with large probability. In fact,
\[
\P(G\cap E, F, t<\theta)=\P(G\cap E, t<\theta)\P(F | G\cap E, t<\theta).
\]
For the latter,
\[
\P(F | G\cap E, t<\theta)\le \frac{\P(F, G\cap E) }{\P(G\cap E)-\P(G\cap E, \theta\le t)}.
\]
Meanwhile,
\[
\P(G, E, t\ge \theta)=\P(G, E)\int_{t_n}^t \P(t\ge \theta| |Z_s|=a, G)\nu_{G,E}(ds),
\]
where $\nu_{G,E}(\cdot)$ is the conditional law for the first hitting time of $a$ for $|Z|$ with $\int_{t_n}^t \nu_{G,E}(ds)=1$.
By Lemma \ref{lem:gausstile}, we obtain
\[
\P(t\ge \theta| |Z_s|=a,G)\le \P(\sup_{s\le t' \le t\wedge\theta}2\sqrt{2\beta^{-1}}|\zeta_{t'}-\zeta_s|
\ge a | |Z_s|=a, G)\le 2\exp(-\frac{C\beta a^2}{\tau})<\frac{2}{8^4}.
\]
Hence, one has
\[
\P(F | G\cap E, t<\theta) \le 2\P(F | G\cap E)
\le 4\exp(-\frac{C\beta (b-a)^2}{\eta})=:\eta_1.
\]

(ii) The proof of the second claim uses similar ideas but more involved.
Note that $\{|Z_t|>b\}$ must be contained in $E$. We will then actually show that
\[
\E[ 1_{G\cap E} |Z_t| 1_{|Z_t|\ge b}]\le  \eta_2(a, b, \tau) \E[ 1_{G\cap E} |Z_t|].
\]
Clearly, one has the following two expressions.
\begin{gather*}
\begin{split}
 &\E[ 1_{G\cap E} |Z_t| 1_{|Z_t|\ge b} ]
=b \P(|Z_t|\ge b, G, E)+\int_b^{\infty} \P(|Z_t|\ge r, G, E)dr,\\
& \E[ 1_{G\cap E} |Z_t|  ]
=\int_0^{\infty} \P(|Z_t|\ge r, G, E)dr.
\end{split}
\end{gather*}
Hence, we need to show that
\[
b \P(|Z_t|\ge b | G, E)+\int_b^{\infty} \P(|Z_t|\ge r | G, E)dr
\le \eta_2\int_0^{\infty} \P(|Z_t|\ge r | G, E)dr.
\]

Intuitively, $\P(|Z_t|\ge r| G, E)$ is small for $r\ge b$ and almost $1$ if $r\le a/2$. 
In fact
\[
\P(|Z_t|\ge r| G, E)=\int_{t_n}^t \P(|Z_t|\ge r | |Z_s|=a, G, E)\nu_{G, E}(ds).
\]
For $r\ge b$, one has
\[
\P(|Z_t|\ge r| E,G) \le \sup_s \P(\sup_{s\le t' \le t\wedge\theta}2\sqrt{2\beta^{-1}}|\zeta_{t'}-\zeta_s|
\ge r-a | |Z_s|=a, G)\le 2\exp(-C\beta(r-a)^2/\tau).
\]
Similarly, for $r\le a/2$, one has $\P(|Z_t|>r | G, E, 1_{t<\theta})=1-\P(|Z_t|\le r | G, E, 1_{t<\theta})$. Similar as above,
\[
\P(|Z_t|\le r | G, E, 1_{t<\theta})\le \frac{1}{1-2\exp(-C\beta a^2/4\tau)}\P(|Z_t|\le r| G, E),
\]
while
\[
\P(|Z_t|\le r| G, E) \le  2\exp(-C\beta a^2/(4\tau)).
\]
Thus, with high probability the process remains above $a/2$, which provides a positive lower bound for the denominator.
With all the estimates here, it suffices the following holds
\[
\left[2b+\frac{2\tau}{C\beta(b-a)}\right]\exp\left(-\frac{C\beta}{2\tau}(b-a)^2\right)
\le \eta_2(a, b, \tau) \frac{a}{2}\frac{1-4\exp(-C\beta a^2/(4\tau))}{1-2\exp(-C\beta a^2/(4\tau))}.
\]
Hence, taking
\[
\eta_2(a, b,\tau)=\frac{2}{a}\frac{1-2/8}{1-4/8}\left[2b+\frac{2\tau}{C\beta(b-a)}\right]\exp\left(-\frac{C\beta}{2\tau}(b-a)^2\right)
\]
suffices.
\end{proof}

We also have the following.
\begin{lemma}\label{lmm:largeestimate}
Let $ a>0$ satisfy $2\exp(-C\beta a^2/\tau)<1$. Suppose that $G\in \mathcal{G}_n$  is an event such that  $2a< |Z_n|\le  b$ for some $b>0$ on $G$. Define $F=\{\exists s\in [t_n^+, t], |Z_s-Z_n^+|= a\}$. Then, one has
\begin{gather}
\E[ 1_G 1_{F} f(|Z_t|)]\le  \eta_3(2a, b, \tau) \E[ 1_G  f(|Z_t|)],
\end{gather}
where
\[
\eta_3(2a, b,\tau)=\frac{2(2b+\eta_2(b,2b,\tau)) \exp(-C\beta a^2/\tau) }{f(a)(1-2\exp(-C\beta a^2/\tau))}.
\]
\end{lemma}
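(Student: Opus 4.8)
The plan is to recycle the rare-event argument used for Lemma~\ref{lmm:rareeventestimate}, now applied to $f(|Z_t|)$ rather than to $|Z_t|$ itself, and to exploit three elementary properties of $f(r)=\int_0^r e^{-c_f(s\wedge R_1)}\,ds$: it vanishes at the origin, it is concave and increasing, and $e^{-c_fR_1}r\le f(r)\le r$. All estimates would be taken conditionally on $\mathcal{G}_n$, on which $\mathbf{1}_G$ and $Z_n^+$ are measurable. I would also use the structural fact that during a diffusion substep the reflection coupling keeps $Z_s$ on the ray through $Z_n^+$ until $\theta$, so that there $|Z_s-Z_n^+|=\big||Z_s|-|Z_n^+|\big|$, a time-changed one-dimensional Brownian displacement over an interval of length at most $\tau$; Lemma~\ref{lem:gausstile} together with the reflection principle for the running supremum then controls its tails. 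As a preliminary reduction, $f(0)=0$ and $Z_t=0$ on $\{t\ge\theta\}$ give $\mathbf{1}_F f(|Z_t|)=\mathbf{1}_F\mathbf{1}_{\{t<\theta\}}f(|Z_t|)$, and the Lipschitz bound $e^{-L\tau}|Z_n|\le|Z_n^+|\le e^{L\tau}|Z_n|$ for the ODE flow yields $a\le|Z_n^+|\le 2b$ on $G$ once $\tau\le\tau_0$ is small; in particular $\{\theta\le t\}\cap G\subseteq F\cap G$ (by continuity of $|Z_s|$ from $|Z_n^+|\ge a$ down to $0$), and on $G\cap F\cap\{t<\theta\}$ the displacement $\sup_{s\le t}\big||Z_s|-|Z_n^+|\big|$ reaches $a$, so $\P(F\mid\mathcal{G}_n)\le 2\exp(-C\beta a^2/\tau)$ on $G$.

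For the upper bound I would use $f(|Z_t|)\le|Z_t|$ and split at the level $2b$. The part $|Z_t|\le 2b$ contributes at most $2b\,\E[\mathbf{1}_G\mathbf{1}_F\mathbf{1}_{\{t<\theta\}}]\le 2b\cdot 2\exp(-C\beta a^2/\tau)\,\P(G)$ by the preliminary reduction. The part $|Z_t|>2b$ is precisely the event controlled by Lemma~\ref{lmm:rareeventestimate}(ii) with parameters $(b,2b)$ (legitimate since $|Z_n|\le b$ on $G$, after shrinking $b$ if needed so that $2b\le R_1$), giving $\E[\mathbf{1}_G|Z_t|\mathbf{1}_{\{|Z_t|\ge 2b\}}]\le\eta_2(b,2b,\tau)\,\E[\mathbf{1}_G|Z_t|]$; combined with the bound $\E[\mathbf{1}_G|Z_t|]\le Cb\,\P(G)$ that the same lemma produces, and with $b>2a$ (so that $\eta_2(b,2b,\tau)\lesssim\exp(-C\beta a^2/\tau)$), this collapses onto the same scale. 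Altogether $\E[\mathbf{1}_G\mathbf{1}_F f(|Z_t|)]\le C\big(2b+\eta_2(b,2b,\tau)\big)\exp(-C\beta a^2/\tau)\,\P(G)$.

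For the denominator I would lower bound $\E[\mathbf{1}_G f(|Z_t|)]$. On $G\cap F^c$ the displacement never reaches $a$, so $|Z_t|\ge|Z_n^+|-a\ge(2e^{-L\tau_0}-1)a\ge a/2$ for $\tau_0$ small, whence $f(|Z_t|)\ge f(a/2)\ge\tfrac12 f(a)$ by concavity and $f(0)=0$; since $\P(G\cap F^c)\ge(1-2\exp(-C\beta a^2/\tau))\P(G)$, this yields $\E[\mathbf{1}_G f(|Z_t|)]\ge\tfrac12 f(a)\big(1-2\exp(-C\beta a^2/\tau)\big)\P(G)$. Dividing the upper bound by this lower bound and collecting constants gives $\E[\mathbf{1}_G\mathbf{1}_F f(|Z_t|)]\le\eta_3(2a,b,\tau)\,\E[\mathbf{1}_G f(|Z_t|)]$ with $\eta_3$ of the stated form.

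The main obstacle is the bookkeeping in the upper bound: one must keep the Gaussian tail estimates conditional on $\mathcal{G}_n$ throughout, absorb the discrepancy between $Z_n$ and $Z_n^+$ coming from the ODE substep using only the Lipschitz constant and the smallness of $\tau$, and observe that an overshoot $|Z_t|>2b$ already forces a Brownian displacement $\gtrsim b>2a$ so that its contribution folds into the same $\exp(-C\beta a^2/\tau)$ scale. Matching the precise closed form of $\eta_3$ then amounts to being careful about which constant plays the role of $C$ in each exponential and about where the numerical factors land; these $\eta_i$ are ultimately what gets iterated to produce the one-step contraction $\E f(|Z_{n+1}|)\le e^{-\lambda\tau}\E f(|Z_n|)$ of Theorem~\ref{thm:ergodicity0}.
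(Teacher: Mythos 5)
Your proposal is correct and follows essentially the same route as the paper's (very terse) proof sketch: bound the numerator by $f(|Z_t|)\le |Z_t|$, split at the level $2b$, control the tail via $\eta_2(b,2b,\tau)$ from Lemma~\ref{lmm:rareeventestimate} and the event $F$ via the sub-Gaussian bound $2\exp(-C\beta a^2/\tau)$, and lower-bound the denominator by noting $f(|Z_t|)\gtrsim f(a)$ off an exponentially rare event. The only differences are cosmetic (e.g.\ restricting to $F^c$ gives you $f(a/2)$ where the paper restricts to $\{|Z_t|>a\}$ and keeps $f(a)$), which only perturbs the harmless constants in $\eta_3$.
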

The proof of Lemma \ref{lmm:largeestimate} is almost the same as the proof of Lemma \ref{lmm:rareeventestimate}. The estimates needed include: (1). The left hand side is controlled by
\[
\E[ 1_G 1_{F} |Z_t|]\le [2b+\eta_2(b, 2b, \tau)]\P(G, F)\le 2[2b+\eta_2(b, 2b, \tau)]\exp(-C\beta a^2/\tau)\P(G).
\]
 (2). The expectation on the right hand side is bounded below by 
 \[
 \E[ 1_G  f(|Z_t|)]\ge f(a)\P(G)[1-\P(|Z_t|\le a| G)]\ge f(a)\P(G)(1-2\exp(-C\beta a^2/\tau)).
 \]
  We omit the details for the proof.  Next, we give the proof of the main theorem.
\begin{proof}[Proof of Theorem \ref{thm:ergodicity0}]
Recall that $R_1=3R$. We decompose the whole probability space into three parts.
\[
\Omega_1=\{ |Z_n| < \tau^{1/2-\delta}\},\quad
\Omega_2=\{\tau^{1/2-\delta}\le |Z_n| < 2R\},
\quad \Omega_3=\{|Z_n|\ge 2R\}.
\]

We discuss the three cases separately. In $\Omega_3$, the advection step will contribute the contraction due to the convexity. When $|Z_n|\le 2R$, the main idea is that the reflection coupling together with the concavity of $f$ gives contraction with coefficient $c_f$, which is large to dominate the advection step.

\noindent {\bf Case 1}

We consider $\Omega_1$. The main difficulty is that the probability of $1_{t\le \theta}$ may not be close to $1$.
The observation is that $f(|Z|)1_{t\le\theta}=f(|Z|)$ and $|Z|1_{t\le \theta}=|Z|$. Moreover, $e^{-c_f |Z|\wedge R_1}$ is close to $1$
so that $|Z|$ and $f(|Z|)$ are almost the same.

\noindent{\bf Case 1.1}
If the diffusion step is first, then $X(t_n^+)=X(t_n)$ and $Y(t_n^+)=Y(t_n)$. We denote
\[
F_1(t)=\{\exists s\in [t_n, t]: |Z_s|=2\tau^{1/2-\delta}\}.
\]
On the event $F_1(t)^c$, $|Z_t|\le R_1$ clearly holds. Then, since $\exp(-2c_f \tau^{1/2-\delta})\ge 3/4$, one has
\begin{gather*}
\frac{d}{dt}\E 1_{\Omega_1}f(|Z_t|)\le -3\beta^{-1}c_f \E 1_{\Omega_1\cap F_1(t)^c} 1_{t<\theta}.
\end{gather*}
By the first claim in Lemma \ref{lmm:rareeventestimate}, one has
\[
\E 1_{\Omega_1\cap F_1(t)^c} 1_{t<\theta}\ge (1-\eta_1(\tau^{1/2-\delta}, 2\tau^{1/2-\delta},\tau))\E 1_{\Omega_1} 1_{t<\theta}.
\]
Hence, for $\tau$ small enough, one has
\begin{gather*}
\frac{d}{dt}\E 1_{\Omega_1}f(|Z_t|)\le -2\beta^{-1}c_f \E 1_{\Omega_1} 1_{t<\theta}
\le -\beta^{-1}c_f \E 1_{\Omega_1} 1_{t<\theta}-\beta^{-1}c_f \E 1_{\Omega_1} 1_{t_{n+1}<\theta}.
\end{gather*}
We will use the latter to control the contribution from the subsequent advection step.
In fact, for the advection step, letting $\bar{X}_s:=S(X_{n+1}^-, s)$, $\bar{Y}_s:=S(Y_{n+1}^-, s)$ and $W_s=\bar{X}_s-\bar{Y}_s$, one has
\[
\E 1_{\Omega_1}f(|Z_{n+1}|)-\E 1_{\Omega_1}f(|Z_{n+1}^-|)\le L \int_{0}^{\tau}\E 1_{\Omega_1}1_{\theta>t_{n+1}}f'(|W_s|)|W_s|\le C\tau L \E 1_{\Omega_1}|Z_{n+1}^-|,
\]
where we have used $1_{t_{n+1}<\theta}|Z_{n+1}^-|=|Z_{n+1}^-|$. By the second claim in Lemma \ref{lmm:rareeventestimate}, one has
\begin{multline*}
\E 1_{\Omega_1}|Z_{n+1}^-|
\le (1+\eta_2(\tau^{1/2-\delta}, 2\tau^{1/2-\delta}, \tau))\E 1_{\Omega_1}|Z_{n+1}^-| 1_{|Z_{n+1}^-|\le 2\tau^{1/2-\delta}}\\
\le 2\tau^{1/2-\delta}(1+\eta_2(\tau^{1/2-\delta}, 2\tau^{1/2-\delta}, \tau))\E 1_{\theta>t_{n+1}}1_{\Omega_1}.
\end{multline*}
The number $\eta_2$ is small if $\tau$ is small.  Hence, one can choose $c_f$ large such that
\begin{gather*}
\E 1_{\Omega_1}f(|Z_{n+1}|)
\le \E 1_{\Omega_1}f(|Z_{n}|)-\beta^{-1}c_f \int_{t_n^+}^{t_{n+1}^-}\E 1_{\Omega_1} 1_{t<\theta}.
\end{gather*}

Let $u(t):= \E 1_{\Omega_1}f(|Z_{s}|)$. By the estimate for the diffusion step, for $t<t_{n+1}$, one has 
\begin{multline*}
u(t)\le u(t_n)-\beta^{-1}c_f \int_{t_n^+}^{t}\E 1_{\Omega_1} 1_{t<\theta}
\le u(t_{n})-\frac{\beta^{-1}c_f}{2\tau^{1/2-\delta}} \int_{t_n}^{t_{n+1}^-}\E 1_{\Omega_1} |Z_t|1_{|Z_t|\le 2\tau^{1/2-\delta}}\\
\le u(t_{n})-\frac{\beta^{-1}c_f}{2\tau^{1/2-\delta}}(1-\eta_2)\int_{t_n}^t u(s)\,ds,
\end{multline*}
where have used the fact $|Z_t|1_{t<\theta}=|Z_t|$ here.

To apply the Gr\"onwall's lemma, the problem is that
$\E 1_{\Omega_1}f(|Z_{n+1}|)\neq u(t_{n+1}^-)$. To resolve this, define $b(t):=\max\{\E 1_{\Omega_1}f(|Z_{n+1}|), u(t)\}$.
Since $\E 1_{\Omega_1}f(|Z_{n+1}|) \le u(t_n)$ by the estimate above, one concludes that $b$ is continuous and $b(t_n)=u(t_n)$.
It is clear that $u(t)$ is monotonically decreasing and $\E 1_{\Omega_1}f(|Z_{n+1}|) \le Cu(t_{n+1}^-)$ since the advection step amplify $|Z|$
by a small factor. Hence, one has $u(s)\ge C^{-1} b(s)$ for some constant. Then,
\[
b(t)\le b(t_n)-c\int_{t_n}^t b(s)\,ds.
\]
By applying Gr\"onwall's lemma to the function $b(t)$, we conclude that 
\[
\E 1_{\Omega_1} f(|Z_{n+1}|)\le e^{-c\tau}\E 1_{\Omega_1} f(|Z_n|).
\]
{\bf Case 1.2} If the advection step is first, after the advection, $|Z_n^+|\le e^{CL\tau}|Z_n|$.
Moreover, $f'(|Z|)\ge 3/4$. Hence, $u(t_n^+)\le e^{C_1L\tau}u(t_n)$.
The diffusion step is estimated similarly as above, and one has
\[
u(t)\le u(t_n^+)-c\int_{t_n}^t u(s)\,ds
\]
where $c=\frac{\beta^{-1}c_f}{2\tau^{1/2-\delta}}(1-\eta_2)$ which is greater than $2C_1L$ if $\tau$ is small. 
Combining the two cases here, one gets
\[
\E 1_{\Omega_1} f(|Z_{n+1}|)\le e^{-c'\tau}\E 1_{\Omega_1}f(|Z_n|).
\]

\noindent{\bf Case 2}  On this event, the good thing is that $\theta\le t$ will almost not happen. However, $e^{-c_f |Z_t|}$ is no longer close to $1$. Since $c_f$ is large, we cannot 
bound this by $e^{-c_f 2R}$ from below here. To address this, we define
$\mu_m:=m\tau^{1/2-\delta}$
and decompose $\Omega_2$ into the union of the following sets
\[
\Omega_{2,m}:=\{ \mu_m \le |Z_n|< \mu_{m+1}\}, \quad m=1, 2, \cdots, \lceil 2R/\tau^{1/2-\delta} \rceil-1.
\]

During the diffusion step, one has 
\[
\frac{d}{dt}\E 1_{\Omega_{2,m}}f(|Z(t)|)=-4\beta^{-1}c_f \E 1_{\Omega_{2,m}}e^{-c_f |Z_t|}1_{|Z_t|\le R_1}1_{t\le \theta}.
\]
Using the fact that $e^{-CL\tau}\mu_m \le |Z_n| \le \mu_{m+1}e^{CL\tau}
\le 2Re^{CL\tau}$  (if there is no advection step before the diffusion, the constants here are still correct), one has 
\begin{multline*}
 \E [1_{\Omega_{2,m}}e^{-c_f |Z_t|}1_{|Z_t|\le R_1}1_{t\le \theta}]
 \ge e^{-c_f \mu_{m+1}e^{CL\tau}}
 \E[ 1_{\Omega_{2,m}}e^{-c_f |Z_t-Z_n^+|} 1_{|Z_t|\le R_1}1_{t\le \theta} ]\\
 \ge \frac{1}{2}\frac{e^{-c_f \mu_m}}{f(R_1)}\E [1_{\Omega_{2,m}}1_{|Z_t-Z_n^+|< a} f(|Z_t|)].
\end{multline*}
Clearly, if $\tau$ is small enough, applying Lemma \ref{lmm:largeestimate} for $a=3\tau^{1/2-\delta}/8$, one has
\begin{gather*}
\E 1_{\Omega_{2,m}}e^{-c_f |Z_t|}1_{|Z_t|\le R_1}1_{t\le \theta} 
\ge \frac{1}{4 f(R_1)} e^{-c_f \mu_m}\E [1_{\Omega_{2,m}} f(|Z_t|)].
\end{gather*}

If the diffusion step is first, we define $W_s$ similarly as in Case 1. Then one has
\[
\E 1_{\Omega_{2,m}}f(|Z_{n+1}|)-\E 1_{\Omega_{2,m}}f(|Z_{n+1}^-|)
\le L\int_0^{\tau}\E e^{-c_f|W_s|\wedge R_1}1_{t_{n+1}<\theta}|W_s|1_{\Omega_{2,m}}
\]
It is clear that $\partial_s|W_s-Z_{n+1}^-|
\le L|W_s|$ so that $|W_s-Z_{n+1}^-|\le |Z_{n+1}^-|(e^{L\tau}-1)$ and $|W_s|\le e^{L\tau}|Z_{n+1}^-|$. Note that $|Z_{n+1}^-|1_{t_{n+1}<\theta}=|Z_{n+1}^-|$. Then, one has for $\tau\le \tau_0$ (where $\tau_0$ depends on $c_f, R_1,L$ etc)
\begin{gather*}
\E e^{-c_f|W_s|\wedge R_1}1_{t_{n+1}<\theta}|W_s|1_{\Omega_{2,m}}\le 2\E e^{-c_f|Z_{n+1}^-|\wedge R_1}|Z_{n+1}^-|
1_{\Omega_{2,m}},
\end{gather*}
By the second claim in Lemma \ref{lmm:rareeventestimate}, 
\[
\E e^{-c_f|Z_{n+1}^-|\wedge R_1}|Z_{n+1}^-|
1_{\Omega_{2,m}}
\le (1+e^{-c_f R_1}\eta_2(\mu_{m+1}, R_1, \tau))\E e^{-c_f|Z_{n+1}^-|}|Z_{n+1}^-|1_{|Z_{n+1}^-|\le R_1}
1_{\Omega_{2,m}}.
\]
Since $r/f(r)$ is an increasing function on $(0,\infty)$, 
\[
\E e^{-c_f|Z_{n+1}^-|}|Z_{n+1}^-|1_{|Z_{n+1}^-|\le R_1}
1_{\Omega_{2,m}}\le \frac{R_1}{f(R_1)}\E e^{-c_f|Z_{n+1}^-|}f(|Z_{n+1}^-|)1_{|Z_{n+1}^-|\le R_1}
1_{\Omega_{2,m}}.
\]
Applying Lemma \ref{lmm:largeestimate} again,
\begin{gather*}
\begin{split}
&\E e^{-c_f|Z_{n+1}^-|}f(|Z_{n+1}^-|)1_{|Z_{n+1}^-|\le R_1}
1_{\Omega_{2,m}}\\
&\le \eta_3\E f(|Z_{n+1}^-|)1_{\Omega_{2,m}}
+\E[e^{-c_f|Z_{n+1}^-|}f(|Z_{n+1}^-|)1_{|Z_{n+1}^--Z_n|<a}1_{\Omega_{2,m}}]\\
&\le \eta_3\E f(|Z_{n+1}^-|)1_{\Omega_{2,m}}+2e^{-c_f \mu_m}\E[f(|Z_{n+1}^-|)1_{|Z_{n+1}^--Z_n|<a}1_{\Omega_{2,m}}]\\
& \le (\eta_3+2e^{-c_f \mu_m})\E f(|Z_{n+1}^-|)1_{\Omega_{2,m}}.
\end{split}
\end{gather*}
Clearly, when $\tau$ is small enough, we have $\eta_3\le e^{-c_f \mu_m}$ as $\eta_3$ is exponentially small.

Hence one has
\begin{multline*}
\E 1_{\Omega_{2,m}}f(|Z_{n+1}|)\le \left[1+A_0 L\frac{R_1}{f(R_1)}e^{-c_f \mu_m}\tau \right]\E [f(|Z_{n+1}^-|) 1_{\Omega_{2,m}}]\\
\le \left[1+A_0 L\frac{R_1}{f(R_1)}e^{-c_f \mu_m}\tau \right]\exp\left(-\beta^{-1}\frac{c_f}{f(R_1)}e^{-c_f \mu_m}\tau\right)\E 1_{\Omega_{2,m}}f(|Z_n|). 
\end{multline*}
Note that the constant $A_0$ here is independent of $c_f, R_1,L,\mu_m$.

If the advection step is before the diffusion step, obtaining such an estimate is similar and is slightly easier.

Here the key point is that the multiplicative factor 
is strictly less than $e^{-c\tau}$ for some $c>0$ independent of $\tau$, if $c_f> A_0\beta LR_1$. Hence, choosing such $c_f$, one gets the contraction to obtain
\[
\E 1_{\Omega_{2,m}} f(|Z_{n+1}|)\le e^{-c\tau}\E 1_{\Omega_{2,m}} f(|Z_n|).
\]

\noindent {\bf Case 3}

We consider the case for $\Omega_3$, which is much easier as the advection step will yield the contraction. If the advection is first , the contraction is clear in the advection step since
\[
Z_s\cdot (b(X_s)-b(Y_s))\le -\lambda |Z_s|^2,
\] 
while $\E 1_{\Omega_3}f(|Z_t|)$
is nonincreasing in the diffusion step, one easily has
\[
\E 1_{\Omega_3}f(|Z_{n+1}|)\le \E 1_{\Omega_3}f(|Z_{n+1}^-|)\le e^{-c\tau}\E f(|Z_n|).
\]

If the diffusion step is first, then $\E 1_{\Omega_3}f(|Z_t|)$ is nonincreasing in the diffusion step. In the advection step
\[
\E 1_{\Omega_3}f(|Z_{n+1}|)
=\E 1_{\Omega_3}f(|Z_{n+1}|) 1_{|Z_{n+1}^-|< \frac{3}{2}R}+\E 1_{\Omega_3}f(|Z_{n+1}|) 1_{|Z_{n+1}^-|\ge \frac{3}{2}R}.
\]
By the contraction due to the convexity, one has
\[
\E 1_{\Omega_3}f(|Z_{n+1}|) 1_{|Z_{n+1}^-|\ge \frac{3}{2}R}
\le e^{-c_1\tau} \E 1_{\Omega_3}f(|Z_{n+1}^-|) 1_{|Z_{n+1}^-|\ge \frac{3}{2}R}
\]
while
\[
\E 1_{\Omega_3}f(|Z_{n+1}|) 1_{|Z_{n+1}^-|< \frac{3}{2}R}\le \exp(C\tau)\E 1_{\Omega_3}f(|Z_{n+1}^-|) 1_{|Z_{n+1}^-|< \frac{3}{2}R}.
\]
Since $|Z_n|>2R$,  we apply Lemma \ref{lmm:largeestimate} again with $a=R/2$ and obtain
\[
\E 1_{\Omega_3}f(|Z_{n+1}^-|) 1_{|Z_{n+1}^-|< \frac{3}{2}R}
\le \eta_3 \E 1_{\Omega_3}f(|Z_{n+1}^-|).
\]
Hence
\[
\begin{split}
\E 1_{\Omega_3}f(|Z_{n+1}|) &\le (e^{-c_1\tau}+(e^{C\tau}-e^{-c_1\tau})\eta_3)\E 1_{\Omega_3}f(|Z_{n+1}^-|)\\
&\le (e^{-c_1\tau}+(e^{C\tau}-e^{-c_1\tau})\eta_3)\E 1_{\Omega_3}f(|Z_{n}|).
\end{split}
\]
When $\tau$ is small, $\eta_3$ is exponentially small so that
\[
e^{-c_1\tau}+(e^{C\tau}-e^{-c_1\tau})\eta_3\le e^{-c\tau}
\]
for some $c$ independent of $\tau$, and thus
\[
\E 1_{\Omega_3} f(|Z_{n+1}|)\le e^{-c\tau}\E 1_{\Omega_3} f(|Z_n|).
\]

Summing over all disjoint events $\Omega_1,\Omega_2,\Omega_3$, we obtain
\[
\E f(|Z_{n+1}|)=\sum_{i=1}^3 \E 1_{\Omega_i} f(|Z_{n+1}|)
\le e^{-c\tau}\sum_{i=1}^3 \E 1_{\Omega_i} f(|Z_n|) 
= e^{-c\tau}\E f(|Z_n|).
\]
This proves the main inequality in the theorem.

We take the coupling measure $\Gamma$ for $(X, Y)$
such that
\[
\E_\Gamma|X_0-Y_0|=W_1(\mu_0, \nu_0),
\]
and the coupling later is propagated by the reflection coupling 
of the Brownian motions along the random splitting dynamics.
Let $\mu_n^{\xi}$ and $\nu_n^{\xi}$ be the laws of $X$
and $Y$ respectively for the random splitting LMC with given sequence of $\xi$.
 By definition of the Wasserstein distance,
\[
W_1(\mu_n^{\xi},\nu_n^{\xi})\le \E_{\Gamma}|X_n-Y_n| = \E |Z_n|.
\]
Since $f(r)=e^{-c_f(r\wedge R_1)}r$ is equivalent to $r$ (there exist $c_1,c_2>0$ such that $c_1 r \le f(r)\le c_2 r$ for all $r$), we have
\[
\E |Z_n|\le C \E f(|Z_n|).
\]
Applying the contraction inequality for $\E f(|Z_n|)$ yields
\[
W_1(\mu_n^{\xi},\nu_n^{\xi})\le C e^{-\lambda n\tau} W_1(\mu_0,\nu_0).
\]
Since $\mu_n=\E_\xi \mu_n^\xi$ and $\nu_n=\E_\xi \nu_n^\xi$, 
the joint convexity of $W_1$ (which follows immediately from its 
Kantorovich--Rubinstein dual formulation, see Villani~\cite[Particular Case~5.16]{villani2008optimal}) yields
\[
W_1(\mu_n,\nu_n)\;\le\; \E_\xi\,W_1(\mu_n^\xi,\nu_n^\xi).
\]
\end{proof}

\section{Finite time error estimate}\label{sec:error-estimate}

This section presents a systematic error analysis for the random splitting LMC. Through the pointwise bounds of the gradient and Hessian for the logarithmic density, we establish precise control of the temporal evolution for the relative entropy of the density with respect to the exact solution of the Fokker-Planck equation. Our analysis yields a fourth-order relative entropy bounds with explicit $T$-dependence, which is then extended to uniform-in-time control via the ergodicity properties.

\subsection{Finite time error bound in relative entropy}
We will get back to the time interpolation in \eqref{eq:timeinterpolation}.
 Let us compute
\[
E(t) := \mathcal{H}(\bar{\rho}(t) \parallel \rho(t)),
\]
where $\rho(t)$ is the solution of the Fokker-Planck equation for the original SDE. To compute the derivative, one may need the evolution of $\rho^{\xi_n}(t)$ defined in \eqref{eq:timeinterpolation}.
We can compute that
\begin{multline}\label{eq:FP-numerical}
    \partial_t\rho^{\xi_n}=\cL_{\xi(1)} \rho^{\xi}+e^{(t-n\tau)\cL_{\xi(1)}}\cL_{\xi(2)} e^{(t-n\tau)\cL_{\xi(2)}}\bar{\rho}_n\\
=\cL \rho^{\xi_n}+[e^{(t-n\tau)\cL_{\xi(1)}}, \cL_{\xi(2)}]e^{(t-n\tau)\cL_{\xi(2)}}\bar{\rho}_n.
\end{multline}
Here, the commutator $[e^{(t-n\tau)\mathcal{L}_{\xi(1)}}, \mathcal{L}_{\xi(2)}]$ is defined as:
\[
[e^{(t-n\tau)\mathcal{L}_{\xi(1)}}, \mathcal{L}_{\xi(2)}] := e^{(t-n\tau)\mathcal{L}_{\xi(1)}} \mathcal{L}_{\xi(2)} - \mathcal{L}_{\xi(2)} e^{(t-n\tau)\mathcal{L}_{\xi(1)}}.
\]
This operator quantifies the non-commutativity between the evolution semigroup $e^{(t-n\tau)\mathcal{L}_{\xi(1)}}$ and the generator $\mathcal{L}_{\xi(2)}$ in our splitting scheme.
The remainder term is of order $O(\tau)$. If we consider the expectation, we have
\begin{equation}\label{eq:FP-numerical1}
    \partial_t\bar{\rho}(t)
=\cL \bar{\rho}+\E_{\xi_n}[e^{(t-n\tau)\cL_{\xi(1)}}, \cL_{\xi(2)}]e^{(t-n\tau)\cL_{2}}\bar{\rho}_n.
\end{equation}
We expect the remainder to reduce to $O(\tau^2)$ after expectation as the leading term in bias cancels. Of course, we need to justify this rigorously later to close the estimates.

Compute the formula for 
\[
\frac{d}{dt}\mathcal{H}(\bar{\rho}(t) \parallel \rho(t))
=\int (1+\log\bar{\rho}-\log\rho(t))\partial_t\bar{\rho}-\frac{\bar{\rho}}{\rho(t)}\partial_t\rho(t).
\]
Using \eqref{eq:SDE}, \eqref{eq:FP-numerical1} (recalling $b=-\nabla U$), and integration by parts, it is then reduced to 
\[
\beta^{-1} \int (-\nabla\log\frac{\bar{\rho}}{\rho}\cdot\nabla\bar{\rho}
+\bar{\rho}\nabla\log\frac{\bar{\rho}}{\rho}\cdot\nabla\log\rho)
+\int (\log\frac{\bar{\rho}}{\rho})\mathbb{E}_{\xi}[e^{(t-n\tau)\cL_{\xi(1)}}, \cL_{\xi(2)}]e^{(t-n\tau)\cL_{2}}\bar{\rho}_n.
\]
Simplifying this leads to
\begin{equation}\label{eq:relative}
    \frac{d}{dt}\mathcal{H}(\bar{\rho}(t) \parallel \rho(t))
    = -\beta^{-1}\int \bar{\rho}|\nabla\log\frac{\bar{\rho}}{\rho}|^2\,dx
+r_t,
\end{equation}
where
\[
 r_t=\int (\log\frac{\bar{\rho}}{\rho})\mathbb{E}_{\xi}[e^{(t-n\tau)\cL_{\xi(1)}}, \cL_{\xi(2)}]e^{(t-n\tau)\cL_{2}}\bar{\rho}_n.
\]

Next, our main goal is to estimate the remainder term. We first derive the formulas of the remainder term.
To proceed, we denote
\[
s:=t-n\tau.
\]
Then,
\begin{multline}
2 \mathbb{E}_{\xi}[e^{s\cL_{\xi(1)}}, \cL_{\xi(2)}] e^{s\cL_{\xi(2)}}\bar{\rho}_n
= ( e^{s\cL_{1}} \cL_{2} - \cL_{2} e^{s\cL_{1}}) h_2(s, x) + ( e^{s\cL_{2}} \cL_{1} - \cL_{1} e^{s\cL_{2}}) h_1(s, x)\\
= : I_1 + I_2,
\end{multline}
where we have introduced
\begin{gather}
\begin{split}
& h_1(s, x)=e^{s\cL_{1}}\bar{\rho}_n(x)=\bar{\rho}_n(\phi_{-s}(x))J(-s, x),\\
& h_2(s, x)=e^{s\cL_{2}}\bar{\rho}_n(x)
=\int_{\R^d} \frac{1}{(4\pi\beta^{-1}s)^{d/2}}\exp(-\frac{\beta|x-y|^2}{4s})\bar{\rho}_n(y)\,dy.
\end{split}
\end{gather}

It follows that
\begin{multline}
I_1 =( e^{s\cL_{1}} \cL_{2} - \cL_{2} e^{s\cL_{1}}) h_2(s, x) \\
=\beta^{-1} (\Delta h_2)(s, \phi_{-s}(x)) J(-s,x)
-\beta^{-1}  \Delta_x \big( h_2(s, \phi_{-s}(x))J(-s,x) \big),
\end{multline}
and
\begin{multline}
I_2  =( e^{s\cL_{2}} \cL_{1} - \cL_{\xi(1)} e^{s\cL_{2}}) h_1(s, x) =
- \int \frac{1}{(4 \pi \beta^{-1} s)^{\frac{d}{2}}}
e^{-\frac{\beta |x-y|^2}{4 s} } \nabla_y\cdot ( b (y) h_1(s, y)) dy \\
		+\nabla_x \cdot\Big(  b(x) \int  \frac{1}{(4 \pi \beta^{-1} s)^{\frac{d}{2}}}
		e^{-\frac{\beta|x-y|^2}{4  s} } h_1(s, y) dy \Big).
\end{multline}

After some computation whose details are provided in Appendix \ref{app:pointremainder},  we obtain (recall $s=t-n\tau$)
\begin{gather}\label{eq:remainderformula}
\begin{split}
r_{t}&=\frac{1}{2}\int_{\R^d} (\log\frac{\bar{\rho}}{\rho}) ( I_1 + I_2)\,dx
=\frac{\beta^{-1}s}{2}\int \nabla \log\frac{\bar{\rho}}{\rho}\cdot (\sum_{i=1}^3E_i(s, x))\,dx\\
&\le \frac{\beta^{-1}}{8}\int |\nabla \log\frac{\bar{\rho}}{\rho}|^2\bar{\rho}\,dx
+\frac{3(\beta^{-1}s)^2}{2}\int \frac{\sum_{i=1}^3|E_i(s,x)|^2}{\bar{\rho}(t,x)}\,dx,
\end{split}
\end{gather}
where the $E_i$ are three remainder vectors fields. The remainder $E_1$  is given by
\begin{multline}
E_1(s,x)=\frac{\nabla_x\phi_{-s}(x)-(\nabla_x\phi_{-s}(x))^{-T}}{s}\cdot\nabla h_2(s, \phi_{-s} (x) ) J(-s,x )\\
+\int \frac{e^{-\frac{\beta |x-y|^2}{4 s} }}{(4 \pi \beta^{-1} s)^{\frac{d}{2}}}  (\overline{\nabla b}(x, y)+\overline{\nabla b}(x, y)^T) \cdot \nabla  h_1(s, y) dy,	
\end{multline}
where we have introduced
\begin{gather}
\overline{\nabla b}(x, y):=\int_0^1\nabla b(\lambda y+(1-\lambda)x)d\lambda.
\end{gather}
Moreover, 
$E_2$ is given by
\begin{multline}
E_2(s,x)=s^{-1}\nabla J(-s, x) h_2(s, \phi_{-s}(x))+
\int \frac{e^{-\frac{\beta |x-y|^2}{4 s} }}{(4 \pi \beta^{-1} s)^{\frac{d}{2}}}  \overline{\nabla (\nabla\cdot b)}(x,y)    h_1(s, y) dy,
\end{multline}
and $E_3$ is given by
\begin{multline}
E_3(s,x)=\int \frac{e^{-\frac{\beta |x-y|^2}{4 s} }}{(4 \pi \beta^{-1} s)^{\frac{d}{2}}}  \int_0^1 \lambda [\Delta b(z(\lambda,y,x))-\Delta b(z(\lambda, x, y))]d\lambda    h_1(s, y) dy,
\end{multline}
where $z(\lambda, x, y):=\lambda x+(1-\lambda)y$.

By the definition of $h_i$, one has
\begin{gather}
\nabla h_2(s, x)=\int \frac{1}{(4\pi \beta^{-1}s)^{d/2}}\exp(-\frac{\beta|x-y|^2}{4s})\nabla \bar{\rho}_n(y)\,dy,
\end{gather}
and
\begin{gather}
\nabla h_1(s,x)=\nabla \phi_{-s}(x)\cdot \nabla\bar{\rho}_n(\phi_{-s}(x)) J(-s, x)
+\bar{\rho}_n(\phi_{-s}(x))\nabla J(-s, x).
\end{gather}

We expect that all the three terms above are of order $O(s)$ so that $r_t$ is of order $O(s^4)$.
In fact, we are able to derive the following pointwise bounds using the pointwise estimates of the log density and its various derivatives in section \ref{subsec:densitybound} and section \ref{subsec:derivativebound}.
The derivation is kind of tedious and we attach it in Appendix \ref{app:pointremainder}.
\begin{proposition}\label{pro:deri_estimate}
Suppose Assumption \ref{ass:drift} on $b$ and Assumption \ref{ass:initial-density} on the initial density hold, then 
\begin{gather}
\sum_{i=1}^3|E_i(s, x)|\le Cs(1+|x|^{p_1})\bar{\rho}(t, x),
\end{gather}
for some $p_1>0$, where $C$ depends on $T$, and possibly the dimension $d$.
\end{proposition}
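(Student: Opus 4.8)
The goal is to show each remainder vector field $E_i(s,x)$ is $O(s)$ times a polynomial weight times $\bar\rho(t,x)$. The strategy is to go term by term, using two structural facts: (a) the Taylor-expansion prefactors in $E_1,E_2,E_3$ each carry an intrinsic factor of $s$ once expanded, and (b) the heat-kernel convolutions and Jacobian factors relating $\bar\rho_n$, $h_1$, $h_2$ and $\bar\rho(t,\cdot)$ are comparable up to polynomially-growing constants, by the density and derivative estimates already established. Let me sketch each.

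First, $E_1$. The factor $\dfrac{\nabla_x\phi_{-s}(x)-(\nabla_x\phi_{-s}(x))^{-T}}{s}$ is the crux: since $\nabla_x\phi_{-s}(x)=I - s\,\nabla b(x)+O(s^2)$ and $(\nabla_x\phi_{-s}(x))^{-T}=I+s\,(\nabla b(x))^{T}+O(s^2)$ (using that $\|\nabla b\|$, $\|\nabla^2 b\|$, $\|\nabla^3 b\|$ are bounded by Assumption \ref{ass:drift}), the difference is $-s(\nabla b+(\nabla b)^{T})+O(s^2)$, so dividing by $s$ leaves a bounded matrix; then $|E_1^{(1)}|\le C\,|\nabla h_2(s,\phi_{-s}(x))|\,J(-s,x)$. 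For the second piece of $E_1$, the symmetrized average $\overline{\nabla b}(x,y)+\overline{\nabla b}(x,y)^T$ is just bounded, so that piece is $\le C\int (4\pi\beta^{-1}s)^{-d/2}e^{-\beta|x-y|^2/4s}|\nabla h_1(s,y)|\,dy$. This is \emph{not} obviously $O(s)$ — the gain of the extra power of $s$ here must come from the antisymmetry/oddness of the Gaussian against $(y-x)$: one should Taylor-expand $\nabla h_1(s,y)$ (equivalently rewrite using $\overline{\nabla b}(x,y)=\nabla b(x)+O(|y-x|)$) so that the leading term pairs $\nabla b(x)\cdot\nabla h_1(s,x)$ against an even function and the genuinely $O(s)$-small remainder is what survives; the same bookkeeping I expect already appears in Appendix \ref{app:pointremainder} and I would simply invoke it.

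Second, $E_2$ and $E_3$. For $E_2$, $s^{-1}\nabla J(-s,x)$ is bounded because $J(-s,x)=1 - s(\nabla\cdot b)(x)+O(s^2)$ by \eqref{eq:Jacobieq}, so $\nabla J(-s,x)=-s\,\nabla(\nabla\cdot b)(x)+O(s^2)$ and $s^{-1}\nabla J(-s,x)$ is controlled by $\|\nabla^2 b\|_\infty$; thus $|E_2|\le C\,h_2(s,\phi_{-s}(x))+C\int (4\pi\beta^{-1}s)^{-d/2}e^{-\beta|x-y|^2/4s}|\overline{\nabla(\nabla\cdot b)}(x,y)|\,h_1(s,y)\,dy$, both bounded by $C\,\bar\rho(t,x)(1+|x|^{p})$ after the comparison step below — and again, the extra $s$ in the second term of $E_2$ must be extracted by the Gaussian-parity argument pairing $\nabla(\nabla\cdot b)(x)$ against the odd part. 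For $E_3$, the integrand already contains $\Delta b(z(\lambda,y,x))-\Delta b(z(\lambda,x,y))$, which is a difference of values of $\Delta b$ at points symmetric about the midpoint $\tfrac{x+y}{2}$ with separation $(2\lambda-1)(y-x)$; by the mean value theorem and boundedness of $\nabla^3 b$ this is $\le C|y-x|$, and convolving $|y-x|$ against the heat kernel produces exactly the factor $O(\sqrt{s})$, with a second factor $O(\sqrt{s})$ coming from the odd-in-$(y-x)$ structure once one more term of the Taylor expansion is peeled — together giving $O(s)$; I would cite Appendix \ref{app:pointremainder} for the precise pairing.

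Third, the comparison step — reducing everything to $\bar\rho(t,x)$ times a polynomial. We have $\bar\rho(t,x)=\mathbb{E}_\xi\rho^\xi(t,x)$ and, for each branch, $\rho^\xi(t,\cdot)$ is obtained from $\bar\rho_n$ by the two half-step semigroups; likewise $h_1(s,x)=e^{s\cL_1}\bar\rho_n$ and $h_2(s,x)=e^{s\cL_2}\bar\rho_n$. The needed bounds are: (i) pointwise upper bounds on $h_1$, $h_2$, $|\nabla h_1|$, $|\nabla h_2|$ in terms of $\bar\rho(t,x)$ up to polynomial weights — for $h_1$ this follows from the explicit Jacobian formula plus the change of variables $x\mapsto\phi_{-s}(x)$ (with $|\nabla\phi_{-s}|$ and $J$ both bounded above and below on $[0,\tau]$), and for $h_2$ from the fact that a short-time ($s\le\tau$) heat-kernel average of a density with Gaussian-type pointwise bounds (Proposition \ref{pro:density-SDE}, and its discrete analogue which one derives the same way) is comparable to the density itself times a polynomial weight; (ii) gradient bounds $|\nabla\bar\rho_n|\le\bar\rho_n\,|\nabla\log\bar\rho_n|\le C(1+|x|^{p_1})\bar\rho_n$ from Theorem \ref{thm:gradient_estimate} applied to the numerical density — here one needs the analogue of Theorem \ref{thm:gradient_estimate} for $\bar\rho_n$, or one argues that $\bar\rho(t,\cdot)$ itself (being a mixture of heat-flowed densities) inherits the log-gradient bound, which is the cleanest route. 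Assembling (i)–(ii) with the per-term bounds above yields $\sum_i|E_i(s,x)|\le Cs(1+|x|^{p_1})\bar\rho(t,x)$.

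\textbf{Main obstacle.} The genuinely delicate point is extracting the \emph{full} factor $s$ (not just $\sqrt s$) from the three integral terms: each naive bound gives only $O(1)$ or $O(\sqrt s)$, and the missing power comes from cancellation — the leading Taylor term of $\overline{\nabla b}$, $\overline{\nabla(\nabla\cdot b)}$, or $\Delta b$ pairs against an odd function of $y-x$ under the Gaussian and integrates to zero, so one must expand one order further and control the next remainder, all while keeping the polynomial-in-$|x|$ weights uniform for $s\in[0,\tau]$ and $t\le T$. A secondary nuisance is transferring the PDE-based log-density estimates (stated for the continuous Fokker--Planck solution $\rho$) to the numerical density $\bar\rho_n$ / $\bar\rho(t,\cdot)$; the safest fix is to prove those estimates directly for $\bar\rho(t,\cdot)$ — which is legitimate since $\bar\rho(t,\cdot)$ solves the perturbed Fokker--Planck equation \eqref{eq:FP-numerical1} and the Bernstein argument of Theorem \ref{thm:gradient_estimate} is robust to the $O(\tau)$ lower-order perturbation — or to restrict to one half-step and use the explicit semigroup formulas of Lemma \ref{lem:evolution-opera}.
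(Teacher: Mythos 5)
There is a genuine gap in your treatment of $E_1$ and $E_2$: you try to extract the factor $s$ from each constituent term of $E_1$ (and of $E_2$) separately, but neither term is individually $O(s)$, and the required smallness comes from a cancellation \emph{between} the two terms which your decomposition discards. Concretely, you bound the first piece of $E_1$ by $C\,|\nabla h_2(s,\phi_{-s}(x))|\,J(-s,x)$ after observing that $s^{-1}\bigl(\nabla\phi_{-s}-(\nabla\phi_{-s})^{-T}\bigr)$ is bounded; but that prefactor converges to $-(\nabla b+\nabla b^{T})\neq 0$, so this piece is $O(1)\cdot|\nabla\bar{\rho}_n|$ with no factor of $s$ left to harvest. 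Your Gaussian-parity argument for the second piece cannot supply the missing $s$ either: after Taylor expansion the zeroth-order term $(\nabla b(x)+\nabla b(x)^{T})\cdot\nabla h_1(s,x)$ pairs against the Gaussian itself, which integrates to one, so it \emph{survives} at order $O(1)$ --- parity only annihilates the odd first-order corrections. The rescue is that these two surviving $O(1)$ contributions are equal and opposite (both tend to $\mp(\nabla b+\nabla b^{T})\cdot\nabla\bar{\rho}_n(x)$ as $s\to0$), and the paper's proof is organized precisely around this: it changes variables $y\mapsto\phi_s(y)$ so that both pieces sit under comparable kernels, and then controls the residual by (i) the difference of the two Gaussian kernels, using $\bigl||x-\phi_s(y)|^2-|\phi_{-s}(x)-y|^2\bigr|\le Cs|x-\phi_s(y)|^2$ together with $\int |x-y|^2 e^{-\beta|x-y|^2/4s}(4\pi\beta^{-1}s)^{-d/2}dy\sim s$, and (ii) the difference $\overline{\nabla b}(x,\phi_s(y))-\nabla b(x)=O(|x-\phi_s(y)|)$, where $(y-x)e^{-\beta|x-y|^2/4s}=\tfrac{2s}{\beta}\nabla_y e^{-\beta|x-y|^2/4s}$ is integrated by parts onto $\nabla h_1$ --- which is exactly why the Hessian bound on $\log\bar{\rho}_n$ is needed. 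The same pairing (between $s^{-1}\nabla J(-s,x)\,h_2$ and the $\overline{\nabla(\nabla\cdot b)}$ convolution, both tending to $\mp\nabla(\nabla\cdot b)(x)\,\bar{\rho}_n(x)$) is indispensable for $E_2$; as written, your per-term bounds can only yield $|E_1|,|E_2|\le C(1+|x|^{p})\bar{\rho}$, which is insufficient for the $O(\tau^4)$ entropy estimate.

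Your $E_3$ argument is sound, since there the antisymmetrized difference $\Delta b(z(\lambda,y,x))-\Delta b(z(\lambda,x,y))$ genuinely has no zeroth-order term and the integration by parts (equivalently, your parity argument pushed one order further) delivers the full factor $s$. Your comparison step is also in the right spirit, but note that the paper establishes the polynomial bounds on $\nabla\log$ and $\nabla^2\log$ of the numerical densities directly (Propositions \ref{pro:densityestimate}, \ref{pro:pointwiseest}, \ref{pro:second_order_main} and Theorem \ref{thm:deriva_density}, via the explicit semigroup formulas and Jensen's inequality) rather than by perturbing the continuous Bernstein argument, and the far-field control requires the specific truncation radius $\bar{R}\sim\sqrt{s\max\{1,|\log\bar{\rho}|\}}$ to convert Gaussian tails into powers of $\bar{\rho}(t,x)$ --- a step your sketch does not address.
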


\begin{remark}
What is crucial to us is that the bound here depends on the coefficients $ A, c , \delta$ in the assumption of $\rho_0$, not on the particular form of $\rho_0$. This is important as we will apply this result to different "initial densities" and the contraction property to get the global control.
\end{remark}
Combining equation \eqref{eq:relative}, \eqref{eq:remainderformula}, and Proposition \ref{pro:deri_estimate} and the moment bounds in Lemma \ref{lem:moment_hybrid_refined}, we obtain the following result.
\begin{theorem}
	\label{thm:uniform_error}
Suppose  Assumption \ref{ass:drift} holds. Let $\rho(t)$ be the solution to the Fokker-Planck equation with initial value $\rho_0$ satisfying Assumption   \ref{ass:initial-density} and $\bar{\rho}(t)$ be the law to the random splitting LMC with the same initial value $\rho_0$. Then, for any $T>0$, there is a constant $C(T)>0$ such that 
		\begin{equation}
			\mathcal{H}(\bar{\rho}(t) \| \rho(t)) \leq C(T) \tau^4.
			\label{eq:local_error}
		\end{equation}
\end{theorem}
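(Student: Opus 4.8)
The plan is to apply a Gr\"onwall-type argument to the evolution equation \eqref{eq:relative} for $E(t)=\mathcal{H}(\bar\rho(t)\,\|\,\rho(t))$. The key identity is
\[
\frac{d}{dt}E(t)=-\beta^{-1}\int \bar\rho\,\Big|\nabla\log\frac{\bar\rho}{\rho}\Big|^2\,dx + r_t,
\]
where the negative Fisher-information term is what saves us: after the splitting in \eqref{eq:remainderformula} absorbs $\tfrac{\beta^{-1}}{8}\int|\nabla\log(\bar\rho/\rho)|^2\bar\rho$ into that good term, the surviving contribution of $r_t$ is controlled by $\tfrac{3(\beta^{-1}s)^2}{2}\int \frac{\sum_i|E_i(s,x)|^2}{\bar\rho(t,x)}\,dx$ with $s=t-n\tau\le\tau$. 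Using the pointwise bound $\sum_i|E_i(s,x)|\le Cs(1+|x|^{p_1})\bar\rho(t,x)$ from Proposition \ref{pro:deri_estimate}, this integrand is $\le C^2 s^2 (1+|x|^{p_1})^2\bar\rho(t,x)$, so
\[
|r_t|\le \frac{\beta^{-1}}{8}\int \bar\rho\,\Big|\nabla\log\frac{\bar\rho}{\rho}\Big|^2\,dx + C s^4\int (1+|x|^{2p_1})\bar\rho(t,x)\,dx.
\]

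Next I would invoke the uniform moment bound: since $\bar\rho(t)=\E_\xi\rho^{\xi}(t)$ is a convex combination of densities each obtained from $\bar\rho_n$ (itself a convex combination of the $\rho_n^\xi$) by at most one drift and one diffusion substep of length $\le\tau$, part (ii) of Lemma \ref{lem:moment_hybrid_refined} together with part (i)-type arguments for the short continuous substeps gives $\sup_{t\le T}\int(1+|x|^{2p_1})\bar\rho(t,x)\,dx\le C_T$, a constant depending on $T$ and the moments of $\rho_0$ but not on $\tau$. Hence on each interval $[n\tau,(n+1)\tau]$ we get
\[
\frac{d}{dt}E(t)\le -\frac{\beta^{-1}}{2}\int \bar\rho\,\Big|\nabla\log\frac{\bar\rho}{\rho}\Big|^2\,dx + C_T\,\tau^4 \le C_T\,\tau^4,
\]
simply discarding the nonpositive Fisher term. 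Since $E(0)=\mathcal{H}(\rho_0\|\rho_0)=0$ and $E$ is continuous across the grid points (both $\bar\rho$ and $\rho$ evolve continuously in $t$), integrating over $[0,t]$ yields $E(t)\le C_T\, t\,\tau^4\le C(T)\tau^4$, which is \eqref{eq:local_error}.

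The main obstacle is not the Gr\"onwall step itself but making the chain from \eqref{eq:FP-numerical1} to \eqref{eq:relative} and \eqref{eq:remainderformula} rigorous: one must justify the integration by parts (requiring enough decay and regularity of $\bar\rho$ and $\log(\bar\rho/\rho)$, which is exactly where Proposition \ref{pro:density-SDE} and Theorem \ref{thm:gradient_estimate} enter), verify the exact cancellation that reduces the naive $O(s)$ commutator remainder to the $O(s)$-per-component estimate in Proposition \ref{pro:deri_estimate} after taking $\E_\xi$, and confirm that the constants in Proposition \ref{pro:deri_estimate} depend only on the \emph{structural} constants $A_1,c_0,\delta,p$ of the initial-density assumption and not on the fine shape of $\bar\rho_n$ — this uniformity, flagged in the remark, is what later permits feeding $\bar\rho_n$ back in as an "initial datum" when combining with ergodicity. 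All of these are supplied by the cited appendices and earlier results, so the remaining work here is the bookkeeping above.
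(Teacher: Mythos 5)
Your proposal is correct and follows essentially the same route as the paper, which proves the theorem exactly by combining \eqref{eq:relative}, \eqref{eq:remainderformula}, Proposition \ref{pro:deri_estimate}, and the moment bounds of Lemma \ref{lem:moment_hybrid_refined}, then discarding the nonpositive Fisher-information term and integrating in time. (The constant $-\beta^{-1}+\beta^{-1}/8=-\tfrac{7}{8}\beta^{-1}$ rather than $-\tfrac{1}{2}\beta^{-1}$, but since that term is dropped anyway the inequality you wrote remains valid.)
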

By Assumption~\ref{ass:drift} and the exponential moment bounds in 
Assumption~\ref{ass:initial-density}, the reference law $\rho(t)$ satisfies, 
uniformly for $t\in[0,T]$, the Talagrand--$T_1$ transportation inequality
\[
W_1(\mu,\rho(t)) \;\le\; \sqrt{\,2C_T\,\mathcal{H}(\mu\|\rho(t))}, 
\qquad \text{for all probability measures $\mu$}.
\]

\begin{corollary}[Wasserstein--1 error bound]\label{cor:W1_from_H}
Under the assumptions of Theorem~\ref{thm:uniform_error}, 
for any $T>0$ there exists a constant $C(T)>0$ such that, for all $t\in[0,T]$,
\[
W_1\bigl(\bar\rho(t),\rho(t)\bigr)
\;\le\; \sqrt{\,2C_T\,\mathcal{H}(\bar\rho(t)\|\rho(t))\,}
\;\le\; C(T) \,\tau^{2}.
\]
\end{corollary}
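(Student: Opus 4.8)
The plan is to combine the finite-time relative-entropy bound of Theorem~\ref{thm:uniform_error} with a transportation inequality of type $(\mathrm{T}_1)$ for the reference law $\rho(t)$, the latter holding with a constant that is uniform over $t\in[0,T]$ (indeed over all $t\ge0$).

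\textbf{Step 1: a uniform $(\mathrm{T}_1)$ inequality for $\rho(t)$.} I would first establish $W_1(\mu,\rho(t))\le\sqrt{2C_T\,\mathcal{H}(\mu\|\rho(t))}$ for every probability measure $\mu$, with a constant $C_T$ uniform in $t\in[0,T]$. The input is the pointwise upper bound of Proposition~\ref{pro:density-SDE}, $\rho(x,t)\le C\bigl[\exp(-\delta\beta U(x))e^{-\lambda t}+\exp(-\beta U(x))\bigr]$, together with Assumption~\ref{ass:drift}: since $\nabla^2 U\succeq\kappa I$ outside the ball of radius $M$, a one-dimensional Taylor expansion along radial segments contained in $\{|y|\ge M\}$, using the linear growth of $\nabla U$ on the sphere $\{|y|=M\}$, gives $U(x)\ge\tfrac{\kappa}{4}|x|^2-C$ for all $x$. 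Hence $\rho(t,x)\le C'\exp(-c_1|x|^2)$ with $c_1>0$ independent of $t$, so $\int_{\R^d}e^{\alpha|x|^2}\rho(t,x)\,dx\le K$ for some $\alpha>0$ and $K<\infty$ uniformly in $t$. By the Djellout--Guillin--Wu / Bobkov--G\"otze characterisation, finiteness of such a Gaussian moment with a uniform bound yields $(\mathrm{T}_1)$ with a constant $C_T$ depending only on $\alpha$ and $K$; this is the uniform sub-Gaussian $\Rightarrow(\mathrm{T}_1)$ mechanism announced in Section~\ref{sec:setup}.

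\textbf{Step 2: apply the inequality.} Theorem~\ref{thm:uniform_error} gives $\mathcal{H}(\bar{\rho}(t)\|\rho(t))\le C(T)\tau^4<\infty$, so in particular $\bar{\rho}(t)\ll\rho(t)$ and the $(\mathrm{T}_1)$ inequality of Step~1 applies with $\mu=\bar{\rho}(t)$:
\[
W_1\bigl(\bar{\rho}(t),\rho(t)\bigr)\le\sqrt{2C_T\,\mathcal{H}\bigl(\bar{\rho}(t)\|\rho(t)\bigr)}\le\sqrt{2C_T\,C(T)}\;\tau^2 .
\]
Relabelling $\sqrt{2C_T\,C(T)}$ as $C(T)$ completes the proof.

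The only non-routine point is Step~1, namely securing a $(\mathrm{T}_1)$ constant that does not deteriorate as $t$ grows. This is precisely where the decay factor $e^{-\lambda t}$ in Proposition~\ref{pro:density-SDE} is used: it keeps the $\delta$-tilted term — whose tails would otherwise be the fattest, since $0<\delta<1$ — under control and thereby pins down a uniform Gaussian moment of $\rho(t)$. Given Step~1 and Theorem~\ref{thm:uniform_error}, the remainder is a one-line application of the transport inequality followed by a square root.
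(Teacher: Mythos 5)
Your proposal is correct and follows essentially the same route as the paper: the paper simply asserts (in the paragraph preceding the corollary) that $\rho(t)$ satisfies a uniform Talagrand $T_1$ inequality on $[0,T]$ and then composes it with Theorem~\ref{thm:uniform_error}, which is exactly your Step~2; your Step~1 merely fleshes out the sub-Gaussian estimate (via Proposition~\ref{pro:density-SDE} and the quadratic lower bound on $U$ from Assumption~\ref{ass:drift}) that the paper leaves implicit. One minor quibble: for uniformity over a finite horizon $[0,T]$ the factor $e^{-\lambda t}$ is not actually needed, since $e^{-\lambda t}\le 1$ and the $\delta$-tilted term $\exp(-\delta\beta U(x))$ is already sub-Gaussian on its own once $U(x)\ge \tfrac{\kappa}{4}|x|^2-C$; the decay would only matter for a $t$-uniform constant as $t\to\infty$.
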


In the next several subsections, we establish the bounds for the log-density and its derivatives for $\bar{\rho}(t)$, which serve as fundamental priori estimates for subsequent error analysis. Our approach is to get the estimates for $\rho_n^{\xi}$ defined in \eqref{eq:rhoxi} first in section \ref{subsec:densitybound}--section \ref{subsec:derivativebound}. Then, we extend this estimate to $\bar{\rho}_n$ and $\bar{\rho}(t)$ in section \ref{subsec:boundsforrhobar}.

\subsection{The bounds on the density}\label{subsec:densitybound}
This subsection establishes rigorous upper and lower bounds for the numerical solution density.

\begin{proposition}\label{pro:densityestimate}
Suppose Assumptions \ref{ass:drift} and \ref{ass:initial-density} hold. Then for any fixed sequence of random permutations $\b{\xi}$ and $\tau\le 1$, the density for the random splitting LMC $\rho_n^{\xi}$ defined in \eqref{eq:rhoxi} satisfies
\[
A_3\exp(-\alpha_1 |x|^p)\le \rho_n^{\xi} \le A_4\exp(-\alpha_2 |x|^2),
\quad \forall~ n\tau\le T,
\]
where the constants $\alpha_i$ and $A_3, A_4$ are independent of $\b{\xi}$ and $\tau$, but can depend on $T$ and the initial data through the constants in Assumption \ref{ass:initial-density}.
\end{proposition}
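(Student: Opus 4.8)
The plan is to track how a Gaussian-type upper bound and an $\exp(-\gamma|x|^p)$-type lower bound propagate through a single elementary substep of the scheme, and then iterate over $n\le\lceil T/\tau\rceil$ steps. The crucial structural point is that each application of $e^{\tau\mathcal{L}_1}$ or of $e^{\tau\mathcal{L}_2}$ degrades the relevant exponent and prefactor only by a multiplicative factor $1+O(\tau)$; composing $n$ such factors and using $(1+C\tau)^{n}\le e^{CT}$ when $n\tau\le T$ then gives constants that are uniform in $\tau\le\tau_0$ and in the permutation sequence $\b{\xi}$, precisely because none of the per-step estimates refers to the order of the two substeps. I first record that the two-sided hypothesis on $\rho_0$ forces $p\ge2$: Assumption~\ref{ass:drift} gives $U(x)\ge c|x|^2-C$, hence $A_0\exp(-c_0|x|^p)\le\rho_0(x)\le A_1\exp(-\delta\beta U(x))\le C\exp(-\alpha_0|x|^2)$ with $\alpha_0=\delta\beta c/2$, and this is consistent for $|x|\to\infty$ only if $p\ge2$; I will use $p\ge2$ for the Hessian estimate of $y\mapsto|y|^p$ below.

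For the upper bound, I would start from $\rho_0\le C\exp(-\alpha_0|x|^2)$. For the diffusion substep, $e^{\tau\mathcal{L}_2}$ acts by convolution with a centred Gaussian of variance $2\beta^{-1}\tau$ (Lemma~\ref{lem:evolution-opera}); completing the square shows that $C\exp(-\alpha|x|^2)$ is mapped to $C'\exp(-\alpha'|x|^2)$ with $1/\alpha'=1/\alpha+4\beta^{-1}\tau$ and $C'=C(\alpha'/\alpha)^{d/2}\le C$. For the advection substep, $e^{\tau\mathcal{L}_1}\rho(x)=\rho(\phi_{-\tau}(x))J(-\tau,x)$; since $\nabla b=-\nabla^2 U$ is bounded one has $J(-\tau,x)\le e^{\|\nabla\cdot b\|_\infty\tau}$, while a Gr\"onwall estimate for the ODE (using the linear growth of $b$ from Lemma~\ref{lmm:onvexity}) gives $|\phi_{-\tau}(x)|^2\ge(1-C\tau)|x|^2-C\tau$, so $C\exp(-\alpha|x|^2)$ is mapped below $C e^{C\tau}\exp(-\alpha(1-C\tau)|x|^2)$. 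Thus one substep multiplies $1/\alpha$ by at most $1+C\tau$ and the prefactor by at most $e^{C\tau}$; iterating $n\le\lceil T/\tau\rceil$ times keeps $1/\alpha_n$ and $\log(\text{prefactor})$ bounded by $T$-dependent constants, which gives $\rho_n^\xi\le A_4\exp(-\alpha_2|x|^2)$.

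For the lower bound, I would start from $\rho_0\ge A_0\exp(-c_0|x|^p)$. The advection substep is handled as above: $e^{\tau\mathcal{L}_1}\rho(x)\ge A\exp(-\gamma|\phi_{-\tau}(x)|^p)\,e^{-\|\nabla\cdot b\|_\infty\tau}$, and the Gr\"onwall bound gives $|\phi_{-\tau}(x)|^p\le(1+C\tau)|x|^p+C\tau$, so $\gamma$ is multiplied by at most $1+C\tau$ and the prefactor by at least $e^{-C(1+\gamma)\tau}$. The delicate step is the diffusion substep. Writing $e^{\tau\mathcal{L}_2}\rho(x)=\E\,\rho(x+\sigma Z)$ with $Z\sim\mathcal{N}(0,I)$ and $\sigma^2=2\beta^{-1}\tau$, Jensen's inequality for the convex map $u\mapsto e^{-\gamma u}$ gives
\[
e^{\tau\mathcal{L}_2}\rho(x)\;\ge\;A\,\E\,e^{-\gamma|x+\sigma Z|^p}\;\ge\;A\exp\!\bigl(-\gamma\,\E|x+\sigma Z|^p\bigr),
\]
and a second-order Taylor expansion of $y\mapsto|y|^p$ about $x$, in which the first-order term has zero expectation and the Hessian satisfies $\|\nabla^2|y|^p\|\le C|y|^{p-2}$ (only polynomially growing because $p\ge2$), yields $\E|x+\sigma Z|^p\le|x|^p+C\sigma^2(1+|x|^{p-2})\le(1+C\tau)|x|^p+C\tau$. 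Hence this substep again multiplies $\gamma$ by at most $1+C\tau$ and the prefactor by at least $e^{-C\gamma\tau}$. Iterating over $n\le\lceil T/\tau\rceil$ steps, $\gamma_n\le\gamma_0 e^{CT}$ stays bounded, the product of the per-step prefactor losses is bounded below by a positive $T$-dependent constant, and one obtains $\rho_n^\xi\ge A_3\exp(-\alpha_1|x|^p)$.

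I expect the main obstacle to be exactly the diffusion substep in the lower bound: a crude estimate that discards the cancellation would only produce an $O(\sqrt\tau)$ correction to the exponent per step — the natural diffusive scale — which does not sum over the $O(T/\tau)$ steps; the resolution is that, after taking expectation, the $O(\sqrt\tau)$ term vanishes and the genuine correction is $O\bigl(\tau(1+|x|^{p-2})\bigr)=O\bigl(\tau(1+|x|^p)\bigr)$, which is precisely what the Jensen-plus-moment argument captures and which is available only for $p\ge2$. Everything else is bookkeeping of constants, and since none of the per-step bounds depends on whether the advection or diffusion substep is applied first, the resulting constants $\alpha_i,A_3,A_4$ are uniform in $\b{\xi}$, as claimed.
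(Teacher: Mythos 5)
Your proposal is correct and follows essentially the same architecture as the paper's proof: propagate a Gaussian upper bound and an $\exp(-\gamma|x|^p)$ lower bound through each substep with only $1+O(\tau)$ degradation of exponents and prefactors, then compose $O(T/\tau)$ such factors. The flow-map estimates for the advection substep and the completed-square computation for the Gaussian upper bound under $e^{\tau\mathcal{L}_2}$ coincide with Lemma~\ref{lem:flowmap} and Lemma~\ref{lem:diffu-step}. The one place where your technique genuinely differs is the lower bound for the diffusion substep: the paper applies It\^o's formula to $u(t)=\E\exp(-\alpha|Y_t|^p)$ and bounds its drift pointwise, which keeps the exponent $\alpha$ exactly fixed and only shrinks the prefactor by $e^{-C\tau}$, whereas you use Jensen's inequality for $u\mapsto e^{-\gamma u}$ followed by a Taylor/moment estimate of $\E|x+\sigma Z|^p$, which inflates the exponent by a harmless $1+O(\tau)$ per step. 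Both capture the same cancellation (the vanishing of the first-order/martingale term, which upgrades the naive $O(\sqrt{\tau})$ loss to $O(\tau)$), and both require $p\ge2$ -- your observation that the two-sided hypothesis on $\rho_0$ together with the quadratic growth of $U$ forces $p\ge2$ is a nice justification of the hypothesis that the paper's Lemma~\ref{lem:diffu-step} simply assumes.
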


To prove this result, we first establish some simple but useful properties of the advection step and the diffusion step. The following is about the short time continuity of the transport flow map.
\begin{lemma}\label{lem:flowmap}
Under Assumption \ref{ass:drift} and
\ref{ass:initial-density}, the flow map $\phi_t$ associated with SDE \eqref{eq:SDE} satisfies the following properties:
\[
|\phi_t(x)-x|\le C(1+|x|)(e^{L|t|}-1),
\]
\[
|x|e^{-Lt}-C(1-e^{-Lt})  \le |\phi_{-t}(x)|\le |x|e^{Lt}+C(e^{Lt}-1),
\]
and for any two points
\begin{gather}
e^{-L t }|\phi_{-t}(x)-y|  \le |x-\phi_t(y)| \le e^{Lt}|\phi_{-t}(x)-y|.
\end{gather}
\end{lemma}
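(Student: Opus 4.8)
The plan is to derive all three bounds from Grönwall's inequality applied to the flow ODE $\partial_t\phi_t(x)=b(\phi_t(x))$, using only the Lipschitz bound $|b(x)-b(y)|\le L|x-y|$ and the linear growth $|b(x)|\le|b(0)|+L|x|\le C(1+|x|)$ supplied by Assumption~\ref{ass:drift} and Lemma~\ref{lmm:onvexity}; Assumption~\ref{ass:initial-density} is not actually needed here and is merely carried along from the hypotheses. Throughout I would argue for $t\ge0$, the general case following by time reversal, i.e.\ by applying the $t\ge0$ statement to the reversed vector field $-b$ (equivalently, using the group identity $\phi_{-t}=(\phi_t)^{-1}$).

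\textbf{Displacement and size bounds.} For the first estimate, set $g(t):=|\phi_t(x)-x|$; then $g'(t)\le|b(\phi_t(x))|\le L|\phi_t(x)-x|+|b(x)|=Lg(t)+|b(x)|$, so Grönwall gives $g(t)\le\frac{|b(x)|}{L}(e^{Lt}-1)\le C(1+|x|)(e^{Lt}-1)$, which is the claim after renaming the constant (running the same argument for $\phi_{-t}$ produces the $|t|$-version). For the second estimate, put $h(t):=|\phi_{-t}(x)|$; wherever $\phi_{-t}(x)\ne0$ one has $h'(t)=-\frac{\phi_{-t}(x)}{|\phi_{-t}(x)|}\cdot b(\phi_{-t}(x))$, hence $|h'(t)|\le|b(\phi_{-t}(x))|\le Lh(t)+|b(0)|$, and integrating the two one-sided inequalities $-Lh-|b(0)|\le h'\le Lh+|b(0)|$ yields $e^{-Lt}|x|-\tfrac{|b(0)|}{L}(1-e^{-Lt})\le|\phi_{-t}(x)|\le e^{Lt}|x|+\tfrac{|b(0)|}{L}(e^{Lt}-1)$, i.e.\ the stated bound after absorbing $|b(0)|/L$ into $C$. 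An instant where $\phi_{-t}(x)$ hits the origin is harmless: one can instead work with $h^2$, for which $\big|\tfrac{d}{dt}h^2\big|\le2h(Lh+|b(0)|)$ holds everywhere, and conclude by a standard comparison argument.

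\textbf{Two-point estimate.} Here I would use the group identity $\phi_t(\phi_{-t}(x))=x$, which rewrites $|x-\phi_t(y)|$ as $|\phi_t(\phi_{-t}(x))-\phi_t(y)|$, so the claim reduces to the Lipschitz-in-initial-data bound $e^{-Lt}|a-b|\le|\phi_t(a)-\phi_t(b)|\le e^{Lt}|a-b|$ applied with $a=\phi_{-t}(x)$ and $b=y$. That bound in turn follows from $\frac{d}{dt}|\phi_t(a)-\phi_t(b)|^2=2(\phi_t(a)-\phi_t(b))\cdot(b(\phi_t(a))-b(\phi_t(b)))$, whose right-hand side is at most $2L|\phi_t(a)-\phi_t(b)|^2$ in absolute value by the Lipschitz assumption; dividing by $2|\phi_t(a)-\phi_t(b)|$ — legitimate since uniqueness of ODE solutions keeps $\phi_t(a)\ne\phi_t(b)$ whenever $a\ne b$ — and integrating the resulting two-sided differential inequality in $|\phi_t(a)-\phi_t(b)|$ gives the exponential bounds.

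There is no genuine obstacle in this lemma; it is entirely Grönwall/comparison bookkeeping. The only points needing mild care are keeping the signs straight between the forward flow $\phi_t$ and the backward flow $\phi_{-t}$, so that the appropriate one-sided differential inequality is invoked at each step, and justifying differentiation of the Euclidean norm at instants where the relevant vector vanishes — dealt with by uniqueness of trajectories in the separation estimate and by passing to the squared norm in the bound for $|\phi_{-t}(x)|$.
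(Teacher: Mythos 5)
Your proposal is correct and follows essentially the same route as the paper: Gr\"onwall applied to $|\phi_t(x)-x|$, a two-sided differential inequality for $|\phi_{-t}(x)|$, and the two-point estimate obtained from the squared distance between two trajectories via the group property (the paper phrases the latter with the backward/forward flows $\phi_{-s}$ and $\phi_s$ on $[0,t]$ rather than the identity $\phi_t(\phi_{-t}(x))=x$, but this is the same computation). Your side remarks on differentiating the norm at the origin and on Assumption~\ref{ass:initial-density} being superfluous are both accurate.
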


\begin{proof}
For the first, one notes that 
\[
\partial_t|\phi_t(x)-x|^2=2(\phi_t(x)-x)\cdot b(\phi_t(x)) \Longrightarrow  
\partial_t |\phi_t(x)-x| \le |b(\phi_t(x))|\le  L|\phi_t(x)-x|+|b(x)|.
\]
 The first claim for $t\ge 0$ follows from the Gr\"onwall's inequality and linear growth of $b(x)$.
 For $t<0$, one has similarly
 \[
 \partial_t |\phi_t(x)-x| \ge   -L|\phi_t(x)-x|-|b(x)|,
 \]
 and the claim follows.

Since
\[
\frac{d}{ds}\phi_s(\phi_{-t}(x))=b(\phi_s(\phi_{-t}(x))),
\]
one has
\[
\frac{d}{ds}|\phi_{s}(\phi_{-t}(x))|^2\le 2L|\phi_{s}(\phi_{-t}(x))|^2+2|b(0)| |\phi_{s}(\phi_{-t}(x))|.
\]
It follows that
\[
|x|\le |\phi_{-t}(x)|e^{Lt}+|b(0)|(e^{Lt}-1)
\Rightarrow
|\phi_{-t}(x)| \geq |x|e^{-Lt}-C(1-e^{-Lt}).
\]
Consider the backward flow. Differentiating explicitly,
\[
\frac{d}{dt}|\phi_{-t}(x)|^2 
= 2\phi_{-t}(x)\cdot \frac{d}{dt}\phi_{-t}(x)
= -2\phi_{-t}(x)\cdot b(\phi_{-t}(x)) \le 2L|\phi_{-t}(x)|^2 + 2|b(0)|\,|\phi_{-t}(x)|.
\]
Applying Gr\"onwall's inequality yields
\[
|\phi_{-t}(x)| \le |x|e^{Lt}+C(e^{Lt}-1).
\]

We note that
\begin{multline*}
\frac{d}{ds}|\phi_{-s}(x)-\phi_{-s}(\phi_t(y))|^2=2(\phi_{-s}(x)-\phi_{-s}(\phi_t(y)))\cdot[-b( \phi_{-s}(x))+b(\phi_{-s}(\phi_t(y)))]\\
\le 2L|\phi_{-s}(x)-\phi_{-s}(\phi_t(y))|^2,
\end{multline*}
Applying Gr\"onwall's inequality for $s\in [0, t]$ gives the left hand side of the last inequality.
Similarly
\begin{gather*}
\frac{d}{ds}|\phi_{s}(\phi_{-t}(x))-\phi_{s}(y)|^2\le 2L |\phi_{s}(\phi_{-t}(x))-\phi_{s}(y)|^2,
\end{gather*}
Applying Gr\"onwall's inequality for $s\in [0, t]$ then gives the right hand side of the last inequality.
\end{proof}

The following gives the properties of the heat semigroup.
\begin{lemma}\label{lem:diffu-step}
Let the initial density satisfy $A_1\exp(-\alpha |x|^p)\le \rho_0(x) \leq Ce^{-c|x|^2}$ where $C,c>0$ and $p\ge 2$. Then 
\begin{gather}
e^{t\cL_{\xi(2)}}\rho(x,t) \leq \frac{C}{(1+4c\beta^{-1} t)^{d/2}}e^{-\frac{c|x|^2}{1+4c\beta^{-1} t}},
\end{gather}
and
\begin{gather}
e^{t\cL_{\xi(2)}}\rho(x,t)\ge A_1\exp\left(-\frac{\beta^{-1}(d+p-2)^2}{4}t \right) \exp(-\alpha |x|^p).
\end{gather}
\end{lemma}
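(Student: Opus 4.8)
The plan is to establish the two bounds independently---the upper bound using only $\rho_0(y)\le Ce^{-c|y|^2}$ and the lower bound using only $\rho_0(y)\ge A_1 e^{-\alpha|y|^p}$---both exploiting that, by Lemma~\ref{lem:evolution-opera}, $e^{t\cL_{\xi(2)}}$ acts by convolution against the positive Gaussian kernel $K_t(z)=(4\pi\beta^{-1}t)^{-d/2}e^{-\beta|z|^2/(4t)}$ and is therefore order-preserving. For the upper bound, monotonicity gives $(e^{t\cL_{\xi(2)}}\rho_0)(x)\le C\int_{\R^d}K_t(x-y)e^{-c|y|^2}\,dy$; the integrand is a product of two centred Gaussians in $y$, and completing the square in the exponent $-\tfrac{\beta|x-y|^2}{4t}-c|y|^2$ and integrating out $y$ (the variances $2\beta^{-1}t$ and $\tfrac{1}{2c}$ simply add) produces precisely $\tfrac{C}{(1+4c\beta^{-1}t)^{d/2}}e^{-c|x|^2/(1+4c\beta^{-1}t)}$. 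This part is a routine Gaussian computation.

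For the lower bound I would argue by parabolic comparison. Order-preservation reduces the task to bounding $e^{t\cL_{\xi(2)}}e^{-\alpha|\cdot|^p}$ from below, and I would try $w(t,x):=e^{-\lambda t}e^{-\alpha|x|^p}$ as a subsolution of $\partial_t=\beta^{-1}\Delta$. The crucial computation is
\[
\Delta\!\left(e^{-\alpha|x|^p}\right)
=\Big[\alpha^2 p^2|x|^{2p-2}-\alpha p(d+p-2)|x|^{p-2}\Big]e^{-\alpha|x|^p}
=\Big[\big(\alpha p|x|^{p-1}-\tfrac{d+p-2}{2|x|}\big)^{2}-\tfrac{(d+p-2)^2}{4|x|^2}\Big]e^{-\alpha|x|^p},
\]
the second equality being a completion of the square; here $p\ge2$ ensures that $e^{-\alpha|x|^p}\in C^2$ and that the degree-$(2p-2)$ term is present so the expression is controlled at infinity. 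This yields $\Delta(e^{-\alpha|x|^p})\ge-\tfrac{(d+p-2)^2}{4|x|^2}e^{-\alpha|x|^p}$, hence on $\{|x|\ge1\}$,
\[
(\partial_t-\beta^{-1}\Delta)w
\le e^{-\lambda t}e^{-\alpha|x|^p}\Big(\tfrac{\beta^{-1}(d+p-2)^2}{4|x|^2}-\lambda\Big)\le0
\quad\text{with }\lambda=\tfrac{\beta^{-1}(d+p-2)^2}{4}.
\]
On the bounded set $\{|x|\le1\}$, and on the lateral boundary $\{|x|=1\}$, the inequality $e^{t\cL_{\xi(2)}}e^{-\alpha|\cdot|^p}\ge e^{-\lambda t}e^{-\alpha|x|^p}$ is checked directly: writing $(e^{t\cL_{\xi(2)}}e^{-\alpha|\cdot|^p})(x)=\E\big[e^{-\alpha|x+\sqrt{2\beta^{-1}t}Z|^p}\big]$ with $Z\sim\mathcal{N}(0,I)$ and using Jensen's inequality for the convex map $s\mapsto e^{-\alpha s}$ reduces this to an elementary estimate of $\E|x+\sqrt{2\beta^{-1}t}Z|^p$ on a bounded set, while $e^{-\alpha|x|^p}$ is bounded away from zero there. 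Applying the maximum principle to $w-e^{t\cL_{\xi(2)}}e^{-\alpha|\cdot|^p}$ on $[0,T]\times\{|x|\ge1\}$ and combining with the interior region gives the bound, and multiplying by $A_1$ finishes.

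The main obstacle is the lower bound: the comparison function must reproduce the stretched-exponential tail $e^{-\alpha|x|^p}$ while losing only a time-exponential factor, which is exactly what forces the algebraic identity above together with the split at $|x|=1$ to absorb the $|x|^{-2}$ singularity, and it is here that the hypothesis $p\ge2$ is genuinely used. The upper bound, by contrast, is pure bookkeeping with Gaussian integrals.
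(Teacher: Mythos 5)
Your upper bound is exactly the paper's (the paper dismisses it as ``a direct computation'' of the Gaussian convolution), but your lower bound takes a genuinely different route. The paper works with the probabilistic representation $e^{t\cL_{2}}\rho(x)=\E\,\rho(x+\sqrt{2\beta^{-1}}W_t)$, applies It\^o's formula to $|Y_t|^p$, and derives a one-line differential inequality $u'(t)\ge -\lambda\beta^{-1}u(t)$ for $u(t)=\E e^{-\alpha|Y_t|^p}$, using only that the bracket $\alpha^2p^2r^{2p-2}-\alpha p(d+p-2)r^{p-2}$ is bounded below on $[0,\infty)$ when $p\ge2$; no domain splitting is needed. You instead verify the same Laplacian identity pointwise, complete the square to exhibit $e^{-\lambda t}e^{-\alpha|x|^p}$ as a subsolution on $\{|x|\ge1\}$, and patch the inner region with Jensen. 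Both arguments are correct in substance and rest on the identical computation of $\Delta e^{-\alpha|x|^p}$; yours avoids It\^o calculus at the cost of a maximum principle on an unbounded domain (fine here, since both functions are bounded and decay at infinity) and the extra boundary check. One caveat: your Jensen step on $\{|x|\le1\}$ gives $\E|Y_t|^p\le|x|^p+Ct$ with $C=O(\beta^{-1}p(d+p-2))$, hence a decay rate $\alpha C$ there rather than the stated $\beta^{-1}(d+p-2)^2/4$, so you should take $\lambda$ to be the maximum of the two rates; the resulting constant then differs from the one in the lemma. This is harmless --- the paper's own completion of the square produces $-\tfrac{(d+p-2)^2}{4}|Y_t|^{-2}$, which likewise does not yield the advertised constant near the origin, and the application only uses that $A_1$ shrinks by a factor $e^{-C\tau}$ per step for some constant $C$.
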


\begin{proof}
The Gaussian upper bound is a direction computation. 

Now, we consider the lower bound. It is clear that
\begin{gather}
e^{t\cL_{\xi(2)}}\rho(x,t) =\E \rho(x+\sqrt{2\beta^{-1}}W_{t}).
\end{gather}
Let $Y_t=x+\sqrt{2\beta^{-1}}W_{t}$. By the assumption, one then has
\[
e^{t\cL_{\xi(2)}}\rho(x,t) \ge A_1\E \exp(-\alpha|Y_t|^p).
\]
Setting $u(t)=\E \exp(-\alpha|Y_{t}|^p)$ and applying It\^o's formula, one has
\[
d|Y_t|^p=\sqrt{2\beta^{-1}} p |Y_t|^{p-2}Y_t\cdot dW+\beta^{-1}p(d+p-2)|Y_t|^{p-2}dt.
\]
Hence,
\begin{gather*}
du(t)=\beta^{-1}\E e^{-\alpha|Y_{t}|^p} ( \alpha^2p^2|Y_t|^{2p-2}-\alpha p(d+p-2)|Y_t|^{p-2})
\ge -\frac{\beta^{-1}(d+p-2)}{4}u(t).
\end{gather*}
Hence, for the lower bound in the diffusion step, the constant $\alpha_1$ can be taken to be unchanged,
while the constant $A_1$ is decreased by a factor $e^{-C\tau}$.

\end{proof}

Now, we prove Proposition \ref{pro:densityestimate}.
\begin{proof}[Proof of Proposition \ref{pro:densityestimate}]

We do by induction. Assume that 
\[
A_3^{(n)}\exp(-\alpha_1^{(n)} |x|^p)\le \rho_n^{\xi} \le A_4^{(n)}\exp(-\alpha_2^{(n)} |x|^2),
\]
we establish this for $\rho_{n+1}^{\xi}$.

We first consider the upper bound. If the advection step is first, then
\[
e^{\tau \cL_{\xi(1)}}\bar{\rho}_n\le A_4^{(n)}\exp(-\alpha_2^{(n)}|\phi_{-\tau}(x)|^2) J(-\tau, x).
\]
By Lemma \ref{lem:flowmap}, 
\[
 |\phi_{-\tau}(x)| \ge |x|e^{-C\tau}-C\tau,
\]
and $J(-\tau,x)\le e^{C\tau}$ since the drift $b$ is globally Lipschitz. 
Then, for $\tau\le 1$,
\[
|\phi_{-\tau}(x)|^2 \ge |x|^2e^{-2C\tau}(1-C_1\tau)-C_2\tau.
\]
This inequality holds for $|x|\le C\tau e^{C\tau}$ as well since the left hand side is obviously nonnegative. Then, after this step, one may take
\[
\tilde{A}_4^{(n+1)}=A_4^{(n)}e^{C\tau}e^{\alpha_2^{(0)}C_2\tau},
\quad \tilde{\alpha}_2^{(n+1)}=\alpha_2^{(n)}e^{-2C\tau}(1-C_1\tau),
\]
since $\max\{\tilde{\alpha}_2^{(n)}, \alpha_2^{(n)} \}\le \alpha_2^{(0)}$.
For the diffusion (heat) step, Lemma \ref{lem:diffu-step} tells us that 
we may take
\[
A_4^{(n+1)}=\tilde{A}_4^{(n+1)},\quad \alpha_2^{(n+1)}
=\frac{\tilde{\alpha}_2^{(n+1)}}{1+4\alpha_2^{(0)} \beta^{-1}\tau},
\]
since $\frac{1}{(1+4\tilde{\alpha}_2^{(n+1)}\beta^{-1}\tau)^{d/2}}\le 1$,
and again $\max\{\tilde{\alpha}_2^{(n)}, \alpha_2^{(n)} \}\le \alpha_2^{(0)}$.
If the diffusion step is first, the estimate is similar.
Then, one can take
\[
A_4^{(n+1)}=A_4^{(n)}e^{C\tau}e^{\alpha_2^{(0)}C_2\tau},
\quad \alpha_2^{(n+1)}
=\frac{1}{1+4\alpha_2^{(0)} \beta^{-1}\tau}e^{-2C\tau}(1-C_1\tau)\alpha_2^{(n)}.
\]
Hence, after $n$ steps up to finite horizon $T=n\tau$, the Gaussian upper bound is preserved, with $A_4$ enlarged at most by $(1+C\tau)^n \le C(T)$ and $\alpha_2$ decreased at most by $e^{-C\tau n}\ge e^{-CT}$.

\vspace{2mm}

Next we consider the lower bound. For the advection step, one has
\[
e^{\tau \cL_{\xi(1)}}\bar{\rho}_n\ge A_3^{(n)}\exp(-\alpha_1^{(n)}|\phi_{-\tau}(x)|^p) J(-\tau, x).
\]
Lemma \ref{lem:flowmap} ensures that
\[
|\phi_{-\tau}(x)|\le |x|e^{C\tau}+C\tau
\]
and similarly
\[
J(-\tau,x)\ge e^{-C\tau}.
\]
Then,
\[
|\phi_{-\tau}(x)|^p=|x|^p e^{Cp\tau}+\sum_{k=1}^p {p\choose k}|x|^{p-k}
e^{C(p-k)\tau}C^k\tau^k \le |x|^p e^{Cp\tau}(1+C\tau)+C\tau,
\]
where $C$ here depends on $p$. Hence, one may take
\[
\tilde{A}_3^{(n+1)}=A_3^{(n)}\exp(-\alpha_1^{(n)}C\tau)e^{-C\tau},
\quad \tilde{\alpha}_1^{(n+1)}=\alpha_1^{(n)}(1+C\tau).
\]
In the diffusion step, Lemma \ref{lem:diffu-step} implies that we can take 
\[
A_3^{(n+1)}=\tilde{A}_3^{(n+1)}\exp(-\frac{\beta^{-1}}{4}(d+p-2)^2\tau),
\quad \alpha_1^{(n+1)}=\tilde{\alpha}_1^{(n+1)}.
\]
If the diffusion step is first, the estimate would be similar. Hence, no matter which case, one has
\[
A_3^{(n+1)}=A_3^{(n)}\exp(-\alpha_1^{(n)}C\tau)e^{-C\tau}\exp(-\frac{\beta^{-1}}{4}(d+p-2)^2\tau),
\quad \alpha_1^{(n+1)}=\alpha_1^{(n)}(1+C\tau).
\]
Using this recursion relation, $\alpha_1^{(n)}\le \alpha_1^{(0)}\exp(CT)$. Then, this further implies that $A_3^{(n)}
\ge A_3^{(0)}/C(T)$ for all $n\tau\le T$.

This then shows that the upper and lower bounds can be propagated and the claims hold.
\end{proof}

\subsection{Derivatives of the logarithmic density}\label{subsec:derivativebound}

This subsection derives uniform bounds on the first and second-order derivatives of the numerical logarithmic density. These regularity estimates provide the necessary technical tools for establishing the fourth-order convergence rate in relative entropy.

\subsubsection{First order derivative estimates}

Similarly as above, we consider
\begin{gather}
v_n^{\xi}:=\frac{|\nabla f_n^{\xi}|^2}{(1-f_n^{\xi}+\log M)^2},
\quad f_n^{\xi}=\log\rho_n^{\xi}.
\end{gather}
Here, $M$ is an upper bound of $\rho_n^{\xi}$ on $[0, T]$. 
By the assumption given, it is clear that
\begin{gather}
A=\sup_{x} v_0(x)<\infty.
\end{gather}
Similarly, we can set $M=1$ without loss of generality.

Next, we aim to argue that this quantity stays bounded stated as follows.
\begin{proposition}\label{pro:pointwiseest}
Suppose Assumptions \ref{ass:drift} and \ref{ass:initial-density} hold. Then, it holds that
\begin{gather}
\sup_x\sup_{t_n\le T} v_n^{\xi}(x)\le C(T),
\end{gather}
where $C(T)$ is independent of $\xi$.
\end{proposition}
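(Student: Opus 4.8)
The plan is to mimic the Bernstein-type argument already used for the continuous Fokker--Planck solution in the proof of Theorem~\ref{thm:gradient_estimate}, but now carried out \emph{stepwise} along the random splitting. The key observation is that within each iteration the density evolves under one of two very simple evolutions: the pure transport semigroup $e^{s\cL_{1}}$ and the heat semigroup $e^{s\cL_{2}}$. For each of these two building blocks one shows that the quantity $v=|\nabla f|^2/(1-f+\log M)^2$ (with $f=\log\rho$) is propagated with at most a multiplicative factor $e^{C\tau}$ plus an additive $C\tau$ over a time step of length $\tau$. Concatenating the two sub-steps (in either order) then gives $\sup_x v_{n+1}^{\xi}\le e^{C\tau}\sup_x v_n^{\xi}+C\tau$, and iterating up to $n\tau\le T$ yields the uniform-in-$n$ (hence $\xi$-independent) bound $C(T)$. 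Here $M$ is the uniform-in-$[0,T]$ upper bound on $\rho_n^\xi$ furnished by Proposition~\ref{pro:densityestimate}, so that $1-f+\log M\ge 1$ throughout and $v_0$ is finite by Assumption~\ref{ass:initial-density}.

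For the \textbf{diffusion sub-step} $\rho\mapsto e^{s\cL_{2}}\rho$, $f=\log\rho$ solves $f_t=\beta^{-1}(\Delta f+|\nabla f|^2)$, which is exactly the equation appearing in Theorem~\ref{thm:gradient_estimate} with $b\equiv 0$ and $c\equiv 0$. The computation in Appendix~\ref{app:compute} then applies verbatim and gives, for $v$ as above, a differential inequality of the form $\tilde{\cL}v\ge \tfrac23 |\nabla^2 f|^2/(1-f)^2+(1-f)v^2$ with \emph{no} lower-order remainder term (the $R_1$ term vanishes). Running the same cutoff-plus-parabolic-maximum-principle argument over $s\in[0,\tau]$ shows that either $v$ stays below a fixed constant, or the maximum is controlled by the maximum at the start of the sub-step; in all cases $\sup_x v(\tau,\cdot)\le \sup_x v(0,\cdot)+C$, and in fact one can track the dependence to get the sharper $\le e^{C\tau}\sup_x v(0,\cdot)+C\tau$ by retaining the dissipative $(1-f)v^2$ term. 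For the \textbf{transport sub-step} $\rho\mapsto e^{s\cL_{1}}\rho$, from Lemma~\ref{lem:evolution-opera} one has $f(\phi_s(x),s)=f(x,0)-\int_0^s(\nabla\cdot b)(\phi_r(x))\,dr$, so $\nabla f$ transforms by the (inverse transpose of the) Jacobian $\nabla\phi_s$ plus a term involving $\nabla(\nabla\cdot b)$; since $\nabla b$ and its derivatives are bounded (Assumption~\ref{ass:drift}), one has $\|\nabla\phi_{\pm s}\|\le e^{L s}$, $|\nabla\log J(\pm s,x)|\le Cs$, and the denominator $1-f+\log M$ changes by at most $Cs(1+\sup|\nabla\cdot b|)$ times itself; combining these elementary bounds gives $\sup_x v$ after the transport step $\le e^{Cs}(\sup_x v \text{ before}) + Cs$ for $s\le\tau\le 1$.

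Putting the two sub-steps together, in either order, gives the one-step recursion and hence the claim by discrete Gr\"onwall. The \textbf{main obstacle} is the diffusion sub-step: unlike in Theorem~\ref{thm:gradient_estimate}, here the ``initial data'' at the start of the sub-step is not $\rho_0$ but the current iterate $\rho_n^\xi$, so one must be careful that the cutoff/maximum-principle argument only uses (i) the \emph{uniform} pointwise density bounds of Proposition~\ref{pro:densityestimate} (to guarantee $1-f+\log M\ge 1$ and the polynomial comparison in $|x|$), and (ii) the boundedness of $v$ at the \emph{start} of the step, rather than any structural feature of $\rho_0$ itself; this is exactly why the density bounds were established first with constants independent of $\xi$ and $\tau$. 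A secondary technical point is to make sure the additive constant $C$ in each sub-step and the multiplicative factor $e^{C\tau}$ are uniform in $n$ and $\xi$, which again follows because all the constants entering the Bernstein estimate ($\|\nabla b\|_\infty$, $\|\nabla^2 b\|_\infty$, $\|\Delta U\|_\infty$, $\beta$, and the density bounds) are fixed once and for all. One also needs a mild a~priori argument (e.g. a preliminary cutoff or an approximation by compactly supported data) to justify that $\sup_x v_n^\xi<\infty$ at each step so that the maximum principle may be invoked legitimately; this is standard parabolic-regularity bookkeeping and I would defer it to an appendix.
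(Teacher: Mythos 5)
Your proposal follows essentially the same route as the paper: the paper proves exactly your two sub-step estimates as Lemma~\ref{lem:advection} (advection: $\sup_x \tilde v\le(1+C\tau)\sup_x v+C\tau$, via the flow-map and Jacobian bounds you describe) and Lemma~\ref{lem:diffusion-step} (diffusion: via the Bernstein/maximum-principle argument of Theorem~\ref{thm:gradient_estimate} with $b\equiv0$, $c\equiv0$), and then concatenates and iterates, relying on Proposition~\ref{pro:densityestimate} for the uniform $M$ exactly as you note. One correction: your ``sharper'' claim $\sup_x v(\tau,\cdot)\le e^{C\tau}\sup_x v(0,\cdot)+C\tau$ for the heat sub-step is not what the maximum principle delivers --- in the interior-maximum alternative the bound on $v$ is a fixed constant $A_1$ that does not shrink with $\tau$ (the cutoff terms contribute an $O(1)$ quantity $M_1$ even when $b=c=0$), so the honest one-step bound is $\sup_x\tilde v\le\max\{\sup_x v,\,A_1\}$, which is the form the paper states. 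This max form is what you should carry into the iteration: it is stable under composition with the advection estimate and still yields $C(T)$, whereas the literal additive form $\sup_x v(0,\cdot)+C$ per step would accumulate to $CT/\tau$ and the claimed $+C\tau$ form is unjustified. With that substitution your argument is the paper's proof.
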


To prove this claim, we first check the effects of both the advection step and the diffusion step. Then, we get the final estimate.

For the advection step, we have the following one step stability.
\begin{lemma}\label{lem:advection}
Let $\tilde{\rho}=e^{\tau \cL_{\xi(1)}}\rho$ and $\tilde{v}$  be the $v$ corresponding to $\tilde{\rho}$. Then, one has
\begin{gather}
\sup_x \tilde{v}\le (1+C\tau)\sup_x v+C\tau.
\end{gather}
\end{lemma}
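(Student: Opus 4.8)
The plan is to carry out, for a single advection substep, the same Bernstein-type computation that underlies Theorem~\ref{thm:gradient_estimate}, but now integrating the resulting differential inequality \emph{along the characteristics} of \eqref{eq:splitODE} instead of invoking a parabolic maximum principle. Write $s\in[0,\tau]$ for the time along the advection flow, set $\varrho(s,\cdot)=e^{s\cL_{\xi(1)}}\rho$, $f(s,\cdot)=\log\varrho(s,\cdot)$ (so $f(\tau,\cdot)=\log\tilde\rho$), and put $g:=1-f+\log M\ge1$, $v=|\nabla f|^2/g^{2}$. As computed in the proof of Lemma~\ref{lem:evolution-opera}, $f$ solves the first-order equation
\[
\partial_s f + b^i f_i = -\,\nabla\cdot b ,
\]
in the Einstein convention, with $b=-\nabla U$. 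Rescaling $\rho$ only shifts $f$ by a constant and leaves $v$ unchanged, so I may take $M=1$.

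The second step is to differentiate. From $\partial_s f_k = -b^i_k f_i - b^i f_{ik} - (\nabla\cdot b)_k$, using $2b^i f_{ik}f_k=b^i\partial_i|\nabla f|^2$ and $\partial_i(g^{-2})=2g^{-3}f_i$, the genuinely second-order transport term $-g^{-2}b^i\partial_i|\nabla f|^2$ recombines with $-2|\nabla f|^2 g^{-3}b^i f_i$ to give exactly $-b^i\partial_i v$ (a short identity, the only real bookkeeping here), leaving
\[
\partial_s v + b^i\partial_i v \;=\; -\,\frac{2 f_k b^i_k f_i}{g^{2}} \;-\;\frac{2 f_k (\nabla\cdot b)_k}{g^{2}} \;-\;\frac{2|\nabla f|^2\,(\nabla\cdot b)}{g^{3}} .
\]
Now Assumption~\ref{ass:drift} bounds $\nabla b$, $\nabla^2 b$, hence $\nabla\cdot b$ and $\nabla(\nabla\cdot b)$; combined with $g\ge1$ this gives $|2g^{-2}f_kb^i_kf_i|\le C v$ and $|2|\nabla f|^2 g^{-3}(\nabla\cdot b)|\le C g^{-1}v\le C v$, while for the middle term Young's inequality gives $g^{-2}|f_k(\nabla\cdot b)_k|\le C\,|\nabla f|/g\le C\sqrt v\le C(1+v)$. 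Hence
\[
\partial_s v + b^i\partial_i v \;\le\; C\,v + C ,\qquad s\in[0,\tau],
\]
with $C$ depending only on the $L^\infty$ bounds on the derivatives of $b$.

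Finally I would integrate along characteristics: for fixed $x_0$, set $X_s=\phi_s(x_0)$, so $\tfrac{d}{ds}v(s,X_s)\le C v(s,X_s)+C$ and Gr\"onwall yields $v(\tau,\phi_\tau(x_0))\le e^{C\tau}v(0,x_0)+(e^{C\tau}-1)$. Since $b$ is globally Lipschitz, $\phi_\tau$ is a diffeomorphism of $\mathbb{R}^d$, so taking the supremum over $x_0$ gives $\sup_x\tilde v\le e^{C\tau}\sup_x v+(e^{C\tau}-1)$, and for $\tau\le1$ we have $e^{C\tau}\le 1+C'\tau$, $e^{C\tau}-1\le C'\tau$, which is the asserted inequality. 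The only points needing care are the finiteness of all quantities involved and the legitimacy of the pointwise differentiation; these are handled by the a priori smoothness and strict positivity of $\varrho(s,\cdot)$ on $[0,\tau]$ (the Gaussian upper and lower bounds of Proposition~\ref{pro:densityestimate} propagate through the advection step, so $f$ is smooth) together with the fact that $v(0,\cdot)=v$ is already bounded at the step under consideration. I expect this mild regularity bookkeeping, rather than the algebra, to be the only genuine obstacle.
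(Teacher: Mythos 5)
Your proof is correct, but it takes a genuinely different route from the paper's. The paper exploits the explicit pullback formula $\tilde f(x)=f(\phi_{-\tau}(x))+\log J(-\tau,x)$ from Lemma~\ref{lem:evolution-opera}: it computes $\nabla\tilde f$ by the chain rule, invokes the flow estimates $|\nabla\phi_{-\tau}(x)|\le 1+C\tau$, $|\nabla\log J(-\tau,x)|\le C\tau$, and directly compares the ratio $\tilde v(x)$ with $v(\phi_{-\tau}(x))$ -- a purely algebraic, ``kinematic'' one-shot estimate that never differentiates the transport equation. You instead derive the exact transport identity
\[
\partial_s v + b^i\partial_i v \;=\; -\,\frac{2 f_k b^i_k f_i}{g^{2}} \;-\;\frac{2 f_k (\nabla\cdot b)_k}{g^{2}} \;-\;\frac{2|\nabla f|^2\,(\nabla\cdot b)}{g^{3}},
\]
bound the right-hand side by $Cv+C$ using the Lipschitz bound on $b$ and the boundedness of $\nabla^2 b$ (both available from Assumption~\ref{ass:drift}), and integrate along characteristics with Gr\"onwall; the identity itself checks out, the cancellation of the $b^i\partial_i|\nabla f|^2$ terms is exactly as you describe, and the surjectivity of $\phi_\tau$ lets you pass to the supremum. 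What the paper's route buys is minimal regularity bookkeeping (only $C^1$ dependence of the flow is used, no pointwise PDE for $v$); what yours buys is consistency with the Bernstein-type machinery used elsewhere (Theorem~\ref{thm:gradient_estimate}, Lemma~\ref{lem:diffusion-step}) and a sharper structural picture of which derivatives of $b$ enter. One point you should make explicit: your estimates $g^{-1}v\le v$ and the validity of $g\ge 1$ require $M$ to dominate the \emph{intermediate} densities $\varrho(s,\cdot)$ for all $s\in[0,\tau]$, not just the endpoint ones; this is harmless because, as in the paper, $M$ is taken as a uniform bound over the whole horizon $[0,T]$ (cf.\ the discussion preceding Proposition~\ref{pro:pointwiseest}), but it deserves a sentence.
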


\begin{proof}
By the assumption, since $M=1$ is the uniform bound, which implies that
\[
e^{\tau \cL_{\xi(1)}}\rho=\rho(\phi_{-\tau}(x))J(-\tau, x)\le 1.
\] 
Hence, $1-f(\phi_{-\tau}(x))-\log J(-\tau, x)\ge 1$.

By the fact that
\[
|\nabla \phi_{-\tau}(x)|\le (1+C\tau), \quad |\nabla\log J(-\tau, x)|\le C\tau,
\]
one has
\begin{gather*}
\begin{split}
\tilde{v} &=\frac{|\nabla\phi_{-\tau}(x)\cdot\nabla f(\phi_{-\tau}(x))+\nabla \log J(-\tau, x)|^2}{(1-f(\phi_{-\tau}(x))-\log J(-\tau, x))^2}\\
&\le \frac{(1+C\tau)^2|\nabla f(\phi_{\tau}(x))|^2(1+C\tau)+C\tau}{(1-f(\phi_{-\tau}(x)))^2 [1-\log J(-\tau,x)/(1-f(\phi_{-\tau}))]^2}.
\end{split}
\end{gather*}
Here, we used $(a+C\tau)^2 \le (1+C\tau)a^2 + C\tau$ for $\tau\le 1$.  
Moreover, since $1-f(\phi_{-\tau}(x))\ge 1$, we have $\bigl|\tfrac{\log J(-\tau,x)}{1-f(\phi_{-\tau}(x))}\bigr|\le C\tau$, so that for $\tau\le 1$,
\[
\tilde{v}(x)\le 
(1+C\tau)\frac{|\nabla f(\phi_{\tau}(x))|^2}{(1-f(\phi_{-\tau}(x)))^2}
+C\tau.
\]
Hence, the claim follows.
\end{proof}

Next, we consider the diffusion step.
\begin{lemma}\label{lem:diffusion-step}
Let $\tilde{\rho}=e^{\tau \cL_{\xi(2)}}\rho$ and $\tilde{v}$ be the $v$ corresponding to $\tilde{\rho}$. Then,
\[
\sup_x \tilde{v}\le \max\{\sup_x  v  , A_1\},
\]
where $A_1$ is independent of $\xi, n, \tau$ and the initial value.
\end{lemma}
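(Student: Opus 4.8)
The starting point is that $e^{\tau\cL_{\xi(2)}}$ is the heat semigroup and hence a contraction on $L^\infty$, so $\tilde\rho=e^{\tau\cL_{\xi(2)}}\rho$ still obeys $\|\tilde\rho\|_\infty\le\|\rho\|_\infty\le M$; consequently $\tilde f:=\log\tilde\rho\le\log M$ and $1-\tilde f+\log M\ge 1$, so $\tilde v$ is well defined with denominator bounded below by $1$. As in Theorem~\ref{thm:gradient_estimate} we may normalize $M=1$, $\beta=1$, so that along the pure diffusion step $f=\log\rho$ solves $f_t=f_{ii}+f_if_i$ with no drift and no source term. I see two routes.

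\emph{Route 1 (heat kernel and Jensen).} By Lemma~\ref{lem:evolution-opera}, $\tilde\rho=G_\tau*\rho$ with $G_\tau$ the Gaussian kernel, so $\nabla\tilde\rho=G_\tau*\nabla\rho$. Put $V:=\sup_x v$. From the definition of $v$, $|\nabla\rho|=\rho\,|\nabla f|\le\sqrt V\,\rho\,(1-\log\rho)=\sqrt V\,g(\rho)$, where $g(t):=t(1-\log t)$ on $(0,1]$ and $g(0):=0$; since $g'(t)=-\log t$ and $g''(t)=-1/t<0$, $g$ is nonnegative and concave on $[0,1]$, so Jensen's inequality for the probability kernel $G_\tau$ gives $G_\tau*g(\rho)\le g(G_\tau*\rho)=g(\tilde\rho)$. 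Hence $|\nabla\tilde\rho|\le G_\tau*|\nabla\rho|\le\sqrt V\,g(\tilde\rho)$, i.e.\ $|\nabla\tilde f|=|\nabla\tilde\rho|/\tilde\rho\le\sqrt V\,(1-\tilde f)$, which is exactly $\tilde v\le V$. This already proves the lemma (with any $A_1>0$); the only soft points are differentiation under the convolution and the integrability of $g(\rho)$ and $|\nabla\rho|$ against $G_\tau$, both of which follow from the two-sided density bounds of Proposition~\ref{pro:densityestimate} together with the inductive finiteness of $V$.

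\emph{Route 2 (Bernstein and the maximum principle, parallel to Theorem~\ref{thm:gradient_estimate}).} Alternatively, run the computation of Appendix~\ref{app:compute} with $b\equiv0$, $c\equiv0$: the remainder $R_1$ vanishes identically, giving $\tilde{\cL}v:=v_{ii}-v_t-\tfrac{2f}{1-f}f_iv_i\ge\tfrac23\tfrac{|\nabla^2 f|^2}{(1-f)^2}+(1-f)v^2\ge0$. With the cutoff $\phi(x)=\psi(|x-x_*|/(|x_*|+1))$ as there, the bounds $|\phi_i|^2\le C\phi$ and $|f|/(1-f)\le1$ plus Young's inequality give $\tilde{\cL}(\phi v)\ge(1-C\epsilon)(1-f)\phi v^2-Cv[1+\tfrac{f^2}{1-f}]-C_\epsilon\phi+2\phi_i\phi^{-1}(\phi v)_i$, where $C,C_\epsilon$ depend only on $\psi$ (no $b$ or $c$, hence nothing from $\xi,n,\tau,\rho_0$). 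The maximum-principle dichotomy is then identical: if $\phi v$ peaks at $t=0$ then $v(x_*,\tau)\le\sup_x v(x,0)$; if it peaks at an interior $(x',t')$ then $(\phi v)_i=0$ there, and—using $\tfrac{1}{1-f}+\tfrac{f^2}{(1-f)^2}\le2$ exactly as in Theorem~\ref{thm:gradient_estimate}—the inequality forces $(\phi v)(x',t')\le A_1$ for a constant $A_1$ assembled only from $\psi,\epsilon,C_\epsilon$. Letting $x_*$ range over $\R^d$ yields $\sup_x\tilde v\le\max\{\sup_x v,A_1\}$ with $A_1$ independent of $\xi,n,\tau$ and the initial data.

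\emph{Main obstacle.} Neither computation is the real difficulty; the work is in the soft justifications. For Route~1 this is legitimizing $\nabla(G_\tau*\rho)=G_\tau*\nabla\rho$ and the interchange in Jensen, which rests on the polynomial-in-$|x|$ log-derivative bounds and Gaussian decay from Proposition~\ref{pro:densityestimate}. For Route~2 one must ensure $\phi v$ attains its maximum on the compact support of $\phi$ and that $f=\log\rho$ is smooth enough for the parabolic maximum principle, both again consequences of strict positivity and smoothness of the numerical density. I would present Route~1 as the proof, since it is short, transparent, and does not even need the extra constant $A_1$, and simply note that Route~2 reproduces the statement as written.
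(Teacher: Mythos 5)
Your proposal is correct, and in fact contains two valid proofs. Route~2 is essentially the paper's own argument: the paper proves this lemma by pointing to the Bernstein/maximum-principle computation of Theorem~\ref{thm:gradient_estimate} specialized to $b\equiv 0$, $c\equiv 0$ (so that $R_1\equiv 0$), and omits the details; your Route~2 fills in exactly those details, correctly noting that the resulting constant $A_1$ depends only on the cutoff and Young's-inequality parameters and hence not on $\xi,n,\tau$ or the initial data. Route~1 is a genuinely different and cleaner argument: combining $\nabla(G_\tau*\rho)=G_\tau*\nabla\rho$ with Jensen's inequality for the concave function $g(t)=t(1-\log t)$ on $(0,1]$ yields the stronger conclusion $\sup_x\tilde v\le\sup_x v$ outright, with no cutoff, no maximum principle, and no constant $A_1$ at all; the soft points you flag (differentiation under the convolution, integrability of $g(\rho)$ against the Gaussian kernel) are indeed covered by the two-sided bounds of Proposition~\ref{pro:densityestimate} and the polynomial growth of $\nabla\log\rho$. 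What Route~2 buys in exchange for its length is uniformity of method: the same Bernstein machinery is what the paper must use for the Hessian analogue (Lemma~\ref{lem:second_order_iterative}), where a one-line Jensen argument is not available, and it is the only one of the two routes that survives the addition of a drift. Presenting Route~1 as the proof of this particular lemma, with a remark that Route~2 reproduces the statement as written, is a legitimate improvement on the paper's exposition.
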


The proof of Lemma \ref{lem:diffusion-step} follows analogously to Theorem \ref{thm:gradient_estimate}, where 
we derived estimates for $f = \log\rho$ in the continuous setting (with $\rho$ satisfying $ 
f_t= \beta^{-1} f_{i i}+ \beta^{-1} f_i f_i$, i.e., $b^i = 0$, $c = 0$). We omit the details.

\begin{proof}[Proof of Proposition \ref{pro:pointwiseest}.]
Combining Lemma \ref{lem:advection} and Lemma \ref{lem:diffusion-step}, for each time step $\tau$, one can directly derive the desired bound for $v$.
\end{proof}

\subsubsection{Second order derivative estimates}

Now, we have proved that $v_n^{\xi}$ is bounded for $t\le T$ uniformly in $\xi$. Then, we can find $A>0$ such that the following $w_n^{\xi}$, 
in analogy with the continuous case, is well-defined
\[
w_n^{\xi} := \frac{1}{q(x)} \frac{|\nabla^2 f_n^{\xi}|^2}{A(1-f_n^{\xi}+\log M)^2 - |\nabla f_n^{\xi}|^2}, \quad f_n^{\xi}=\log \rho_n^{\xi}(x)
\]
and $q(x) = 1+|x|^{r}$ for some $r\ge 0$ to be chosen so that $w$ is bounded at $n=0$ and extra polynomial terms can be controlled.

\begin{proposition}[second order derivative estimate]\label{pro:second_order_main}
Under the assumptions of Proposition \ref{pro:pointwiseest}, there exists some polynomial $q(x)=1+|x|^r$ such that
	\[
	\sup_{n: n\tau \leq T} \sup_x w_n^{\xi}(x) \leq C(T),
	\]
    where $C(T)$ is independent of $\xi$.
\end{proposition}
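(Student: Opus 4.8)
The plan is to mirror the strategy used for the first-order estimate in Proposition~\ref{pro:pointwiseest}: analyze the advection and diffusion substeps separately, show that each propagates a bound on $w_n^\xi$ (up to a multiplicative factor $1+C\tau$ and an additive $C\tau$), and then iterate over the at most $\lceil T/\tau\rceil$ steps to get the uniform-in-$T$ bound. First I would fix $M=1$ and $\beta=1$ as in Theorem~\ref{thm:gradient_estimate}, and choose $A$ large enough that the denominator $A(1-f_n^\xi)^2-|\nabla f_n^\xi|^2$ is comparable to $(1-f_n^\xi)^2$ using the already-established bound on $v_n^\xi$ from Proposition~\ref{pro:pointwiseest}; this is the analogue of \eqref{eq:aux1}. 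The polynomial weight $q(x)=1+|x|^r$ is chosen with $r$ at least the exponent $q$ appearing in Assumption~\ref{ass:initial-density} so that $w_0^\xi$ is bounded, and possibly larger so that the commutator/error terms generated by the advection step (which involve polynomial-in-$|x|$ factors coming from the at-most-linear growth of $b$ and the estimates in Lemma~\ref{lem:flowmap}) can be absorbed.

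For the diffusion substep, I would invoke the calculation already done in the continuous setting: for $e^{\tau\cL_2}$ the log-density $f$ solves $f_t=\Delta f+|\nabla f|^2$ (the case $b^i=0$, $c=0$ of Theorem~\ref{thm:gradient_estimate}), so the Bernstein-type computation in Appendix~\ref{app:compute} applies verbatim and yields $\tilde{\cL}(\phi w)\ge q\phi w^2-Cw-C-1/q$ locally; the parabolic maximum principle over each time slab $[n\tau,(n+1)\tau]$ then gives $\sup_x \tilde w\le\max\{\sup_x w,\,A_1\}$ for some constant $A_1$ independent of $\xi,n,\tau$, exactly as in Lemma~\ref{lem:diffusion-step}. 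For the advection substep $e^{\tau\cL_1}$, $\tilde\rho(x)=\rho(\phi_{-\tau}(x))J(-\tau,x)$, so $\nabla^2\log\tilde\rho$ is computed by the chain rule and involves $\nabla^2 f$ contracted with $\nabla\phi_{-\tau}\otimes\nabla\phi_{-\tau}$, a term with $\nabla f$ contracted with $\nabla^2\phi_{-\tau}$, and the Hessian of $\log J(-\tau,x)$; using $\|\nabla\phi_{-\tau}-I\|\le C\tau$, $\|\nabla^2\phi_{-\tau}\|\le C\tau$, $\|\nabla^2\log J(-\tau,\cdot)\|\le C\tau$ (all consequences of Assumption~\ref{ass:drift} on the second and third derivatives of $b$ and Lemma~\ref{lem:flowmap}), together with the first-order bound $|\nabla f|\le C(1-f)\le C(1+|x|^{p_1})$ from Proposition~\ref{pro:pointwiseest}, one gets $\sup_x \tilde w\le(1+C\tau)\sup_x w+C\tau$, provided $r$ was chosen large enough to dominate the polynomial factor $|x|^{p_1}$ from the cross term. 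Composing the two substeps in either order gives $\sup_x w_{n+1}^\xi\le(1+C\tau)\max\{\sup_x w_n^\xi,A_1\}+C\tau$, and discrete Grönwall over $n\le T/\tau$ yields $\sup_{n\tau\le T}\sup_x w_n^\xi\le C(T)$ with $C(T)$ independent of $\xi$.

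The main obstacle I anticipate is the bookkeeping of polynomial weights in the advection step: the cross term $\nabla f\cdot\nabla^2\phi_{-\tau}$ in $\nabla^2\log\tilde\rho$ carries a factor that grows like $|x|^{p_1}$ (since $|\nabla f|$ does), and when squared and divided by the denominator $\sim(1-f)^2$ and by $q(x)$, one needs $q(x)=1+|x|^r$ with $r\ge 2p_1$ (or some explicit function of $p_1$ and the growth exponents) for this contribution to be $O(\tau)\cdot(1+w)$ rather than genuinely unbounded. I would handle this by first fixing all the exponents — running Theorem~\ref{thm:gradient_estimate} and Proposition~\ref{pro:pointwiseest} to extract concrete $p_1,p_2$, then the density bounds of Proposition~\ref{pro:densityestimate} to translate $(1-f)$ into $|x|$-powers — and only then choosing $r$ accordingly; care is also needed because $q$ appears in $w$ itself, so its derivatives $(\log q)_i$, $(\log q)_{ii}$ (bounded) and the term $b^i(\log q)_i$ (bounded by linear growth of $b$) enter the evolution, but these are exactly the benign terms already controlled in the proof of Theorem~\ref{thm:gradient_estimate}. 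The rest is routine once the weight is correctly calibrated.
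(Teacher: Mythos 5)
Your proposal follows essentially the same route as the paper: a one-step stability estimate $(1+C\tau)\sup_x w+C\tau$ for the advection substep via the chain rule and the flow-map bounds of Lemma~\ref{lem:flowmap}, a Bernstein/maximum-principle bound $\max\{\sup_x w,\,A_1\}$ for the diffusion substep, and iteration over $n\le T/\tau$. The only point worth noting is that the ``main obstacle'' you anticipate (the cross term $\tau\|\nabla f\|^2$ forcing $r\ge 2p_1$) is already neutralized by the deliberate choice of denominator $A(1-f+\log M)^2-|\nabla f|^2$ in the definition of $w$: in Lemma~\ref{lem:second_order_advection} that term is divided by $\max\{A/2,\tfrac12|\nabla f(\phi_{-\tau}(x))|^2\}$ and is $O(\tau)$ with no condition on $r$, so the weight $q$ is needed only to make $w_0$ finite under Assumption~\ref{ass:initial-density} and to absorb the polynomial remainders arising in the diffusion-step Bernstein computation of Appendix~\ref{app:compute}.
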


We first consider the effect of the advection step.
\begin{lemma}\label{lem:second_order_advection}
Let $\tilde{\rho}=e^{\tau \cL_{\xi(1)}}\rho$ where $\rho$ is any possible $\rho_n^{\xi}$ and $\tilde{w}$  be the $w$ corresponding to $\tilde{\rho}$. Then there exists $C>0$ independent of $\tau$ such for $\tau\le 1$ that
\begin{gather}
\sup_x \tilde{w} \leq (1 + C\tau)\sup_x w + C\tau.
\end{gather}
\end{lemma}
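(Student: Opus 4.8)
The plan is to imitate the proof of Lemma~\ref{lem:advection}, but now push the change-of-variables identity through the \emph{second} derivative, taking care that every amplification factor stays at the level $1+O(\tau)$. Write $y:=\phi_{-\tau}(x)$ and set $M=1$ without loss of generality, so that $\tilde\rho(x)=\rho(y)\,J(-\tau,x)$ and hence $\tilde f(x)=f(y)+\ell(x)$ with $\ell(x):=\log J(-\tau,x)$. First I would record the short-time ODE estimates, all uniform in $x$ for $\tau\le1$: from $\partial_t\nabla_x\phi_t=\nabla_x\phi_t\cdot\nabla b(\phi_t)$ and its $x$-derivative, together with the boundedness of $\nabla b$, $\nabla^2 b$, $\nabla^3 b$ from Assumption~\ref{ass:drift} and Gr\"onwall's inequality, one gets $|\nabla\phi_{-\tau}(x)-I|\le C\tau$ and $|\nabla^2\phi_{-\tau}(x)|\le C\tau$ (the latter because $\nabla^2_x\phi_0=0$); differentiating $\ell(x)=-\int_0^\tau(\nabla\cdot b)(\phi_{-s}(x))\,ds$ once and twice gives $|\ell(x)|+|\nabla\ell(x)|+|\nabla^2\ell(x)|\le C\tau$. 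Since $\rho\le M=1$ everywhere, $f\le0$, hence $1-f(y)\ge1$.

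Next I would apply the chain rule to $\tilde f=f\circ\phi_{-\tau}+\ell$ and feed in the estimates above to obtain the pointwise bounds
\[
|\nabla\tilde f(x)|\le(1+C\tau)|\nabla f(y)|+C\tau ,\qquad
|\nabla^2\tilde f(x)|\le(1+C\tau)|\nabla^2 f(y)|+C\tau|\nabla f(y)|+C\tau .
\]
Squaring and using Young's inequality to absorb the cross terms yields
$|\nabla^2\tilde f(x)|^2\le(1+C\tau)|\nabla^2 f(y)|^2+C\tau\big(|\nabla f(y)|^2+1\big)$ and, similarly, $|\nabla\tilde f(x)|^2\le(1+C\tau)|\nabla f(y)|^2+C\tau$.

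The decisive step is the estimate of the quotient, where the full denominator must be kept. Choosing $A$ large enough (by Proposition~\ref{pro:pointwiseest} and Lemma~\ref{lem:advection}, so that both $v$ and the first-order quantity $\tilde v$ of $\tilde\rho$ stay below $A/2$ on $[0,T]$), the quantity $D(z):=A(1-f(z))^2-|\nabla f(z)|^2$ satisfies $\tfrac{A}{2}(1-f(z))^2\le D(z)\le A(1-f(z))^2$ as in \eqref{eq:aux1}, so $D(z)$ is comparable to $(1-f(z))^2$ and, in particular, $1+|\nabla f(y)|^2\le C\,D(y)$. Using $\tilde f(x)=f(y)+\ell(x)$, $|\ell|\le C\tau$, $1-f(y)\ge1$ and the gradient bound above, one checks that $\tilde D(x):=A(1-\tilde f(x))^2-|\nabla\tilde f(x)|^2\ge(1-C\tau)D(y)$; for the numerator, inserting the identity $|\nabla^2 f(y)|^2=w(y)\,q(y)\,D(y)$ and absorbing $C\tau(|\nabla f(y)|^2+1)$ into $C\tau D(y)$ gives $|\nabla^2\tilde f(x)|^2\le\big[(1+C\tau)w(y)q(y)+C\tau\big]D(y)$. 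Dividing, $|\nabla^2\tilde f(x)|^2/\tilde D(x)\le(1+C\tau)w(y)q(y)+C\tau$. Finally the polynomial weight transfers with the correct constant: by the mean value theorem applied to $t\mapsto t^r$ (with $r\ge1$) and $|\phi_{-\tau}(x)-x|\le C\tau(1+|x|)$ from Lemma~\ref{lem:flowmap}, one has $\big||\phi_{-\tau}(x)|^r-|x|^r\big|\le C\tau\,q(x)$, so $q(y)\le(1+C\tau)q(x)$. Combining,
\[
\tilde w(x)=\frac{1}{q(x)}\,\frac{|\nabla^2\tilde f(x)|^2}{\tilde D(x)}
\le(1+C\tau)\,w(y)\,\frac{q(y)}{q(x)}+\frac{C\tau}{q(x)}
\le(1+C\tau)\sup_x w+C\tau ,
\]
and taking the supremum over $x$ finishes the proof.

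The main obstacle is exactly this bookkeeping. One must resist the temptation to replace $D(y)$ by $A(1-f(y))^2$ when comparing numerator and denominator — that substitution inflates the leading coefficient by a factor $\ge 2$ and destroys the $(1+C\tau)$ form. Instead $|\nabla^2\tilde f(x)|^2$ must be paired against the \emph{full} $\tilde D(x)$, lower-bounded by $(1-C\tau)D(y)$, and the error terms must be routed into $C\tau D(y)$ via the comparability $D\asymp(1-f)^2\gtrsim1+|\nabla f|^2$; likewise the chain-rule bound must be stated with the \emph{matching} weight $q(\phi_{-\tau}(x))$ rather than $q(x)$, so that every genuinely new quantity enters additively at size $O(\tau)$ and every amplification is multiplicative of size $1+O(\tau)$. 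The diffusion-step analogue then reduces, as noted after Lemma~\ref{lem:diffusion-step}, to the parabolic maximum-principle argument of Theorem~\ref{thm:gradient_estimate} with $b\equiv0$, $c\equiv0$.
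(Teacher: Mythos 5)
Your proposal is correct and follows essentially the same route as the paper's proof: chain rule through $\tilde f=f\circ\phi_{-\tau}+\log J(-\tau,\cdot)$ with the short-time flow bounds $|\nabla\phi_{-\tau}-I|+\|\nabla^2\phi_{-\tau}\|+|\nabla^2\log J|\le C\tau$, Young's inequality on the Hessian, the lower bound $\tilde D(x)\ge(1-C\tau)D(\phi_{-\tau}(x))$ using the comparability $D\asymp(1-f)^2$ guaranteed by the first-order bound, and the weight transfer $q(\phi_{-\tau}(x))/q(x)\le 1+C\tau$. The only cosmetic difference is that you absorb the error terms $C\tau(|\nabla f|^2+1)$ into $C\tau D(y)$, whereas the paper divides them by $\max\{A/2,\tfrac12|\nabla f|^2\}$ — these are the same estimate.
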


\begin{proof}
First, one has
\[
\tilde{f}=\tilde{f}(\phi_{-\tau}(x))+\log J(-\tau, x).
\]
	
It is clear that
\[
\nabla^2\tilde{f}=\nabla\phi_{-\tau}(x)^{\otimes 2}:\nabla^2 f(\phi_{-\tau}(x))
+\nabla^2\phi_{-\tau}(x)\cdot\nabla f(\phi_{-\tau}(x))+\nabla^2J(-\tau, x).
\]
Here 
\[
(\nabla\phi_{-\tau}(x)^{\otimes 2}:\nabla^2 f(\phi_{-\tau}(x)))_{ij}
:=\partial_i\phi_m \partial_j\phi_n \partial_{mn}f.
\]
By Lemma \ref{lem:flowmap}, the flow $\phi_{-\tau}$ satisfies the estimates
\[
|\nabla\phi_{-\tau}(x)-I|\le C\tau, 
\qquad \|\nabla^2\phi_{-\tau}\|+|\nabla^2J(-\tau, x)|\le C\tau.
\]
Again, we can rescale all $\rho_n^{\xi}$ by $M$ so that we can set $M=1$
and similarly, $1-f(y)\ge 1$ and $1-f(\phi_{-\tau}(x))-\log J(-\tau, x)\ge 1$.

By Young's inequality, one then has for $\tau\le 1$ that
\begin{equation}\label{eq:hessian_bound}
    \|\nabla^2\tilde{f}\|^2\le (1+C\tau)\|\nabla^2 f(\phi_{-\tau}(x))\|^2
    +C\tau \|\nabla f(\phi_{-\tau}(x))\|^2+C\tau.
\end{equation}

For the denominator $D(\tilde{f}) := A(1-\tilde{f})^2 - |\nabla\tilde{f}|^2$, one has
\begin{multline*}
D(\tilde{f}) =A(1-\tilde{f}(\phi_{-\tau}(x))-\log J(-\tau, x))^2
-|\nabla\phi_{-\tau}(x)\cdot \nabla f(\phi_{-\tau}(x))+\nabla\log J(-\tau, x)|^2\\
\ge 
A(1-\tilde{f}(\phi_{-\tau}(x)))^2-|\nabla f(\phi_{-\tau}(x))|^2
-C(1-\tilde{f})\tau-C\tau|\nabla f|^2-C\tau|\nabla f|.
\end{multline*}
As we have constructed, by the boundedness of $v$,
\[
A(1-\tilde{f}(\phi_{-\tau}(x)))^2-|\nabla f(\phi_{-\tau}(x))|^2\ge
\frac{A}{2}(1-\tilde{f}(\phi_{-\tau}(x)))^2\ge \max\{\frac{A}{2},
\frac{1}{2}|\nabla f(\phi_{-\tau}(x))|^2 \}.
\]
It is straightforward to see that
\[
D(\tilde{f})\ge \left( A(1-\tilde{f}(\phi_{-\tau}(x)))^2-|\nabla f(\phi_{-\tau}(x))|^2 \right) (1-C\tau).
\]
Hence, one has
\[
\tilde{w}(x)\le \frac{q(\phi_{-\tau}(x))}{q(x)} w(\phi_{-\tau}(x))(1+C\tau)
+\frac{1}{q(x)}\frac{C\tau |\nabla f(\phi_{-\tau}(x))|^2+C\tau}{\max\{\frac{A}{2}, \frac{1}{2}|\nabla f(\phi_{-\tau}(x))|^2 \}}
\]
Next, by Lemma \ref{lem:flowmap}, one finds that
\[
\frac{q(\phi_{-\tau}(x))}{q(x)} \le 1+C\tau,
\]
for $\tau\le 1$ where $C$ depends on $r$. The second term is clearly bounded by $C\tau$.
\end{proof}

Next, we consider the diffusion step. 
\begin{lemma}\label{lem:second_order_iterative}
Let $\tilde{\rho}=e^{\tau \cL_{\xi(2)}}\rho$ and $\tilde{w}$ be the $w$ corresponding to $\tilde{\rho}$. Then, there is some polynomial $q(x)=1+|x|^r$ so that for a constant $A_2$ independent of $\xi, n,\tau$ and the initial value,  it holds for all possible $\rho=\rho_n^{\xi}$ that
\[
\sup_{x} \tilde{w} \leq \max\left\{\sup_x w, A_2\right\}.
\]
\end{lemma}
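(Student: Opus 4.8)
\textbf{Proof plan for Lemma \ref{lem:second_order_iterative}.}
The plan is to mimic the Bernstein-type argument used in the proof of Theorem \ref{thm:gradient_estimate} (second order part) and in the analogous continuous estimate, but now applied to the pure heat flow $\partial_s\varrho=\beta^{-1}\Delta\varrho$ over the short interval $s\in[0,\tau]$, with $\varrho_0=\rho$ and $\varrho_\tau=\tilde\rho$. Writing $f=\log\varrho$, the equation is $f_s=\beta^{-1}\Delta f+\beta^{-1}|\nabla f|^2$, i.e. the case $b\equiv 0$, $c\equiv 0$ of the PDE already analyzed. First I would recall that along this flow the quantities $v=|\nabla f|^2/(1-f+\log M)^2$ and the auxiliary $u=|\nabla^2f|^2/(A(1-f+\log M)^2-|\nabla f|^2)$ obey the same differential inequalities derived in Appendix \ref{app:compute}, now with the remainder terms $R_1,R_2$ vanishing (since $b,c$ and their derivatives are absent); in particular one gets $\tilde{\mathcal L}u\ge \tfrac32 u^2 - Cq(x)$ for the transported operator, with $C$ depending only on $\beta$, $d$ and the choice of $q$.

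Next I would set up $w=u/q(x)$ with $q(x)=1+|x|^r$, choosing $r$ large enough that $w$ is finite at $s=0$ (this uses the bound on $\|\nabla^2\log\rho_0\|$, hence on $\|\nabla^2 f\|$ for $\rho=\rho_n^\xi$, from Assumption \ref{ass:initial-density} propagated via Proposition \ref{pro:second_order_main}'s inductive hypothesis — more precisely, at the start of each diffusion substep $w$ is controlled by the inductive bound) and that the extra polynomial terms generated when passing from $u$ to $w$ (terms like $(\log q)_i^2 w$, $(\log q)_{ii}w$, and the cross term $4\frac{A(1-f)f_i+f_kf_{ki}}{A(1-f)^2-f_jf_j}w(\log q)_i$) are absorbed using $(\log q)_i,(\log q)_{ii}$ bounded and \eqref{auxeq:transcoefficient}. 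The resulting inequality has the schematic form
\[
\mathcal L w + (\text{drift terms}) \ge \Big(\tfrac32-\epsilon\Big)q(x)\,w^2 - C - Cw - \frac{1}{q(x)},
\]
exactly as in the continuous proof. Then I would localize with the cutoff $\phi=\psi(|x-x_*|)$, apply the parabolic maximum principle on $[0,\tau]\times B(x_*,2(|x_*|+1))$: either the max of $\phi w$ is attained at the initial slice $s=0$, giving $w(\tau,x_*)\le\sup_{B}w(0,\cdot)$, or it is attained at an interior point where $(\phi w)_i=0$ and $\partial_s(\phi w)\ge 0$, forcing $q(x)w^2\lesssim 1+w$, hence $w\le A_2$ for an absolute constant $A_2$. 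Taking the supremum over $x_*$ yields $\sup_x\tilde w\le\max\{\sup_x w, A_2\}$ with $A_2$ independent of $\xi,n,\tau$ and the initial data.

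The main obstacle I anticipate is purely bookkeeping rather than conceptual: one must verify that the polynomial weight $q$ chosen here is compatible with the $q$ already fixed in the companion advection Lemma \ref{lem:second_order_advection} and with the initial regularity, and that the constant in $\tilde{\mathcal L}u\ge\tfrac32u^2-Cq$ does not degrade as $\tau\to 0$ — this is fine because the estimate is on the flow itself, not on a discretization, so all constants are the genuine PDE constants. A secondary subtlety is that the maximum-principle dichotomy must be run on the short interval $[0,\tau]$ with a cutoff whose support does not shrink with $\tau$; since $\psi$ and its derivatives are fixed and the inequality is autonomous in $s$, the same argument as for $v$ in Theorem \ref{thm:gradient_estimate} applies verbatim, and the ``$\max$'' form of the conclusion is exactly what propagates cleanly through the induction in Proposition \ref{pro:second_order_main}. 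I would therefore state that the proof is identical to the second-order part of Theorem \ref{thm:gradient_estimate} with $b=c=0$, and omit the repeated computation.
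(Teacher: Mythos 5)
Your proposal is correct and coincides with the paper's own treatment: the paper simply states that the proof "follows analogously to Theorem \ref{thm:gradient_estimate}" applied to the pure heat flow (the case $b\equiv 0$, $c\equiv 0$, so $R_1=R_2=0$), which is precisely the Bernstein argument with cutoff and parabolic maximum principle that you describe. The dichotomy in the maximum principle (initial slice versus interior critical point) is exactly what produces the $\max\{\sup_x w, A_2\}$ form of the conclusion, with $A_2$ depending only on the PDE constants.
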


The proof of Lemma \ref{lem:second_order_iterative} follows analogously to Theorem \ref{thm:gradient_estimate}.
\begin{proof}[Proof of Proposition \ref{pro:second_order_main}]
This is a direct consequence of Lemma \ref{lem:second_order_advection} and Lemma \ref{lem:second_order_iterative}.
\end{proof}

\subsection{The polynomial bounds of the derivatives for the logarithmic density}\label{subsec:boundsforrhobar}

Based on the derivative bounds for $\rho_n^{\xi}$ established above, 
we can extend these estimates to the averaged density $\bar\rho(t)$.

\begin{theorem}\label{thm:deriva_density}
Consider the averaged interpolated density $\bar{\rho}(t)$ for the random splitting Langevin Monte Carlo defined in \eqref{eq:barrhot}.  
Suppose both Assumptions \ref{ass:drift} for the coefficients and \ref{ass:initial-density} for the initial density hold. Then, for any $T>0$, there exist some $p>0$ and $C(T)>0$ satisfying
\begin{equation}
\sup_{t\le T}\left( |\log \bar{\rho}(t,x)|+|\nabla \log \bar\rho(t,x)|+
|\nabla^2 \log \bar\rho(t,x)|\right) \;\le\; C(T)\,(1+|x|)^p.
\end{equation}
\end{theorem}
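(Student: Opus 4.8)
The strategy is to transfer the pointwise bounds already proved for the fixed-$\xi$ densities $\rho_n^\xi$ to the averaged, time-interpolated density $\bar\rho(t)$, exploiting that averaging a family of densities \emph{improves} (or at least does not worsen) the log-density derivative bounds in a quantitative, polynomially-controlled way. First I would recall what is available: Proposition \ref{pro:densityestimate} gives $A_3\exp(-\alpha_1|x|^p)\le\rho_n^\xi(x)\le A_4\exp(-\alpha_2|x|^2)$ with constants uniform in $\xi$ and $\tau$ (up to horizon $T$); Proposition \ref{pro:pointwiseest} gives $\sup_x v_n^\xi\le C(T)$, i.e. $|\nabla\log\rho_n^\xi|\le C(T)(1-\log\rho_n^\xi+\log M)$; and Proposition \ref{pro:second_order_main} gives $\sup_x w_n^\xi\le C(T)$, i.e. $\|\nabla^2\log\rho_n^\xi\|\le C(T)\sqrt{q(x)}\,(1-\log\rho_n^\xi+\log M)$. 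Combining these with the density bounds, $|\log\rho_n^\xi|$, $|\nabla\log\rho_n^\xi|$ and $\|\nabla^2\log\rho_n^\xi\|$ are all bounded by $C(T)(1+|x|)^{p_*}$ for a common exponent $p_*$, uniformly in $\xi$ and in $n$ with $n\tau\le T$.

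Next I would propagate these within a time step via the semigroup formula \eqref{eq:timeinterpolation}: for $t\in[n\tau,(n+1)\tau]$, $\rho^{\xi_n}(t)=e^{(t-n\tau)\cL_{\xi(1)}}e^{(t-n\tau)\cL_{\xi(2)}}\bar\rho_n$. The same one-step stability lemmas used for the discrete iterates — Lemma \ref{lem:flowmap}, Lemma \ref{lem:diffu-step}, Lemma \ref{lem:advection}, Lemma \ref{lem:diffusion-step}, Lemma \ref{lem:second_order_advection}, Lemma \ref{lem:second_order_iterative} — apply verbatim with $\tau$ replaced by $s=t-n\tau\le\tau\le1$, so the Gaussian-type bounds and the bounds on $v,w$ persist for all intermediate times $t$, with constants still of the form $C(T)(1+|x|)^{p_*}$. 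The point is that these estimates hold for \emph{each} realization $\rho^\xi(t)$ of the interpolated density with constants independent of $\xi$.

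The final, and genuinely delicate, step is to pass from the per-$\xi$ bounds to bounds for $\bar\rho(t)=\E_\xi\rho^\xi(t)$. Upper and lower bounds on $\bar\rho$ itself are immediate by averaging the two-sided density bounds, so $|\log\bar\rho(t,x)|\le C(T)(1+|x|)^{p}$. For the derivatives I would write $\nabla\log\bar\rho=\E_\xi[\rho^\xi\nabla\log\rho^\xi]/\E_\xi[\rho^\xi]$ and similarly $\nabla^2\log\bar\rho=\E_\xi[\rho^\xi(\nabla^2\log\rho^\xi+\nabla\log\rho^\xi\otimes\nabla\log\rho^\xi)]/\E_\xi[\rho^\xi]-\nabla\log\bar\rho\otimes\nabla\log\bar\rho$; since each $\rho^\xi(t,x)$ is comparable to $\bar\rho(t,x)$ up to a factor $C(T)\exp(C(T)(1+|x|)^{p})$ (from the two-sided pointwise bounds, using $|x|^2$ vs.\ $|x|^p$ with $p\ge2$), the weights $\rho^\xi/\E_\xi\rho^\xi$ are bounded, and plugging in the per-$\xi$ derivative bounds yields the claimed polynomial bound for $\nabla\log\bar\rho$ and $\nabla^2\log\bar\rho$. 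The main obstacle is exactly the control of these mixing ratios $\rho^\xi/\bar\rho$: because the upper bound is Gaussian ($e^{-\alpha_2|x|^2}$) while the lower bound is only $e^{-\alpha_1|x|^p}$ with possibly $p>2$, the ratio is not bounded uniformly in $x$ but only up to $\exp(C(1+|x|)^{p})$, so some care is needed to ensure the resulting exponent $p$ in the final estimate is finite and that the quadratic term $\nabla\log\rho^\xi\otimes\nabla\log\rho^\xi$ in the Hessian identity — which is genuinely present and does not cancel before averaging — is absorbed into $C(T)(1+|x|)^{p}$ using the first-order bound; I would handle this by tracking the worst-case exponent through each inequality rather than optimizing constants.
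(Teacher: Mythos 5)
Your first two steps (combining Propositions \ref{pro:densityestimate}, \ref{pro:pointwiseest}, \ref{pro:second_order_main} with the density bounds to get uniform-in-$\xi$ polynomial bounds, and extending them to intermediate times via the one-step stability lemmas) match the paper exactly. The gap is in your final averaging step. You propose to control $\nabla\log\bar\rho=\E_\xi[\rho^\xi\nabla\log\rho^\xi]/\E_\xi[\rho^\xi]$ by bounding the individual ratios $\rho^\xi/\bar\rho$, and you correctly observe that the only available bound is $\rho^\xi/\bar\rho\le C\exp(\alpha_1|x|^p)$ (Gaussian upper bound against $e^{-\alpha_1|x|^p}$ lower bound). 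Your proposed remedy --- ``tracking the worst-case exponent through each inequality'' --- cannot work: an exponential factor $\exp(C(1+|x|)^p)$ is not absorbable into any polynomial $(1+|x|)^{p'}$, so following your plan literally yields an exponentially growing bound on $|\nabla\log\bar\rho|$, not the polynomial one claimed in the theorem. The sentence ``the weights $\rho^\xi/\E_\xi\rho^\xi$ are bounded'' is simply false pointwise, and the argument collapses at that point.

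The repair is that no pointwise control of the individual weights is needed at all. Since the weights $\rho^\xi/\E_\xi\rho^\xi$ are nonnegative and average to $1$ over $\xi$, and since $|\nabla\log\rho^\xi(t,x)|\le G(x):=C(T)(1+|x|)^{p_*}$ \emph{uniformly in} $\xi$, one has directly
\[
|\nabla\log\bar\rho|\;=\;\frac{|\E_\xi[\rho^\xi\nabla\log\rho^\xi]|}{\E_\xi[\rho^\xi]}\;\le\;\frac{\E_\xi[\rho^\xi\,G]}{\E_\xi[\rho^\xi]}\;=\;G(x),
\]
and the same weighted-average observation handles $\nabla^2\bar\rho/\bar\rho=\E_\xi[\rho^\xi(\nabla^2\log\rho^\xi+\nabla\log\rho^\xi\otimes\nabla\log\rho^\xi)]/\E_\xi[\rho^\xi]$, after which subtracting $\nabla\log\bar\rho\otimes\nabla\log\bar\rho$ (already polynomially bounded) gives the Hessian estimate. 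The paper packages this slightly differently, via Jensen's inequality applied to the jointly convex perspective function $(\rho,m)\mapsto |m|^q/\rho^{q-1}$, which gives $|\nabla\bar\rho|^q/\bar\rho^{q-1}\le\E_\xi[|\nabla\rho^\xi|^q/(\rho^\xi)^{q-1}]$ and hence the same conclusion; for the purposes of this theorem the two are equivalent (the case $q=1$ is exactly the triangle-inequality argument above). Your identity for $\nabla^2\log\bar\rho$ and your remark that the quadratic term $\nabla\log\rho^\xi\otimes\nabla\log\rho^\xi$ does not cancel are both correct; it is only the mechanism for passing the bound through the average that needs to be replaced.
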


\begin{proof}
First, we remark that the results in Proposition \ref{pro:densityestimate}, Proposition \ref{pro:pointwiseest} and 
Proposition \ref{pro:second_order_main} can be extended to $\rho^{\xi}(t)$ defined in \eqref{eq:timeinterpolation0}. In fact, this can be understood that the last step is performed with step size $t-t_n$ and the proof has no difference.

First, by Proposition \ref{pro:densityestimate}, $\bar{\rho}(t)$
has the same upper and lower bound and thus the claim for $\log\bar{\rho}$ holds.

Moreover, by the boundedness of $v^{\xi}(t)$ in Proposition \ref{pro:pointwiseest},
\[
|\nabla\log \rho^{\xi}(t)|\le C(1-\log\rho^{\xi}(t)+\log M)
\le C(1+|x|^p).
\]
Since the mapping
$(\rho, m) \mapsto m^q/\rho^{q-1}$ for $q\ge 1$ is convex, one thus has by Jensen's inequality that
\[
\bar{\rho}(t) |\nabla \log \bar{\rho}(t)|^q
\le \E_{\xi}[\rho^{\xi}(t)|\nabla\log \rho^{\xi}(t)|^q]
\le \E_{\xi}[\rho^{\xi}(t)(C(1+|x|^p))^q]=\bar{\rho}(t)(C(1+|x|^p))^q.
\]
This gives the result for $|\nabla \log \bar{\rho}(t)|$.

Lastly, by the same argument,
\[
\frac{|\nabla^2\bar{\rho}(t)|^q}{(\bar{\rho}(t))^{q-1}}
\le \E_{\xi}\left[\frac{|\nabla^2 \rho^{\xi}(t)|^q}{(\rho^{\xi}(t))^{q-1}}\right]
=\E_{\xi}[\rho^{\xi}(t)|\nabla^2\log\rho^{\xi}(t)
+\nabla\log\rho^{\xi}\otimes \nabla\log\rho^{\xi}|^q].
\]
Hence, one has
\begin{multline*}
\bar{\rho}(t)|\nabla^2\log\bar{\rho}(t)|^q
\le \E_{\xi}[\rho^{\xi}(t)|\nabla^2\log\rho^{\xi}(t)
+\nabla\log\rho^{\xi}\otimes \nabla\log\rho^{\xi}|^q]
-\bar{\rho}(t)|\nabla\log\bar{\rho}(t)|^{2q}.
\end{multline*}
Then, by the estimate of $w^{\xi}$ in Proposition \ref{pro:second_order_main}, one finds easily that 
$|\nabla^2\log\rho^{\xi}(t)|$ is bounded by a polynomial uniformly in $\xi$ as well. Hence, by the same argument, the bounds 
on $\nabla^2\log\bar{\rho}$ follows.
\end{proof}

\section{The sampling error estimates}\label{sec:sampling-error}

In this section we address the long-time behavior of the splitting scheme. 
While the previous section established local error estimates on finite time intervals (Theorem \ref{thm:uniform_error}), 
our goal here is to derive global error bounds that remain uniform in time. 
The key observation is that the splitting scheme itself admits a unique invariant distribution and is ergodic with exponential convergence. 
By combining this ergodicity property with the finite-time local error analysis, 
we obtain a sharp estimate of the distance between the invariant distribution of the numerical scheme and the invariant distribution of the continuous dynamics.

Let $\cS_n^{\tau}$ be the evolutional operator for the laws of the random splitting LMC, i.e.,
\[
\cS_n^{\tau}\rho_0:=\mathrm{Law}(X_n),~ \text{if}~ \rho_0=\mathrm{Law}(X_0).
\]
Clearly, $\cS_n^{\tau}$ is a linear operator. Moreover, by the Markov property, 
\begin{gather}
\cS_n^{\tau}=(\cS^{\tau})^n,
\end{gather}
so that $\{\cS_n^{\tau}\}$ forms a Markov semigroup.

\begin{theorem}[Ergodicity of the splitting scheme]
\label{thm:ergodicity}
Under Assumptions \ref{ass:drift} and \ref{ass:initial-density}, 
the Markov semigroup $S^\tau$ associated with the random splitting LMC admits a unique invariant distribution $\bar{\rho}_*^\tau$. 
Moreover, there exist constants $C,\lambda>0$, independent of $\tau$, such that for any probability measure $\mu$,
\begin{equation}
    W_1\!\big((S^\tau)^n \mu,\, \bar{\rho}_*^\tau \big) \;\leq\; C e^{-\lambda n\tau}\, W_1(\mu,\, \bar{\rho}_*^\tau),
\end{equation}
for all integers $n\geq 0$.
\end{theorem}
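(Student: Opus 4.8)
The plan is to deduce Theorem~\ref{thm:ergodicity} from the one-step contraction already established in Theorem~\ref{thm:ergodicity0}. First I would observe that Theorem~\ref{thm:ergodicity0} gives, for any two initial laws $\mu_0,\nu_0$ and the corresponding laws $\mu_n,\nu_n$ of the random splitting LMC, the estimate $W_1(\mu_n,\nu_n)\le Ce^{-\lambda n\tau}W_1(\mu_0,\nu_0)$ with $C,\lambda>0$ independent of $\tau$ and $n$. Equivalently, in operator notation, $W_1((S^\tau)^n\mu,(S^\tau)^n\nu)\le Ce^{-\lambda n\tau}W_1(\mu,\nu)$. This is exactly the geometric contraction we need; the remaining work is purely the standard Banach-type fixed point argument on the Wasserstein space.

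Next I would establish existence of an invariant measure. The natural candidate is the limit of the iterates $(S^\tau)^n\mu$ for a fixed reference measure $\mu$ with finite first moment. From the contraction, for $m>n$,
\[
W_1\big((S^\tau)^m\mu,(S^\tau)^n\mu\big)\le Ce^{-\lambda n\tau}W_1\big((S^\tau)^{m-n}\mu,\mu\big),
\]
so it suffices to bound $W_1((S^\tau)^k\mu,\mu)$ uniformly in $k$; this follows from the uniform-in-time moment bound of Lemma~\ref{lem:moment_hybrid_refined}(ii) (with $p=1$), which gives $\sup_k \int |x|\,(S^\tau)^k\mu(dx)<\infty$, hence $\sup_k W_1((S^\tau)^k\mu,\mu)<\infty$. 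Therefore $\{(S^\tau)^n\mu\}$ is Cauchy in $(\mathcal{P}_1(\mathbb{R}^d),W_1)$, which is complete, so it converges to some $\bar\rho_*^\tau\in\mathcal{P}_1(\mathbb{R}^d)$. Applying $S^\tau$ and using that $S^\tau$ is $W_1$-continuous (again a consequence of the one-step case $n=1$ of Theorem~\ref{thm:ergodicity0}), one gets $S^\tau\bar\rho_*^\tau=\lim_n (S^\tau)^{n+1}\mu=\bar\rho_*^\tau$, so $\bar\rho_*^\tau$ is invariant. Uniqueness is immediate: if $\bar\rho_*^\tau$ and $\tilde\rho_*^\tau$ were both invariant, then $W_1(\bar\rho_*^\tau,\tilde\rho_*^\tau)=W_1((S^\tau)^n\bar\rho_*^\tau,(S^\tau)^n\tilde\rho_*^\tau)\le Ce^{-\lambda n\tau}W_1(\bar\rho_*^\tau,\tilde\rho_*^\tau)\to 0$.

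Finally, the convergence rate claimed in the theorem is obtained by taking $\nu=\bar\rho_*^\tau$ in the contraction estimate: since $(S^\tau)^n\bar\rho_*^\tau=\bar\rho_*^\tau$,
\[
W_1\big((S^\tau)^n\mu,\bar\rho_*^\tau\big)=W_1\big((S^\tau)^n\mu,(S^\tau)^n\bar\rho_*^\tau\big)\le Ce^{-\lambda n\tau}W_1(\mu,\bar\rho_*^\tau),
\]
which is precisely the assertion. Strictly speaking, to make $W_1(\mu,\bar\rho_*^\tau)$ finite one restricts to $\mu\in\mathcal{P}_1(\mathbb{R}^d)$; for general $\mu$ the statement is vacuous or one uses the moment bound to reduce to the finite-moment case.

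I do not expect a genuine obstacle here, since Theorem~\ref{thm:ergodicity0} already contains all the analytic content. The only point requiring a little care is the uniform-in-$k$ bound on $W_1((S^\tau)^k\mu,\mu)$ needed for the Cauchy argument; this is where Lemma~\ref{lem:moment_hybrid_refined}(ii) is invoked, and one should check that the stepsize restriction $\tau\le\tau_0$ there is compatible with the one in Theorem~\ref{thm:ergodicity0} (taking the smaller of the two thresholds). Everything else is the routine Banach fixed point scheme transported to the complete metric space $(\mathcal{P}_1(\mathbb{R}^d),W_1)$.
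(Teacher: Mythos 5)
Your proposal is correct and follows essentially the same route as the paper: both deduce the theorem from the contraction in Theorem~\ref{thm:ergodicity0} via a Banach-type fixed point argument and then obtain the rate by taking $\nu=\bar\rho_*^\tau$. The only cosmetic difference is that you realize existence as the $W_1$-limit of the Cauchy sequence $(S^\tau)^n\mu$ (using the moment bound of Lemma~\ref{lem:moment_hybrid_refined} for finiteness), while the paper applies the contraction principle to the iterate $(\cS^\tau)^{n_0}$ and transfers the fixed point to $\cS^\tau$ by a Ces\`aro average; your version is, if anything, slightly more explicit about completeness and finite first moments.
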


\begin{proof}

By Theorem \ref{thm:ergodicity0}, one finds that there exists some $n_0>0$ such that $(\cS^{\tau})^{n_0}$ is a contraction under the $W^1$ metric. By the contraction principle, $(\cS^{\tau})^{n_0}$ has a unique invariant measure $\bar{\rho}_*^{\tau}$. Then, it can be verified directly that
\[
\tilde{\pi}^{\tau}:=\frac{1}{n_0}\sum_{i=0}^{n_0-1}
(\cS^{\tau})^{i}\bar{\rho}_*^{\tau}
\]
is an invariant measure of $\cS^{\tau}$. 
Moreover, any invariant measure of $\cS^{\tau}$ is also an invariant measure of $(\cS^{\tau})^{n_0}$ and thus must be $\bar{\rho}_*^{\tau}$ by the uniqueness of the invariant measure of $(\cS^{\tau})^{n_0}$, concluding the first claim.

The second claim follows from Theorem~\ref{thm:ergodicity0} directly by setting $\nu=\bar{\rho}_*^{\tau}$.
\end{proof}

We now combine the finite-time local error estimate (Theorem \ref{thm:uniform_error}) with the ergodicity result (Theorem \ref{thm:ergodicity}). 
This yields a global error estimate for the random splitting LMC uniformly in time, thus yielding the error for the invariant measure. This gives the support for sampling using the random splitting LMC.

\begin{theorem}
\label{thm:global_error}
Let $\rho(t) = \cS(t)\rho_0$ denote the solution of the Fokker--Planck equation with initial distribution $\rho_0$, and let $\bar{\rho}_n = (\cS^\tau)^n \rho_0$ be the law generated by the splitting scheme.  Then there exists a constant $C>0$, independent of $n$ and $\tau$, such that the following hold:
\begin{equation}\label{eq:uniform-in-time}
    W_1\!\big(\cS(\tau)^n \rho_0,\,(\cS^\tau)^n \rho_0\big) \;\le\; C \tau^2,
    \qquad \text{for all } n\ge 0.
\end{equation}
Moreover, the invariant measure $\bar\rho_*^\tau$ satisfies
\begin{equation}\label{eq:invariant-measure-error}
    W_1(\bar{\rho}_*^\tau, \rho_\ast) \;\le\; C \tau^2,
\end{equation}
where $\rho_\ast$ is the unique probability invariant measure of the continuous dynamics.
\end{theorem}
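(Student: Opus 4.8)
The plan is to upgrade the finite-horizon relative-entropy estimate of Theorem~\ref{thm:uniform_error} to a uniform-in-time Wasserstein bound by a ``restart-and-contract'' argument, and then to pass to the limit to obtain the invariant-measure error. The key device — and the point that needs care — is that when we restart the local error analysis on a new time window we must use the \emph{continuous-time} density $\rho(m\tau,\cdot)$, not the numerical law $\bar\rho_m$, as the new initial datum: by Proposition~\ref{pro:density-SDE}, Theorem~\ref{thm:gradient_estimate} and Lemma~\ref{lem:moment_hybrid_refined}(i) the densities $\rho(m\tau,\cdot)$ satisfy Assumption~\ref{ass:initial-density} and the moment bounds with constants \emph{independent of $m$}, whereas the constants in Proposition~\ref{pro:densityestimate} for $\bar\rho_m$ degrade with the horizon. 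By the remark following Proposition~\ref{pro:deri_estimate}, the constant $C(T)$ in Theorem~\ref{thm:uniform_error} depends only on these Assumption constants (and on $T$), so applying Theorem~\ref{thm:uniform_error} with $\rho(m\tau,\cdot)$ in place of $\rho_0$, together with the uniform Talagrand--$T_1$ inequality for the Fokker--Planck densities, yields a constant $C(T)$ independent of $m$ with
\[
W_1\bigl(\rho((m+k)\tau,\cdot),\,(\cS^\tau)^{k}\rho(m\tau,\cdot)\bigr)\le C(T)\,\tau^2,\qquad 0\le k\tau\le T .
\]

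Next I would fix $T_0$ so large that $Ce^{-\lambda T_0}\le \tfrac12$, where $C,\lambda$ are the contraction constants of Theorem~\ref{thm:ergodicity0}, set $n_1:=\lceil T_0/\tau\rceil$ and $C_0:=C(n_1\tau)$ from the display above (finite, since $n_1\tau<T_0+\tau$), and write $e_n:=W_1(\cS(\tau)^n\rho_0,(\cS^\tau)^n\rho_0)=W_1(\cS(n\tau)\rho_0,(\cS^\tau)^n\rho_0)$. For $n<n_1$, Theorem~\ref{thm:uniform_error} gives $e_n\le C_0\tau^2$ directly. For $n\ge n_1$, inserting the intermediate measure $(\cS^\tau)^{n_1}\rho((n-n_1)\tau,\cdot)$,
\begin{multline*}
e_n\le W_1\bigl(\rho(n\tau,\cdot),\,(\cS^\tau)^{n_1}\rho((n-n_1)\tau,\cdot)\bigr)\\
+\;W_1\bigl((\cS^\tau)^{n_1}\rho((n-n_1)\tau,\cdot),\,(\cS^\tau)^{n_1}(\cS^\tau)^{n-n_1}\rho_0\bigr).
\end{multline*}
The first term is $\le C_0\tau^2$ by the displayed estimate with $m=n-n_1$, $k=n_1$; the second is $\le Ce^{-\lambda n_1\tau}\,W_1\bigl(\rho((n-n_1)\tau,\cdot),(\cS^\tau)^{n-n_1}\rho_0\bigr)=Ce^{-\lambda n_1\tau}e_{n-n_1}\le\tfrac12 e_{n-n_1}$ by the $W_1$-contraction of the scheme (Theorem~\ref{thm:ergodicity0}). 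Hence $e_n\le C_0\tau^2+\tfrac12 e_{n-n_1}$ for every $n\ge n_1$, and iterating this down to an index $r<n_1$ (where $e_r\le C_0\tau^2$) gives $e_n\le 2C_0\tau^2+C_0\tau^2=3C_0\tau^2$ for all $n\ge0$, which is precisely \eqref{eq:uniform-in-time}.

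Finally, \eqref{eq:invariant-measure-error} follows by letting $n\to\infty$ in
\[
W_1(\bar{\rho}_*^\tau,\rho_\ast)\le W_1\bigl(\bar{\rho}_*^\tau,(\cS^\tau)^n\rho_0\bigr)+W_1\bigl((\cS^\tau)^n\rho_0,\cS(n\tau)\rho_0\bigr)+W_1\bigl(\cS(n\tau)\rho_0,\rho_\ast\bigr),
\]
since the middle term is $\le 3C_0\tau^2$ by \eqref{eq:uniform-in-time}, the first tends to $0$ by Theorem~\ref{thm:ergodicity}, and the last tends to $0$ by the geometric ergodicity of the continuous dynamics (reflection coupling for \eqref{eq:SDE}, cf.\ \cite{eberle2016reflection,li2024geometric}); one needs $W_1(\bar{\rho}_*^\tau,\rho_\ast)<\infty$, which holds because $\rho_\ast$ has a sub-Gaussian tail and $\bar{\rho}_*^\tau$ has finite first moment, obtained by passing to the limit in Lemma~\ref{lem:moment_hybrid_refined}(ii) and using lower semicontinuity of moments under weak convergence. (Equivalently, \eqref{eq:invariant-measure-error} can be proved directly: $\rho_\ast$ satisfies Assumption~\ref{ass:initial-density}, so the case $\rho_0=\rho_\ast$ of the first display of this proof combined with Theorem~\ref{thm:ergodicity} gives $W_1(\bar{\rho}_*^\tau,\rho_\ast)\le Ce^{-\lambda T_0}W_1(\bar{\rho}_*^\tau,\rho_\ast)+C_0\tau^2$, and $Ce^{-\lambda T_0}\le\tfrac12$ lets one absorb the left-hand term.) The main obstacle, already stressed above, is arranging that the restart constants are uniform in the window index; this is exactly why the windows are opened at the exact Fokker--Planck densities, whose a priori estimates (Proposition~\ref{pro:density-SDE}, Theorem~\ref{thm:gradient_estimate}) and moment bounds (Lemma~\ref{lem:moment_hybrid_refined}(i)) are uniform in time, rather than at the numerical densities.
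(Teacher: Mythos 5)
Your proposal is correct and follows essentially the same route as the paper: restart the finite-horizon relative-entropy/Talagrand estimate at the exact Fokker--Planck densities $\rho(m\tau,\cdot)$ (whose a priori bounds are uniform in $m$), combine with the $W_1$-contraction of the scheme over windows of length $\approx T_0$, and iterate the resulting recursion $e_n\le C_0\tau^2+\tfrac12 e_{n-n_1}$ before passing to the limit for the invariant measure. The only differences are cosmetic — you run the recursion for arbitrary $n$ rather than multiples of $n_0$ and spell out the finiteness of $W_1(\bar\rho_*^\tau,\rho_*)$ in the limiting step, both of which are harmless refinements of the paper's argument.
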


\begin{proof}
We first establish the uniform-in-time estimate \eqref{eq:uniform-in-time}.  
From the local error analysis in relative entropy and a suitable transport--entropy inequality, one obtains that for any fixed finite horizon $T>0$,
\[
W_1\!\big(\cS(\tau)^n \rho,\,(\cS^\tau)^n \rho \big) \;\le\; C(T)\,\tau^2,
\qquad \text{for all } n\tau \le T,
\]
for any $\rho$ satisfying Assumption \ref{ass:initial-density} where $C(T)$ depends on $T$ and $\rho$ through the coefficients in Assumption \ref{ass:initial-density} only, and is independent of $\tau$.  

To get the uniform-in-time estimate, we combine the finite-time estimate with the exponential ergodicity of the numerical dynamics, together with the uniform estimates of the density for the time-continuous dynamics.  

By the exponential ergodicity of the random splitting LMC (Theorem~\ref{thm:ergodicity0}), there exists $T_0>0$ such that whenever $n\ge T_0/\tau$ and any probability measures $\mu,\nu$
\[
W_1\!\big((\cS^\tau)^{n}\mu,\,(\cS^\tau)^{n}\nu\big) \;\le\; \gamma' W_1(\mu,\nu), 
\]
where $\gamma'$ is independent of $\tau$.

Set $n_0 = \lceil T_0/\tau\rceil$ and take $n = k n_0$, $m=(k-1)n_0$. Using the semigroup property, one has
\begin{multline*}
W_1\!\big(\cS(\tau)^n \rho_0,\,(\cS^\tau)^n \rho_0\big) \\
\le W_1\!\big(\cS(\tau)^{n-m}\rho(t_m),\,(\cS^\tau)^{n-m}\rho(t_m)\big)
+ W_1\!\big((\cS^\tau)^{n-m}\rho(t_m),\,(\cS^\tau)^{n-m}\bar\rho_m\big).
\end{multline*}

Since $(n-m)\tau=n_0\tau\le T_0+\tau_0$ with $\tau_0$ being the maximum step size,  the first term on the right-hand side is bounded by $C(T_0+\tau_0)\tau^2$ and the constant $C(T_0+\tau_0)$ is independent of $k$ since $\rho(t_m)$ has uniform estimates by Proposition \ref{pro:density-SDE} and Theorem \ref{thm:gradient_estimate}.  
The second term is bounded by
\[
W_1\!\big((\cS^\tau)^{n-m}\rho(t_m),\,(\cS^\tau)^{n-m}\bar\rho_m\big) 
\;\le\; \gamma' W_1(\rho(t_m),\,\bar\rho_m).
\]
Hence,
\[
W_1\!\big(\cS(\tau)^{kn_0} \rho_0,\,(\cS^\tau)^{kn_0} \rho_0\big) \;\le\; C \tau^2+\gamma' W_1\!\big(\cS(\tau)^{(k-1)n_0} \rho_0,\,(\cS^\tau)^{(k-1)n_0} \rho_0\big).
\]
which establishes \eqref{eq:uniform-in-time} for $n=kn_0$.
For general $n$, one just applies the local in time error estimate again starting from the nearest integer of the form $kn_0$.

Finally, we pass to the limit $n\to\infty$. By ergodicity of the continuous and random splitting LMC dynamics,
one has \eqref{eq:invariant-measure-error}.
\end{proof}



\section{Numerical Experiments}\label{sec:experiments}

This section presents numerical evidence for the fourth-order convergence of the randomized splitting Langevin method (RSLMC) in relative entropy. We study three one-dimensional benchmarks and one two-dimensional multimodal example, chosen to cover light-tailed non-Gaussian targets, non-convex multi-well energy landscapes, a rigorously constructed model that satisfies all assumptions of the theory, and a higher-dimensional mixture model. Unless otherwise stated, we use $M=10^7$ Monte Carlo trajectories, a final integration time $T=50$, and inverse temperature $\beta=1$. For each step size $\tau$, the endpoint distribution generated by RSLMC is compared against a reference distribution obtained by either exact sampling (inverse transform), rejection sampling with a Gaussian proposal, or direct mixture sampling.

Probability densities are approximated by Gaussian kernel density estimation (KDE). Given samples $\{X^{(j)}\}_{j=1}^M$ in dimension $d$, the estimator is
\[
\hat\rho(x) = \frac{1}{M\,h^d}\sum_{j=1}^M (2\pi)^{-d/2}\exp\!\left(-\frac{\|x-X^{(j)}\|^2}{2h^2}\right),
\]
where the bandwidth $h$ is chosen automatically by Silverman's rule of thumb
\[
h = \sigma\left(\frac{4}{d+2}\cdot\frac{1}{M}\right)^{1/(d+4)},
\]
with $\sigma$ is the sample standard deviation. In one-dimensional tests, the selected bandwidth is further scaled by a constant factor to suppress spurious oscillations; in the two-dimensional case, the unmodified rule is applied. All estimated densities are normalized over the computational grid used for entropy integration, and a small floor $\epsilon=10^{-12}$ is enforced inside logarithms to stabilize the KL computation.

In RSLMC, the deterministic ODE substep follows the drift flow $\dot x=-\nabla U(x)$. When a closed-form flow is not available, we discretize this ODE by the second-order Heun (explicit trapezoidal) method:
\[
Y = X + \tau f(X),\qquad X_{\text{new}} = X + \tfrac{\tau}{2}\big(f(X)+f(Y)\big),
\]
with $f(x)=-\nabla U(x)$. For the double-well potential, the drift admits an analytic Strang splitting: write $f(x)=-4x^3+4x=f_1(x)+f_2(x)$, whose subflows are
\[
\phi_h^{(1)}(x)=\frac{x}{\sqrt{1+8h\,x^2}},\qquad \phi_h^{(2)}(x)=x\,e^{4h}.
\]
A step of size $h$ is then the exact composition $\Phi_h=\phi_{h/2}^{(1)}\circ \phi_h^{(2)}\circ \phi_{h/2}^{(1)}$.  

\medskip

\subsection{One-dimensional test cases}

\paragraph{(a) Quadratic--logcosh model (compliant example).}
The first experiment considers the potential
\[
U(x) = \tfrac{\lambda}{2}x^2 + \varepsilon \log(2\cosh x),
\]
with parameters $\lambda=1$ and $\varepsilon=0.8$. The quadratic term enforces strong convexity away from the origin, while the $\log\cosh$ perturbation has bounded derivatives, so the model satisfies all assumptions of the theoretical framework. Reference samples are drawn by rejection sampling using a Gaussian proposal matched to the quadratic part. The numerical simulations use Heun drift steps with $\tau \in \{1,2^{-1},\,2^{-2},\,2^{-3},\,2^{-4}\}$. Kernel densities are estimated on a 512-point grid covering the central $[10^{-4},\,1-10^{-4}]$ quantiles; the Silverman's bandwidth is scaled by a factor of two.

\begin{figure}[!ht]
    \centering
    \includegraphics[width=0.6\linewidth]{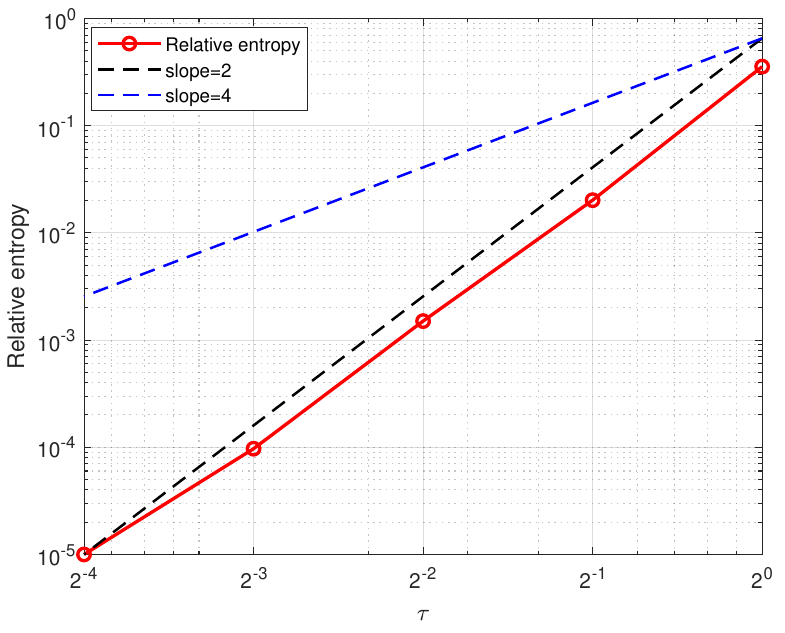}
    \caption{Relative entropy versus step size for the quadratic--logcosh model (log--log scale). 
    The empirical curve (solid line with markers) is compared with reference slopes $\tau^2$ and $\tau^4$ (dashed).}
    \label{fig:compliant}
\end{figure}

Figure~\ref{fig:compliant} demonstrates fourth-order convergence for the quadratic--logcosh model, which fully satisfies the theoretical assumptions. 
The observed KL divergence decays from $3.5\times 10^{-1}$ at $\tau=1$ to $10^{-5}$ at $\tau=2^{-4}$, with the empirical curve matching the $\tau^4$ reference and diverging clearly from a second-order trend. 
This provides direct numerical evidence that the sharp theoretical error bound is realized in practice. 
The tested step sizes $\tau \in \{1,2^{-1},\,2^{-2},\,2^{-3},\,2^{-4} \}$ are selected to cover both coarse scales, where higher-order accuracy is most visible, and fine scales, where the asymptotic error constant can be quantified.

\paragraph{(b) Double-well potential.}
The second example is the quartic double-well potential
\[
U(x) = (x^2-1)^2,
\]
whose Gibbs distribution is bimodal with metastable wells around $\pm1$. This non-convex landscape tests stability and accuracy under non-globally Lipschitz drifts. Reference samples are produced by rejection sampling with a Gaussian proposal. The RSLMC drift step uses the analytic Strang splitting described above; time steps are $\tau=2^{-4},\,2^{-5},\,2^{-6},\,2^{-7},\,2^{-8}$. For KDE, we fix a uniform grid of 1024 points on $[-4,4]$ and use Silverman's bandwidth.

\begin{figure}[!ht]
    \centering
    \includegraphics[width=0.6\linewidth]{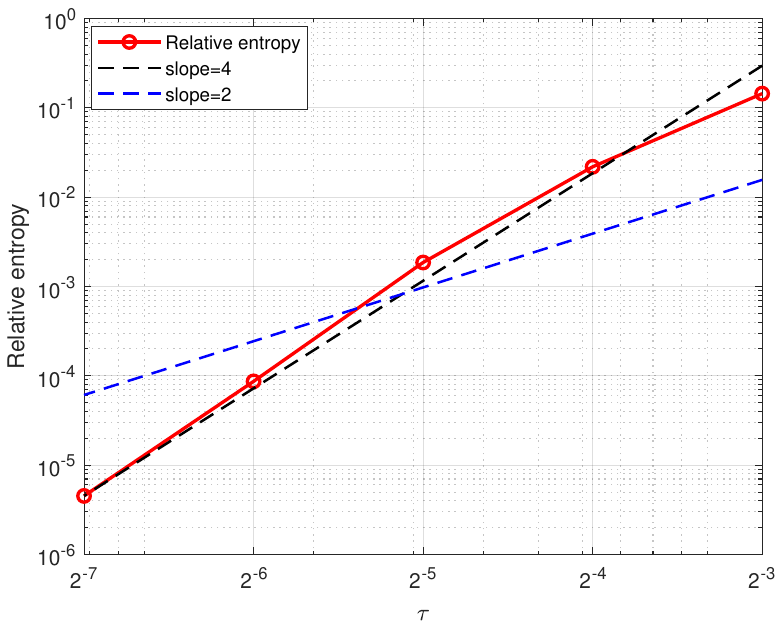}
    \caption{Relative entropy versus step size for the 1D double-well potential (log--log scale). 
    The empirical curve (solid line with markers) is compared with reference slopes $\tau^2$ and $\tau^4$ (dashed).}
    \label{fig:doublewell}
\end{figure}

Numerical results in Figure~\ref{fig:doublewell} confirm the predicted fourth-order decay of the KL divergence for the double-well potential. 
Across the tested step sizes, the empirical curve remains nearly parallel to the $\tau^4$ line and is clearly separated from a second-order slope. 
The KL divergence decreases from approximately $1.5\times 10^{-1}$ at $\tau=2^{-4}$ to $4\times 10^{-6}$ at $\tau=2^{-8}$, reaching very small error magnitudes even in this metastable, non-convex landscape. 
The choice of dyadic step sizes down to $2^{-8}$ balances the need for sufficiently fine resolution to resolve metastable transitions while keeping the computational effort feasible.

\paragraph{(c) Logistic distribution.}
The  experiment considers the logistic distribution,
\[
\pi(x)\propto \frac{e^{-x}}{(1+e^{-x})^2},
\]
which has lighter-than-Gaussian tails and does not satisfy strong convexity. This case is included to illustrate the robustness of RSLMC when applied to non-standard target measures outside the strict theoretical framework. The reference distribution is generated via inverse-CDF sampling. The numerical side employs RSLMC with the Heun ODE step and step sizes $\tau \in \{0.2, 0.4, 0.6, 0.8, 1.0\}$. Densities are approximated on a uniform grid of 512 points covering the central quantile interval $[10^{-4}, 1-10^{-4}]$. The bandwidth is selected by Silverman's rule and multiplied by a factor of three to ensure smoothness of the KDE.

\begin{figure}[!ht]
    \centering
    \includegraphics[width=0.6\linewidth]{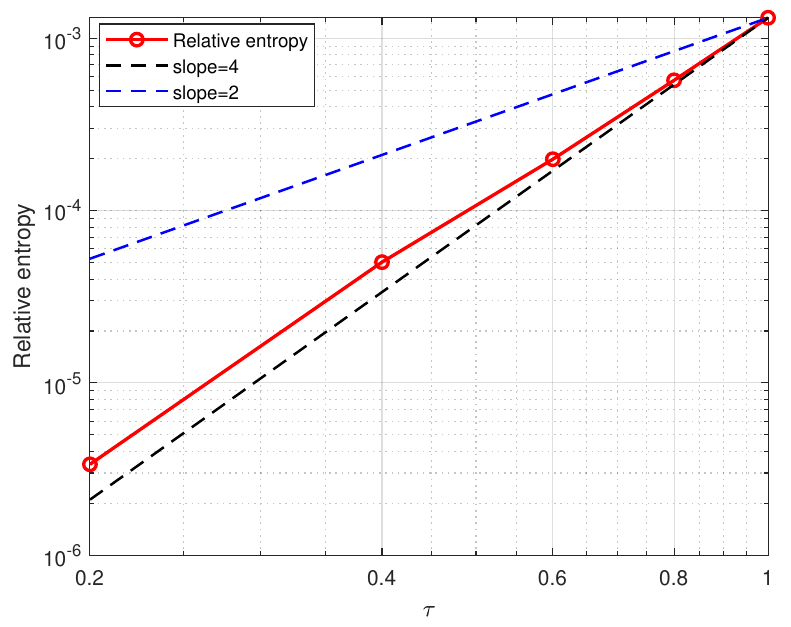}
    \caption{Relative entropy versus step size for the 1D logistic distribution (log--log scale). 
    The empirical curve (solid line with markers) is compared with reference slopes $\tau^2$ and $\tau^4$ (dashed).}
    \label{fig:logistic}
\end{figure}

Numerical analysis confirms fourth-order convergence in relative entropy for the logistic distribution. 
As seen in Figure~\ref{fig:logistic}, the empirical curve follows the $\tau^4$ reference slope closely and deviates significantly from the second-order line. 
The KL divergence decreases from about $2\times 10^{-3}$ at $\tau=1.0$ to $3\times 10^{-6}$ at $\tau=0.2$, demonstrating that RSLMC achieves accurate sampling even in a non-strongly-convex, light-tailed target distribution. 
The step sizes are chosen in the range $\tau\in[0.2,1]$ to highlight both the stability of the method at coarse resolution and its accuracy at practically relevant finer steps.

\subsection{Two-dimensional mixture of Gaussians}

To illustrate performance in higher dimensions, we consider a two-component Gaussian mixture distribution
\[
\pi(x) = \tfrac{1}{2}\mathcal{N}((-2,0), \Sigma_1) + \tfrac{1}{2}\mathcal{N}((2,0), \Sigma_2),
\]
with covariance matrices
\[
\Sigma_1 = \begin{pmatrix}0.6 & 0.2 \\ 0.2 & 0.5\end{pmatrix},\qquad
\Sigma_2 = \begin{pmatrix}0.5 & -0.1 \\ -0.1 & 0.7\end{pmatrix}.
\]
This multimodal target provides a stringent benchmark for sampling in higher-dimensional, non-convex settings. Reference samples are generated directly from the analytic mixture. The numerical side uses RSLMC with the Heun ODE step and step sizes $\tau\in\{0.1,0.2,0.4,0.6,0.8\}$. KDE is performed on a $300\times 300$ uniform grid spanning a bounding box defined by empirical quantiles of the reference samples. The bandwidth is determined by Silverman's rule.

\begin{figure}[!ht]
    \centering
    \includegraphics[width=0.6\linewidth]{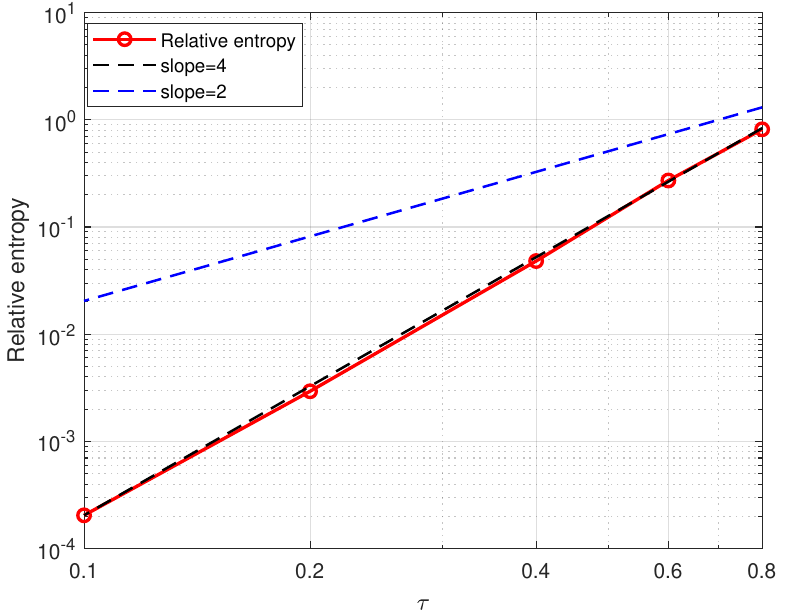}
    \caption{Relative entropy versus step size for the 2D Gaussian mixture (log--log scale). 
    The empirical curve (solid line with markers) is compared with reference slopes $\tau^2$ and $\tau^4$ (dashed).}
    \label{fig:mog2d}
\end{figure}

The two-dimensional Gaussian mixture experiment, reported in Figure~\ref{fig:mog2d}, further confirms fourth-order convergence of RSLMC in higher-dimensional and non-convex settings. 
The KL divergence decreases from $8.5\times 10^{-1}$ at $\tau=0.8$ to $2\times 10^{-4}$ at $\tau=0.1$, with the log--log slope closely aligned with the $\tau^4$ reference throughout the tested range. 
This demonstrates that RSLMC maintains its theoretical accuracy even in multi-modal distributions, where sampling is particularly challenging. 
The choice of step sizes $\tau\in\{0.1,0.2,0.4,0.6,0.8\}$ reflects a compromise between the cost of high-dimensional KDE estimation and the need to verify convergence over multiple scales.

\section*{Acknowledgement}

This work was financially supported by the National Key R\&D Program of China, Project Number 2021YFA1002800.
The work of L. Li was partially supported by NSFC 12371400, and  Shanghai Municipal Science and Technology Major Project 2021SHZDZX0102.

\appendix

\section{Estimates of the solution to Fokker-Planck}\label{app:esti-FP}

In this section, we establish bounds for the solution of the Fokker-Planck equation corresponding to the Langevin dynamics.

We consider
\[
q_t(x)=\rho_t(x)/e^{-\beta U(x)}.
\]
Then, 
\[
\partial_t q=-\nabla U\cdot \nabla q+\beta^{-1}\Delta q.
\]
It is well know that
\[
q_t(x)=\E q_0(X_t(x)),
\]
where $X_t(x)$ is the stochastic trajectory of the SDE
\[
dX=-\nabla U(X)\,dt+\sqrt{2\beta^{-1}}\,dW.
\]
Recall the claim that
\[
B_1\exp(-C|x|^p)  \le q_t(x)\le B_2\exp(\beta (1-\delta)  U(x)).
\]

\begin{proof}[ \textit{Proof of Proposition \ref{pro:density-SDE}} ]
By the assumption on $\rho_0$,
\[
B_1'\exp(-C'|x|^p)  \le q_0(x)\le B_2'\exp((1-\delta')\beta U).
\]

For the upper bound, we apply It\^o's formula to  $e^{\beta(1-\delta)U}$. In fact,
\[
dU(X)=(\beta^{-1}\Delta U-|\nabla U|^2)\,dt+\sqrt{2\beta^{-1}}\nabla U(X)dW.
\]
Then,
\[
\frac{d}{dt}\E e^{\beta(1-\delta)U(X)}=\E\left\{ e^{\beta(1-\delta)U(X)}[-\beta(1-\delta)\delta |\nabla U(X)|^2+(1-\delta)\Delta U(X)]\right\}.
\]
By the assumptions
\[
-\beta(1-\delta)\delta |\nabla U(X)|^2+(1-\delta)\Delta U(X)\le -\lambda +C1_{|X|\le L}.
\]
Hence,
\[
\frac{d}{dt}\E e^{\beta(1-\delta)U(X)}\le -\lambda \E e^{\beta(1-\delta)U(X)}+C.
\]
This implies that
\[
\sup_{t\ge 0}q_t(x)\le e^{\beta(1-\delta)U(x)}e^{-\lambda t}+C.
\]

For the lower bound, we show that there exists $L>0$ such that for all $x$ and $t$ that,
\[
\P(|X_t(x)|\le L(|x|+1)) \ge 1/2.
\]
In fact, 
by the Markov inequality
\begin{multline*}
\P(|X|\ge L(|x|+1))\le [e^{\beta(1-\delta)U(x)}e^{-\lambda t}+C]/\inf_{y\in \R^d\setminus B(0, L(1+|x|))}e^{\beta(1-\delta)U(y)}\\
\le \exp(\beta(1-\delta)[\lambda_2 |x|^2-\lambda_1|L+L|x||^2])e^{-\lambda t}
+C\exp(\beta(1-\delta)[-\lambda_1|L+L|x||^2]).
\end{multline*}
Then,
\[
\inf_t q_t(x)\ge \E  B_1\exp(-C|X_t(x)|^p)\ge B_1'\exp(-CL^p|x|^p).
\]

\end{proof}

\section{Some missing details in  Theorem \ref{thm:gradient_estimate} }\label{app:compute}

This appendix provides supplementary computational details for Theorem \ref{thm:gradient_estimate}, specifically documenting the algebraic expansions and simplifications required in the derivation process.
Recall that
\begin{gather*}
v:=\frac{|\nabla f|^2}{(1-f)^2}=\frac{f_k f_k}{(1-f)^2}, 
 \quad 
 f = \log \rho.
\end{gather*}
For $v$, one has
$$
\begin{aligned}
	\mathcal{L} v= &   v_{i i}+b^i v_i-v_t  \\
	= & \frac{2   f_{k i} f_{k i}}{(1-f)^2}+\frac{8   f_k f_{k i} f_i}{(1-f)^3}+\frac{6   f_k f_k f_i f_i}{(1-f)^4} 
	 -\frac{4   f_k f_{i k} f_i}{(1-f)^2}-\frac{2   f_i f_i f_k f_k}{(1-f)^3}-R_1,
\end{aligned}
$$
where
$$
R_1:=\frac{2 c f_k f_k}{(1-f)^3}+2 \frac{b_k^i f_k f_i+c_k f_k}{(1-f)^2}.
$$
By rearranging terms and introducing a parameter $\varepsilon $, we rewrite the expression as
$$
\begin{aligned}
	L v= & \frac{2(1-\varepsilon)   f_{k i} f_{k i}}{(1-f)^2}+\frac{2   f_i v_i}{1-f}-2   f_i v_i \\
	& +2\left[\frac{\varepsilon   f_{k i} f_{k i}}{(1-f)^2}+\frac{2   f_k f_{k i} f_i}{(1-f)^3}+\frac{\varepsilon^{-1}   f_k f_k f_i f_i}{(1-f)^4}\right] \\
	& +\left(2-2 \varepsilon^{-1}\right) \frac{  f_k f_k f_i f_i}{(1-f)^4}+\frac{2   f_i f_i f_k f_k}{(1-f)^3}-R_1,
\end{aligned}
$$
then taking $\varepsilon=2 / 3$,
$$
\begin{aligned}
\tilde{L} v :=	L v-\frac{2 f}{1-f}   f_i v_i & \geq \frac{2   f_{k i} f_{k i}}{3(1-f)^2}+\frac{  f_k f_k f_i f_i}{(1-f)^3}-R_1 \\
	& \geq \frac{2}{3}  \frac{  f_{k i} f_{k i}}{(1-f)^2}+   (1-f) v^2-R_1 .
\end{aligned}
$$
Based on Young inequality and the boundedness of $ \nabla U$ and its derivatives, we obtain:
$$
\begin{aligned}
\tilde{\mathcal{L}} v 
\geq 
\frac{  }{2}(1-f) v^2-M,
\end{aligned}
$$
where
$$
M = C_{\varepsilon} \left( 1 +\|\partial b\|_{L^{\infty}}^2 +\|c\|_{L^{\infty}}^2+\|\partial c\|_{L^{\infty}}^2\right).
$$
Recall that
\[
u=\frac{f_{k\ell}f_{k\ell}}{A(1-f)^2-f_j f_j}.
\]
By direct computation, one has
\begin{equation}
\begin{split}
	L u= &   u_{ii}+  b^i u_i-u_t\\
	= & \frac{2 f_{k\ell i}f_{k\ell i}}{A(1-f)^2-f_j f_j}
	+\frac{8 f_{k\ell}f_{k\ell i}}{(A(1-f)^2-f_jf_j)^2}(A(1-f)f_i+f_j f_{ji})\\
	 & +\frac{8 f_{k\ell}f_{k\ell}}{(A(1-f)^2-f_jf_j)^3}(A(1-f)f_i+f_j f_{ji})^2
	-\frac{4f_{k\ell}(f_{i\ell}f_{ki}+f_i f_{k\ell i})}{A(1-f)^2-f_jf_j}\\
	& +\frac{2f_{k\ell}f_{k\ell}}{(A(1-f)^2-f_jf_j)^2}[f_{ji}f_{ji}-Af_i^2-A(1-f)f_i^2-2f_jf_i f_{ij}]
	 - R_2,
\end{split}
\end{equation}
where
$$
R_2 := \frac{2f_{k\ell} (f_i b^i_{\ell k} + b^i_\ell f_{i k} + b^i_k f_{i\ell} + c_{kl})}{A(1-f)^2 - f_j f_j} 
+ \frac{2f_{k\ell} f_{k\ell} \left( A(1-f) c + f_j c_j \right)}{(A(1-f)^2 - f_j f_j)^2}.
$$
Noting that
\[
u_i=\frac{2f_{k\ell}f_{k\ell i}}{A(1-f)^2-f_jf_j}
+\frac{2f_{k\ell}f_{k\ell}}{(A(1-f)^2-f_jf_j)^2}(A(1-f)f_i+f_j f_{ji}),
\]
we find that
\begin{multline*}
	L u-4\frac{A(1-f)f_i+f_k f_{ki}}{A(1-f)^2-f_jf_j} u_i=
	\frac{2 f_{k\ell i}f_{k\ell i}}{A(1-f)^2-f_j f_j}
	-\frac{4f_{k\ell}(f_{i\ell}f_{ki}+f_i f_{k\ell i})}{A(1-f)^2-f_jf_j}\\
	+\frac{2f_{k\ell}f_{k\ell}}{(A(1-f)^2-f_jf_j)^2}[-Af_i^2-A(1-f)f_i^2-2f_jf_i f_{ij}]
	+\frac{2f_{k\ell}f_{k\ell}f_{ji}f_{ji}}{(A(1-f)^2-f_jf_j)^2} - R_2.
\end{multline*}
Applying Cauchy's inequality repeatedly,
$f_{k\ell}f_{i\ell}f_{ki}\le |f_{k\ell}f_{k\ell}|^{3/2}$
and thus
\begin{gather*}
	-\frac{4f_{k\ell}f_{i\ell}f_{ki}}{A(1-f)^2-f_jf_j}
	\ge -\epsilon u^2-4C\epsilon^{-1}(A(1-f)^2-f_jf_j)^{3/2}.
\end{gather*}
Moreover
\[
-\frac{4f_{k\ell}f_i f_{k\ell i}}{A(1-f)^2-f_jf_j}
\ge -\epsilon \frac{f_{k\ell i}f_{k\ell i}}{A(1-f)^2-f_jf_j}
-\epsilon u^2-C(\frac{f_if_i}{A(1-f)^2-f_jf_j})^2.
\]
Similarly,
\[
\frac{2f_{k\ell}f_{k\ell}}{(A(1-f)^2-f_jf_j)^2}[-A(2-f)f_i^2]
\ge -\epsilon u^2-C\frac{[(2-f) f_i^2]^2}{(A(1-f)^2-f_jf_j)^2},
\]
and
\[
\frac{2f_{k\ell}f_{k\ell}}{(A(1-f)^2-f_jf_j)^2}[-2f_jf_i f_{ij}]
\ge -\epsilon u^2-C\frac{(f_if_i)^3}{(A(1-f)^2-f_jf_j)^{3/2}}.
\]
It remains to justify how the remainder terms produced above 
(such as quartic or cubic powers of $\nabla f$ divided by the denominator, 
or positive powers of $A(1-f)^2-f_j f_j$) are absorbed into the control function $q(x)$. 
Recall that $q(x)=1+|x|^r$ with $r$ sufficiently large. 

As a representative example, consider
\[
\frac{(f_if_i)^3}{(A(1-f)^2-f_jf_j)^{3/2}},
\]
whose numerator grows at most cubically in $\nabla f$ 
while the denominator is strictly positive by construction. 
Hence this term has at most polynomial growth in $|x|$ 
and is bounded by $Cq(x)$. 
Similarly, 
\[
\Big(\frac{f_if_i}{A(1-f)^2-f_jf_j}\Big)^2 \le C, 
\qquad 
\frac{[(2-f) f_i^2]^2}{(A(1-f)^2-f_jf_j)^2} \le Cq(x),
\]
and likewise for the positive term $(A(1-f)^2-f_jf_j)^{3/2}$. 

Therefore, all remainder terms can be uniformly controlled by $Cq(x)$, 
and the final inequality reads
\[
\tilde{\cL}u := \cL u-4\frac{A(1-f)f_i+f_k f_{ki}}{A(1-f)^2-f_jf_j} u_i
\;\ge\; \frac{ f_{k\ell i}f_{k\ell i}}{A(1-f)^2-f_j f_j}
+ \tfrac{3}{2}u^2 - C q(x) - R_2(x),
\]
as used in the main text.

\section{Calculation and pointwise estimates of the remainder terms}\label{app:pointremainder}

In this subsection, we aim to find some pointwise estimates of $E_i(t,x)$ defined above.

\subsection{The calculation of the formulas}

Consider the term for $I_1$. We split
\begin{multline*}
\frac{1}{2}\int (\log \frac{\bar{\rho}}{\rho}) I_1\,dx  = \frac{1}{2} \beta^{-1} \int (\log\frac{\bar{\rho}}{\rho})
	 (\Delta h_2) (s, \phi_{-s} (x) )  J(-s, x )  dx  \\
	 -\frac{1}{2} \beta^{-1}\int (\log\frac{\bar{\rho}}{\rho})
	 \Delta_x \big( h_2(s, \phi_{-s} (x) ) J(-s, x ) \big) dx=:I_{11}+I_{12}.
\end{multline*}
For the first term, by change of variables
\[
u=\phi_{-s}(x) \Rightarrow x=\phi_s(u),
\]
it reduces to
\begin{gather*}
\begin{split}
I_{11}&=\frac{1}{2}\beta^{-1}\int (\log\frac{\bar{\rho}(t, \phi_{s} (u))}{\rho(t,\phi_{s} (u))})
	 \Delta_u h_2(s, u) du\\
	&= -\frac{1}{2}\beta^{-1}\int (\nabla_u\phi_s(u)\cdot \nabla \log\frac{\bar{\rho}(t,\phi_{s} (u))}{\rho(t,\phi_{s} (u))})
	\cdot \nabla_u h_2(s, u) du\\
	&=-\frac{1}{2}\beta^{-1}\int \left((\nabla_x\phi_{-s}(x))^{-1}\cdot \nabla \log\frac{\bar{\rho}(t, x)}{\rho(t,x)}\right)
	\cdot (\nabla h_2)(s, \phi_{-s}(x)) J(-s, x)dx.
\end{split}
\end{gather*}

By integration by parts, the second term is then given by
\begin{multline*}
I_{12}=\frac{1}{2}\int \nabla \log\frac{\bar{\rho}(t, x)}{\rho(t,x)}
\cdot\left(\nabla_x\phi_{-s}(x)\cdot \nabla h_2(s, \phi_{-s}(x)) J(-s, x)+h_2(s, \phi_{-s}(x) \nabla J(-s, x)) \right).
\end{multline*}
Hence, one has
\begin{multline}
\frac{1}{2}\int (\log \frac{\bar{\rho}}{\rho}) I_1\,dx =  \frac{1}{2} \beta^{-1} \int  \nabla \log\frac{\bar{\rho}}{\rho}
	 \cdot \Big[\nabla_x\phi_{-s}(x)-(\nabla_x\phi_{-s}(x))^{-T}\Big]\cdot\nabla h_2(s, \phi_{-s} (x) )
	 J(-s,x )\,dx  \\
	   +\frac{1}{2}\beta^{-1}\int \nabla \log\frac{\bar{\rho}}{\rho}\cdot \nabla J(-s, x) h_2(s, \phi_{-s}(x))\,dx.
\end{multline}

By integration by parts, it is relatively straightforward to see that
\begin{equation}
\begin{split}
\frac{1}{2}\int (\log \frac{\bar{\rho}}{\rho}) I_2\,dx &= \frac{1}{2}\int \nabla \log\frac{\bar{\rho}(t,x)}{\rho(t,x)}\cdot  \int \frac{1}{(4 \pi \beta^{-1} s)^{\frac{d}{2}}}
		e^{-\frac{\beta |x-y|^2}{4 s} } \big( b(y) - b(x) \big) 
		h_1(s, y) dy dx\\
	&=\frac{1}{2}\int \nabla \log\frac{\bar{\rho}}{\rho}\cdot  \int 
		(y-x)\cdot \overline{\nabla b}(x, y)  \frac{e^{-\frac{\beta |x-y|^2}{4 s} }}{(4 \pi \beta^{-1} s)^{\frac{d}{2}}}  h_1(s, y) dy dx,
\end{split}
\end{equation}
where we recall
\begin{gather}
\overline{\nabla b}(x, y):=\int_0^1\nabla b(\lambda y+(1-\lambda)x)d\lambda.
\end{gather}

Formal expansion of the semigroup indicates that this should agree with $R_1$ in the $O(s)$ order. We need to do some treatment to change this form here into a form that matches the expressions in $R_1$.
To this end, we split the above expression into two halves.
Noting that 
\[
\frac{1}{2}(y-x)e^{-\frac{\beta |x-y|^2}{4 s} }
=-s\beta^{-1}\nabla_y e^{-\frac{\beta |x-y|^2}{4 s} }=s\beta^{-1}\nabla_xe^{-\frac{\beta |x-y|^2}{4 s} },
\]
one finds that
\begin{equation}
\begin{split}
&\frac{1}{2}\int (\log \frac{\bar{\rho}}{\rho}) I_1\,dx =
\frac{s\beta^{-1}}{2}\int \nabla \log\frac{\bar{\rho}}{\rho}\cdot  \Big[\int 
		\nabla_x \frac{e^{-\frac{\beta |x-y|^2}{4 s} }}{(4 \pi \beta^{-1} s)^{\frac{d}{2}}} \cdot \overline{\nabla b}(x, y)    h_1(s, y) dy\Big] dx\\
&-\frac{s\beta^{-1}}{2}\int \nabla \log\frac{\bar{\rho}}{\rho}\cdot \Big[ \int 
		\nabla_y \frac{e^{-\frac{\beta |x-y|^2}{4 s} }}{(4 \pi \beta^{-1} s)^{\frac{d}{2}}} \cdot \overline{\nabla b}(x, y)    h_1(s, y) dy\Big] dx\\
&=:I_{21}+I_{22}.
\end{split}
\end{equation}
Integration by parts, one finds that
\begin{multline*}
I_{21}=-\frac{s\beta^{-1}}{2}\int \nabla^2 \log\frac{\bar{\rho}}{\rho}: \int 
		\frac{e^{-\frac{\beta |x-y|^2}{4 s} }}{(4 \pi \beta^{-1} s)^{\frac{d}{2}}}  \overline{\nabla b}(x, y)   h_1(s, y) dy dx\\
-\frac{s\beta^{-1}}{2}\int \nabla \log\frac{\bar{\rho}}{\rho}\cdot \int 
		\frac{e^{-\frac{\beta |x-y|^2}{4 s} }}{(4 \pi \beta^{-1} s)^{\frac{d}{2}}}  \int_0^1 (1-\lambda) \Delta b(\lambda y+(1-\lambda)x)d\lambda    h_1(s, y) dy dx.
\end{multline*}
We do integration by parts again for the first term in $I_{21}$, but for the index that taking contraction with the components of $b$, to have
\begin{multline*}
I_{21}=\frac{s\beta^{-1}}{2}\int \nabla \log\frac{\bar{\rho}}{\rho}\cdot \int \frac{e^{-\frac{\beta |x-y|^2}{4 s} }}{(4 \pi \beta^{-1} s)^{\frac{d}{2}}}  \int_0^1 (1-\lambda)\nabla (\nabla\cdot b)(\lambda y+(1-\lambda)x)d\lambda    h_1(s, y) dy dx\\
+\frac{s\beta^{-1}}{2}\int \nabla \log\frac{\bar{\rho}}{\rho}\cdot \Big[\int  \overline{\nabla b}(x, y) 
\cdot \nabla_x\frac{e^{-\frac{\beta |x-y|^2}{4 s} }}{(4 \pi \beta^{-1} s)^{\frac{d}{2}}}  h_1(s, y) dy\Big] dx\\
-\frac{s\beta^{-1}}{2}\int \nabla \log\frac{\bar{\rho}}{\rho}\cdot \int  \frac{e^{-\frac{\beta |x-y|^2}{4 s} }}{(4 \pi \beta^{-1} s)^{\frac{d}{2}}}  \int_0^1 (1-\lambda) \Delta b(\lambda y+(1-\lambda)x)d\lambda    h_1(s, y) dy dx.
\end{multline*}
Then, we change $\nabla_x$ in the second term above to $-\nabla_y$ and do integration by parts and have
\begin{multline*}
I_{21}=\frac{s\beta^{-1}}{2}\int \nabla \log\frac{\bar{\rho}}{\rho}\cdot \int \frac{e^{-\frac{\beta |x-y|^2}{4 s} }}{(4 \pi \beta^{-1} s)^{\frac{d}{2}}}  \overline{\nabla(\nabla\cdot b)}(x, y)    h_1(s, y) dy dx\\
+\frac{s\beta^{-1}}{2}\int \nabla \log\frac{\bar{\rho}}{\rho}\cdot \Big[\int  \overline{\nabla b}(x, y)
\frac{e^{-\frac{\beta |x-y|^2}{4 s} }}{(4 \pi \beta^{-1} s)^{\frac{d}{2}}}\cdot  \nabla h_1(s, y) dy\Big] dx\\
-\frac{s\beta^{-1}}{2}\int \nabla \log\frac{\bar{\rho}}{\rho}\cdot \int 
		\frac{e^{-\frac{\beta |x-y|^2}{4 s} }}{(4 \pi \beta^{-1} s)^{\frac{d}{2}}}  \int_0^1 (1-\lambda) \Delta b(\lambda y+(1-\lambda)x)d\lambda    h_1(s, y) dy dx,
\end{multline*}
where similarly
\begin{gather}
\overline{\nabla(\nabla\cdot b)}(x, y):=\int_0^1 \nabla (\nabla\cdot b)(\lambda y+(1-\lambda)x)d\lambda.
\end{gather}

Direct integration by parts for $I_{22}$ gives
\begin{multline*}
I_{22}=\frac{s\beta^{-1}}{2}\int \nabla \log\frac{\bar{\rho}}{\rho}\cdot \int 
		\frac{e^{-\frac{\beta |x-y|^2}{4 s} }}{(4 \pi \beta^{-1} s)^{\frac{d}{2}}}  \int_0^1 \lambda \Delta b(\lambda y+(1-\lambda)x)d\lambda    h_1(s, y) dy dx\\
+\frac{s\beta^{-1}}{2}\int \nabla \log\frac{\bar{\rho}}{\rho}\cdot \int 
		\frac{e^{-\frac{\beta |x-y|^2}{4 s} }}{(4 \pi \beta^{-1} s)^{\frac{d}{2}}}  \overline{\nabla b}(x, y)^T \cdot \nabla  h_1(s, y) dy dx.
\end{multline*}

Adding all the above up, we arrive at the expressions in \eqref{eq:remainderformula}.

\subsection{Estimates of $E_1$}

By the formulas above, one has $E_1=E_{11}+E_{12}$, where the main term $E_{11}$ is
\begin{multline*}
E_{11}(s,x)=\frac{\nabla_x\phi_{-s}(x)-(\nabla_x\phi_{-s}(x))^{-T}}{s}
\cdot \left(\int \frac{\exp(-\frac{\beta|\phi_{-s}(x)-y|^2}{4s})}{(4\pi \beta^{-1}s)^{d/2}}\nabla \bar{\rho}_n(y)\,dy\right)\\
+\int \frac{e^{-\frac{\beta |x-y|^2}{4 s} }}{(4 \pi \beta^{-1} s)^{\frac{d}{2}}}   [\overline{\nabla b}(x,y)+\overline{\nabla b}(x,y)^T] \cdot \nabla\bar{\rho}_n(\phi_{-s}(y))J(-s, y) dy,
\end{multline*}
while the extra remainder is 
\begin{multline*}
E_{12}=\frac{\nabla_x\phi_{-s}(x)-(\nabla_x\phi_{-s}(x))^{-T}}{s}
\cdot \left(\int \frac{\exp(-\frac{\beta|\phi_{-s}(x)-y|^2}{4s})}{(4\pi \beta^{-1}s)^{d/2}}\nabla \bar{\rho}_n(y)\,dy\right)(J(-s, x)-1)\\
+\int \frac{e^{-\frac{\beta |x-y|^2}{4 s} }}{(4 \pi \beta^{-1} s)^{\frac{d}{2}}}   [\overline{\nabla b}(x,y)+\overline{\nabla b}(x,y)^T] \cdot (\nabla\phi_{-s}(y)-I)\cdot \nabla\bar{\rho}_n(\phi_{-s}(y))J(-s, y) dy\\
+\int \frac{e^{-\frac{\beta |x-y|^2}{4 s} }}{(4 \pi \beta^{-1} s)^{\frac{d}{2}}}   [\overline{\nabla b}(x,y)+\overline{\nabla b}(x,y)^T] \cdot  \bar{\rho}_n(\phi_{-s}(y))\nabla J(-s, y) dy.
\end{multline*}

\noindent {\bf Estimate of the remainder} 

Since $\partial_i J/J=\mathrm{tr}(\nabla \phi^{-1} \partial_i \nabla \phi)$, it is easy to see by the assumptions on $b$ that
\[
\|\nabla J/J\|_{\infty}\le Cs.
\]
where $C$ is uniformly bounded if we require $|s|\le 1$.
Hence, the last term in $E_{12}$ is simply controlled by $Ce^{s\cL_{\xi(2)}}e^{s\cL_{\xi(1)}}\bar{\rho_n} s$.

Moreover, for $|s|\le 1$, it is also easy to see that
\[
|\nabla \phi_{-s}(y)-I|\le C |s|, \quad |J(-s, x)-1|\le Cs, |\frac{\nabla_x\phi_{-s}(x)-(\nabla_x\phi_{-s}(x))^{-T}}{s}|\le C.
\]
 Hence, the first two terms are controlled by
\begin{multline*}
|E_{12}(s,x)| \le Cs\int \frac{e^{-\frac{\beta |\phi_{-s}(x)-y|^2}{4 s} }}{(4 \pi \beta^{-1} s)^{\frac{d}{2}}}  |\nabla\log\bar{\rho}_n(y)|\bar{\rho}_n(y)\,dy\\
+Cs\int \frac{e^{-\frac{\beta |x-y|^2}{4 s} }}{(4 \pi \beta^{-1} s)^{\frac{d}{2}}}  |\nabla\log\bar{\rho}_n(\phi_{-s}(y))|\bar{\rho}_n(\phi_{-s}(y))J(-s, y) dy.
\end{multline*}
These two terms are similar. We take the latter one as the example to show how to control. 
By Lemma \ref{lem:flowmap} and Theorem \ref{thm:deriva_density}, we find that (for $|s|\le 1$)
\[
|\nabla\log\bar{\rho}_n(\phi_{-s}(y))|\le C(1+|\phi_{-s}(y)|^p)
\le C(1+|y|^p+|b(x)|^p|s|^p)\le C(1+|x|^p+|x-y|^p),
\]
where the concrete value of $C$ has been changed. 
Hence, 
\begin{multline*}
\int \frac{e^{-\frac{\beta |x-y|^2}{4 s} }}{(4 \pi \beta^{-1} s)^{\frac{d}{2}}}  |\nabla\log\bar{\rho}_n(\phi_{-s}(y))|\bar{\rho}_n(\phi_{-s}(y))J(-s, y) dy\le \\
 C(1+|x|^p)e^{s\cL_{\xi(2)}}e^{s\cL_{\xi(1)}}\bar{\rho}_n(x)
+\int \frac{e^{-\frac{\beta |x-y|^2}{4 s} }}{(4 \pi \beta^{-1} s)^{\frac{d}{2}}}  |x-y|^p \bar{\rho}_n(\phi_{-s}(y))J(-s, y) dy
\end{multline*}
For the latter, take
\[
\bar{R} = \beta^{-1/2} \sqrt{8s} \max\left\{ 1, \sqrt{ |\log \bar{\rho}|} \right\}.
\]
One then has
\[
\int_{|x-y|\le \bar{R}}
\frac{e^{-\frac{\beta |x-y|^2}{4 s} }}{(4 \pi \beta^{-1} s)^{\frac{d}{2}}}  |x-y|^p \bar{\rho}_n(\phi_{-s}(y))J(-s, y) dy
\le \bar{R}^p e^{s\cL_{\xi(2)}}e^{s\cL_{\xi(1)}}\bar{\rho}_n(x)
\]
while for $|x-y|\ge \bar{R}$, $\exp(-\frac{\beta|x-y|^2}{8s})|x-y|^p\le C_p(s/\beta)^{p/2}\exp(-\frac{\beta|x-y|^2}{16s})$
for some $C_p$ independent of $s$ and $x, y$ so that
\begin{gather}\label{eqaux:largedistance}
\begin{split}
& \int_{|x-y|\ge \bar{R}} 
\frac{e^{-\frac{\beta |x-y|^2}{4 s} }}{(4 \pi \beta^{-1} s)^{\frac{d}{2}}}  |x-y|^p \bar{\rho}_n(\phi_{-s}(y))J(-s, y) dy\\
\le & C(s/\beta)^{p/2}\exp(-\frac{\beta \bar{R}^2}{8s})\int \frac{e^{-\frac{\beta |x-y|^2}{ 16s} }}{(4 \pi \beta^{-1} s)^{\frac{d}{2}}} \bar{\rho}_n(\phi_{-s}(y))J(-s, y) dy.
\end{split}
\end{gather}
Noting that
\[
\exp(-\frac{\beta \bar{R}^2}{8s})\le \exp(-|\log \bar{\rho}|)\le \min\{1, \bar{\rho}\},
\]
and the fact that $\bar{\rho}_n$ is bounded for $t\le T$, we find that
\begin{multline*}
|E_{12}(s,x)|\le Cs(1+|x|^{p})e^{s\cL_{\xi(2)}}e^{s\cL_{\xi(1)}}\bar{\rho_n} 
+Cs(s/\beta)^{p/2} \max\{1, |\log \bar{\rho}|^{p/2}\}e^{s\cL_{\xi(2)}}e^{s\cL_{\xi(1)}}\bar{\rho_n}\\
+Cs(s/\beta)^{p/2}\min\{1, \bar{\rho}(t,x)\}
\le Cs(1+|x|^{p'})\bar{\rho}(t, x),
\end{multline*}
where we used that $|\log \bar{\rho}|$ has polynomial growth.

\vskip 0.1 in

\noindent {\bf Estimate of the main term}

To estimate the main terms in $E_{11}$, we first do change of variables $y'=\phi_{-s}(y)$ and split $E_{11}:=M_1+M_2$, with 
\begin{multline*}
M_1=\frac{\nabla_x\phi_{-s}(x)-(\nabla_x\phi_{-s}(x))^{-T}}{s}
\cdot \\
\int \frac{1}{(4\pi \beta^{-1}s)^{d/2}}\left[\exp(-\frac{\beta|\phi_{-s}(x)-y|^2}{4s})-\exp(-\frac{\beta |x-\phi_{s}(y)|^2}{4 s} )\right]\nabla \bar{\rho}_n(y)\,dy
\end{multline*}
and
\begin{multline*}
M_2=
\int \frac{e^{-\frac{\beta |x-\phi_{s}(y)|^2}{4 s} }}{(4 \pi \beta^{-1} s)^{\frac{d}{2}}}  \\
\left( [\overline{\nabla b}(x,\phi_s(y))+\overline{\nabla b}(x,\phi_s(y))^T]+\frac{\nabla_x\phi_{-s}(x)-(\nabla_x\phi_{-s}(x))^{-T}}{s} \right)\cdot \nabla\bar{\rho}_n(y)dy
\end{multline*}

For $M_1$,  by Lemma \ref{lem:flowmap}, we find that
\[
||x-\phi_{s}(y)|^2-|\phi_{-s}(x)-y|^2|\le C |x-\phi_s(y)|^2 s.
\]
Since
\begin{multline*}
\exp(-\frac{\beta|\phi_{-s}(x)-y|^2}{4s})-\exp(-\frac{\beta |x-\phi_{s}(y)|^2}{4 s} )\\
=\exp(-\frac{\beta |x-\phi_{s}(y)|^2}{4 s} ) \left[\exp(\frac{\beta[ |x-\phi_{s}(y)|^2-|\phi_{-s}(x)-y|^2]}{4s})-1 \right]\\
=\exp(-\frac{\beta|\phi_{-s}(x)-y|^2}{4s})(1-\exp(\frac{\beta[|\phi_{-s}(x)-y|^2- |x-\phi_{s}(y)|^2]}{4 s} )).
\end{multline*}
If we simply bound 
\[
\exp(\frac{\beta[ |x-\phi_{s}(y)|^2-|\phi_{-s}(x)-y|^2]}{4s})-1
\le C\exp(C|x-\phi_s(y)|^2)|x-\phi_s(y)|^2,
\]
we would have trouble to deal with the extra exponential term here. To resolve this, we note that $|e^u-1|\le |u|$ for $u<1$. Hence, we may choose the different bounds for different cases. In particular,   if $|x-\phi_{s}(y)|^2-|\phi_{-s}(x)-y|^2\le 0$, we use the bound
\[
|\exp(-\frac{\beta|\phi_{-s}(x)-y|^2}{4s})-\exp(-\frac{\beta |x-\phi_{s}(y)|^2}{4 s} )|
\le C\exp(-\frac{\beta |x-\phi_{s}(y)|^2}{4 s} ) |x-\phi_s(y)|^2
\]
Otherwise, we use the bound
\[
C\exp\left(-\frac{\beta|\phi_{-s}(x)-y|^2}{4s}\right) |x-\phi_s(y)|^2.
\]
Hence, we find 
\begin{multline*}
M_1\le C\int \frac{1}{(4\pi \beta^{-1}s)^{d/2}}\left[\exp\left(-\frac{\beta |x-\phi_{s}(y)|^2}{4 s} \right)+
\exp\left(-\frac{\beta|\phi_{-s}(x)-y|^2}{4s}\right) \right]\\
|x-\phi_s(y)|^2|\nabla \log\bar{\rho}_n(y)|\bar{\rho}_n(y)\,dy
\end{multline*}
Similar as above, we use the bound for $s\le 1$:
\[
|\nabla \log\bar{\rho}_n(y)|\le C(1+|y|^p)\le C(1+|\phi_s(y)|^p)\le C(1+|x|^p+|x-\phi_s(y)|^p).
\]

For $|x-\phi_s(y)|\le \bar{R}$, we find
\begin{multline*}
\int_{|x-\phi_s(y)|\le \bar{R}}
\frac{|x-\phi_s(y)|^2|\nabla \log\bar{\rho}_n(y)|\bar{\rho}_n(y)}{(4\pi \beta^{-1}s)^{d/2}}\left[\exp\left(-\frac{\beta |x-\phi_{s}(y)|^2}{4 s} \right)+
\exp\left(-\frac{\beta|\phi_{-s}(x)-y|^2}{4s}\right) \right]
\,dy\\ \le C\beta^{-1}s[(1+|x|^p) (1+|\log\bar{\rho}|)+(1+|\log\bar{\rho}|)^{p/2}][e^{s\cL_{\xi(1)}}e^{s\cL_{\xi(2)}}
+e^{s\cL_{\xi(2)}}e^{s\cL_{\xi(1)}}]\bar{\rho}_n\\
\le C s(1+|x|^{p'})\bar{\rho}(t, x),
\end{multline*}
where we used $[e^{s\cL_{\xi(1)}}e^{s\cL_{\xi(2)}}
+e^{s\cL_{\xi(2)}}e^{s\cL_{\xi(1)}}]\bar{\rho}_n=2\bar{\rho}$ and that $|\log \bar{\rho}|$ has polynomial growth.

For $|x-\phi_s(y)|\ge \bar{R}$, the estimate is similar to \eqref{eqaux:largedistance}
and we find
\begin{equation*}
    \begin{split}
        &\int_{|x-\phi_s(y)|\ge \bar{R}}
      \frac{|x-\phi_s(y)|^2|\nabla \log\bar{\rho}_n(y)|\bar{\rho}_n(y)}{(4\pi \beta^{-1}s)^{d/2}}\left[\exp\left(-\frac{\beta |x-\phi_{s}(y)|^2}{4 s} \right)+
\exp\left(-\frac{\beta|\phi_{-s}(x)-y|^2}{4s}\right) \right]
\,dy \\  
\le & Cs\min\{1, \bar{\rho}(t, x)\}(1+|x|^p).
    \end{split}
\end{equation*}

For $M_2$, we first note that
$\nabla_x\phi_{-s}(x)^{-T}=\nabla\phi_s(\phi_{-s}(x))^T$, and thus
\[
\frac{\nabla_x\phi_{-s}(x)-(\nabla_x\phi_{-s}(x))^{-T}}{s}
=-\nabla b(x)-\nabla b(x)^T+B(s,x) s.
\]
with $|B(s,x)|\le C(1+|x|)$. The reminder term in $M_2$ is treated similarly as before.
The main term becomes
\begin{multline*}
\tilde{M}_{2}=
\int \frac{e^{-\frac{\beta |x-\phi_{s}(y)|^2}{4 s} }}{(4 \pi \beta^{-1} s)^{\frac{d}{2}}} 
\left( [\overline{\nabla b}(x,\phi_s(y))+\overline{\nabla b}(x,\phi_s(y))^T]-\nabla b(x)-\nabla b(x)^T \right)\cdot \nabla\bar{\rho}_n(y)dy
\end{multline*}
The difference will basically give $(y-x)$ which makes the $y$ derivative of the Gaussian kernel times $\beta^{-1}s$. Integration by parts, we find that
\begin{gather*}
\tilde{M}_2
\le C(\|\Delta \nabla b\|_{\infty}+|\nabla^2b|_{\infty})s\int \frac{e^{-\frac{\beta |x-\phi_{s}(y)|^2}{4 s} }}{(4 \pi \beta^{-1} s)^{\frac{d}{2}}} (|\nabla\bar{\rho}_n(y)|+
|\nabla^2 \bar{\rho}_n(y)|)dy.
\end{gather*}
Using the polynomial growth of $|\nabla \log\bar{\rho}_n|$
and $|\nabla^2\log \bar{\rho}_n|$ and with the formula
\[
\nabla^2\bar{\rho}_n=(\nabla^2\log \bar{\rho}_n+\nabla\bar{\rho}_n^{\otimes 2})\bar{\rho}_n,
\]
one can treat this using the same approach above. Hence
\[
\tilde{M}_2 \le C s ( 1+ |x|p') \bar{\rho}(t,x).
\]

\subsection{Estimates of $E_2$ and $E_3$}

Using the formulas of $h_i$, we have
\begin{multline}
E_2(s,x)=s^{-1}\nabla J(-s, x) \int \frac{1}{(4\pi \beta^{-1}s)^{d/2}}\exp(-\frac{\beta|\phi_{-s}(x)-y|^2}{4s}) \bar{\rho}_n(y)\,dy\\
+
\int \frac{e^{-\frac{\beta |x-y|^2}{4 s} }}{(4 \pi \beta^{-1} s)^{\frac{d}{2}}}  \overline{\nabla (\nabla\cdot b)}(x,y)  \bar{\rho}_n(\phi_{-s}(y)) J(-s, y) dy,
\end{multline}
Using the equation for $J$, one has
\begin{gather}
\partial_t\nabla J(t,x)= \nabla\phi_t(x)\cdot \nabla \mathrm{div}(b) J(t,x)+\mathrm{div}(b)(\phi_t(x))\nabla J(t,x).
\end{gather}
Note that $| \nabla\phi_t(x)-I |\le C|t|$, and $\nabla J(0, x)=0$. 
Then, the estimate of $E_2$ is similar to $E_1$ but simpler as $h_1$ and $h_2$ are used instead of their gradients are used.
Here, we would use only the polynomial growth of first order derivatives of $ \log \bar{\rho}$.

The $E_3$ term is relatively easy. In fact, 
\begin{multline}
E_3(s,x)=\int \frac{e^{-\frac{\beta |x-y|^2}{4 s} }}{(4 \pi \beta^{-1} s)^{\frac{d}{2}}}  \int_0^1 \lambda [\Delta b(\lambda y+(1-\lambda)x)
-\Delta b(\lambda x+(1-\lambda)y)]d\lambda    h_1(s, y) dy
\end{multline}
The difference will gives $(y-x)$ and then we make this as the $y$ derivative of the Gaussian kernel. Integration by parts,
we need to control on $\nabla h_1(s, y)$. Hence, as long as the fourth order derivative of $b$ is bounded, this term is controlled similarly 
as $E_{12}$ above.

\bibliographystyle{plain}
\bibliography{refer}

\end{document}